\newcommand{\CC}{\mathbb{C}}
\newcommand{\PP}{\mathbb{P}}
\newcommand{\ZZ}{\mathbb{Z}}
\newcommand{\QQ}{\mathbb{Q}}
\newcommand{\calO}{\mathcal{O}}
\newcommand{\calA}{\mathcal{A}}
\newcommand{\calM}{\mathcal{M}}
\newcommand{\calV}{\mathcal{V}}
\newcommand{\calH}{\mathcal{H}}
\newcommand{\Gm}{\mathbb{G}_m}
\newcommand{\et}{\mathrm{\acute{e}t}}
\newcommand{\sm}{\mathrm{sm}}
\newcommand{\sing}{\mathrm{sing}}
\newcommand{\dual}{\mathrm{dual}}
\newcommand{\stab}{\mathrm{s}}
\newcommand{\Gal}{\mathrm{Gal}}
\DeclareMathOperator{\M}{M}
\DeclareMathOperator{\id}{id}
\DeclareMathOperator{\Hom}{Hom}
\DeclareMathOperator{\End}{End}
\DeclareMathOperator{\Aut}{Aut}
\DeclareMathOperator{\Jac}{Jac}
\DeclareMathOperator{\Pic}{Pic}
\DeclareMathOperator{\NS}{NS}
\DeclareMathOperator{\Br}{Br}
\DeclareMathOperator{\ram}{ram}
\DeclareMathOperator{\alg}{alg}
\DeclareMathOperator{\pr}{pr}
\DeclareMathOperator{\Grass}{Grass}
\DeclareMathOperator{\Quot}{Quot}
\DeclareMathOperator{\PGL}{PGL}
\DeclareMathOperator{\GL}{GL}
\DeclareMathOperator{\Mat}{Mat}
\newtheorem{thm}{Theorem}[section]
\newtheorem{lemma}[thm]{Lemma}
\newtheorem{prop}[thm]{Proposition}
\newtheorem{cor}[thm]{Corollary}
\theoremstyle{definition}
\newtheorem{defi}[thm]{Definition}
\theoremstyle{remark}
\newtheorem{rem}[thm]{Remark}
\begin{document}

\title{A birational Torelli theorem with a Brauer class}

\author{Norbert Hoffmann}
\address{Department of Mathematics and Computer Studies, Mary Immaculate College, South Circular
  Road, Limerick, V94 VN26, Ireland}
\email{norbert.hoffmann@mic.ul.ie}

\author{Fabian Reede}
\address{Leibniz Universit\"at Hannover, Institut f\"ur algebraische Geometrie, Welfengarten 1, 30167 Hannover, Germany}
\email{reede@math.uni-hannover.de}

\subjclass[2010]{Primary: 14E07, Secondary: 14F22, 14C34}

\begin{abstract}
  Let $\M_C( 2, \calO_C) \cong \PP^3$ denote the coarse moduli space of semistable vector bundles of rank $2$ with trivial
  determinant over a smooth projective curve $C$ of genus $2$ over $\CC$. Let $\beta_C$ denote the natural Brauer class
  over the stable locus. We prove that if $f^*( \beta_{C'}) = \beta_C$ for some birational map $f$ from $\M_C( 2, \calO_C)$
  to $\M_{C'}( 2, \calO_{C'})$, then the Jacobians of $C$ and of $C'$ are isomorphic as abelian varieties.
  If moreover these Jacobians do not admit real multiplication, then the curves $C$ and $C'$ are isomorphic.
  Similar statements hold for Kummer surfaces in $\PP^3$ and for quadratic line complexes.
\end{abstract}

\maketitle

\section{Introduction}

In this text, a (classical) \emph{Kummer surface} is a quartic surface $S \subset \PP^3$ over $\CC$ with
exactly $16$ nodes as singularities, the maximal possible number. Equivalently, $S$ is the quotient
of a principally polarized abelian surface $A$ over $\CC$ by its standard involution $-\id_A$.
These surfaces were first studied by Kummer in 1864, see \cite{kummer}.

We will work with Kummer surfaces $S$ obtained from quadratic line
complexes, and from moduli spaces of vector bundles of rank $2$ with trivial determinant over a curve of genus $2$.
In each of these situations, one has a natural Brauer class $\beta \in \Br( \PP^3 \setminus S)$ of order $2$,
which can be described as follows:
\begin{itemize}
 \item A \emph{quadratic line complex} $Q$ is a smooth intersection of the Grassmannian $\Grass_1( \PP^3)$
  of lines $\ell \subset \PP^3$ with another quadric in $\PP^5$. Its associated \emph{incidence correspondence}
  \begin{equation*}
  \begin{tikzcd}
    \PP^3 & I_Q \arrow{l}[swap]{\pr_p} \arrow{r}{\pr_\ell} & Q
  \end{tikzcd} 
  \end{equation*} 
  consists of all pairs $( p, \ell) \in \PP^3 \times Q$ such that the point $p$ is on the line $\ell$.
  The fibres of $\pr_p$ are conics in $\PP^2$, the locus where they are not smooth is a Kummer surface
  \begin{equation*}
    S_Q \subset \PP^3,
  \end{equation*}
  and the restricted bundle of smooth conics over $\PP^3 \setminus S_Q$ defines a Brauer class
  \begin{equation*}
    \beta_Q \in \Br( \PP^3 \setminus S_Q).
  \end{equation*}
  See Section \ref{sec:quadratic} for some references, and for more details.
 \item Let $C$ be a smooth projective curve of genus $g \geq 2$ over $\CC$. We denote by
  \begin{equation*}
    \M_C^{\stab}( r, L) \subseteq \M_C( r, L)
  \end{equation*}
  the open locus of stable vector bundles $E$ in the coarse moduli space of S-equivalence classes
  of semistable vector bundles $E$ of rank $r \geq 2$ with determinant $\Lambda^r E \cong L$ over $C$.
  The stable locus comes equipped with a natural Brauer class
  \begin{equation*}
    \beta_C \in \Br \M_C^{\stab}( r, L)
  \end{equation*}
  which can be interpreted as the obstruction against the existence of a Poincar\'{e} family (or universal family) 
  of vector bundles over $C \times \M_C^{\stab}( r, L)$, or equivalently also as the class of a corresponding
  moduli stack $\calM_C^{\stab}( r, L)$ as a gerbe with band $\Gm$ over $\M_C^{\stab}( r, L)$.
  See Section \ref{sec:moduli} for some references, and for more details.

  In the case of genus $g = 2$, rank $r = 2$ and trivial determinant $L = \calO_C$, one has
  \begin{equation*}
    \M_C( r, L) \cong \PP^3,
  \end{equation*}
  and the stable locus $\M_C^{\stab}( r, L)$ is the complement of a Kummer surface
  \begin{equation*}
    S_C \subset \PP^3,
  \end{equation*}
  which is the quotient of the Jacobian $A_C = \Jac( C)$ by its standard involution $-\id$.
  See Section \ref{sec:genus2} for some references, and for more details.
\end{itemize}

In each of these situations, the Brauer class $\beta$ can also be described solely in terms of the Kummer surface
$S \subset \PP^3$, namely as the unique class $\beta_S$ of order $2$ in $\Br( \PP^3 \setminus S)$
which is ramified along $S$ in such a way that the corresponding $2$-sheeted covering of $S$ is an abelian surface.
Restricting to the generic point of $\PP^3$, we can also view $\beta_S$ as an element of order $2$ in the Brauer group
of the rational function field $\CC( x_1, x_2, x_3)$.

The main observation of the present paper is that almost all of the geometric information can be recovered
solely from this Brauer class over the rational function field $\CC( x_1, x_2, x_3)$.
More precisely, we prove in particular the following:
\begin{thm}
  If $S, S' \subset \PP^3$ are Kummer surfaces with $f^*( \beta_{S'}) = \beta_S$ for some birational map
  \begin{equation*}
    f \colon \PP^3 \dashrightarrow \PP^3,
  \end{equation*}
  then $S \cong S'$ as algebraic surfaces.
\end{thm}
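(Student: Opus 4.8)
The plan is to recover the Kummer surface $S$, together with the abelian surface $A$ satisfying $S \cong A/\pm$, from the ramification of $\beta_S$ on a common smooth model. First I would resolve $f$: choose a smooth projective threefold $X$ together with projective birational morphisms $p \colon X \to \PP^3$ onto the source of $f$ and $q \colon X \to \PP^3$ onto the target, satisfying $q = f \circ p$ as rational maps (for instance a desingularisation of the closure of the graph of $f$ in $\PP^3 \times \PP^3$). Using $p$, $q$ and $f^*$ to identify every function field in sight with $\CC(x_1,x_2,x_3)$, the hypothesis $f^*(\beta_{S'}) = \beta_S$ becomes the statement that $p^*\beta_S$ and $q^*\beta_{S'}$ are one and the same class $\gamma \in \Br\bigl(\CC(x_1,x_2,x_3)\bigr)$.

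Next I would pin down the ramification divisor $\ram_X(\gamma) \subset X$. Since $\beta_S$ lies in $\Br(\PP^3 \setminus S)$, purity for Brauer groups of smooth varieties shows that $\gamma = p^*\beta_S$ is unramified on $X \setminus p^{-1}(S)$, so each prime component of $\ram_X(\gamma)$ either dominates $S$ — and then, $S$ being irreducible, it is the strict transform $\widetilde S$ of $S$ under $p$ — or is a $p$-exceptional divisor lying over $S$; symmetrically on the target side with $S'$ and $q$. The decisive point is a uniruledness dichotomy: every exceptional divisor of a projective birational morphism out of a smooth projective threefold is uniruled (it is ruled over the curve it contracts to, or rational over the point it contracts to), whereas $\widetilde S$ is birational to $S$, hence to the Kummer K3 surface $\mathrm{Km}(A)$, and is therefore not uniruled. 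Moreover $\widetilde S$ genuinely occurs in $\ram_X(\gamma)$: as $p$ is an isomorphism near the generic point of $\widetilde S$, the divisorial valuations attached to $\widetilde S$ and to $S$ on $\CC(x_1,x_2,x_3)$ agree, so $\partial_{\widetilde S}(\gamma) = \partial_S(\beta_S) \neq 0$, compatibly with the isomorphism $p^* \colon \CC(S) \xrightarrow{\ \sim\ } \CC(\widetilde S)$. Hence $\widetilde S$ is the \emph{unique} non-uniruled component of $\ram_X(\gamma)$; running the same argument through $q$ shows that the strict transform of $S'$ in $X$ is this very same component. So $\widetilde S$ is simultaneously the strict transform of $S$ and of $S'$, and under the resulting isomorphism $\CC(S) \cong \CC(\widetilde S) \cong \CC(S')$ the residues $\partial_S(\beta_S)$ and $\partial_{S'}(\beta_{S'})$ correspond.

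Finally I would transfer this back to the surfaces. By the characterisation of these Brauer classes recalled in the introduction, the residue $\partial_S(\beta_S) \in \CC(S)^*/(\CC(S)^*)^2$ cuts out a quadratic extension $\CC(S)\bigl(\sqrt{g}\bigr) \cong \CC(A)$, and likewise $\partial_{S'}(\beta_{S'})$ cuts out $\CC(S')\bigl(\sqrt{g'}\bigr) \cong \CC(A')$ — tracking the residue, and not merely the birational class of $S$, is essential here, since a birational equivalence $S \dashrightarrow S'$ by itself would only give $\mathrm{Km}(A) \cong \mathrm{Km}(A')$, which does not pin down $A$. Since the two residues match under $\CC(S) \cong \CC(S')$, we get $\CC(A) \cong \CC(A')$; as a complex abelian variety is determined up to isomorphism by its function field, this yields $A \cong A'$ as abelian surfaces. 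Composing with a translation we may take this isomorphism to send $0$ to $0$, whence it commutes with $-\id$ and descends to an isomorphism $S \cong A/\pm \xrightarrow{\ \sim\ } A'/\pm \cong S'$ of algebraic surfaces.

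The step I expect to be the main obstacle is the middle one: confining the ramification of $p^*\beta_S$ to $p^{-1}(S)$ and controlling the newly created exceptional components well enough to single out $\widetilde S$ canonically. This rests on the purity theorem for Brauer groups of smooth varieties, on the standard fact that exceptional divisors of birational morphisms from smooth projective varieties are uniruled, and on a careful check that the residue maps are compatible with $p$ and $q$ near the generic points of the strict transforms; the non-uniruledness input also uses in an essential way the hypothesis that $S$ and $S'$ are genuine Kummer surfaces rather than arbitrary quartics.
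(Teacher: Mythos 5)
Your proposal is correct, but its two key technical inputs differ from the paper's. The paper never resolves the graph of $f$: it first proves (Proposition~\ref{prop:ruled}, via the weak factorization theorem) that an irreducible divisor which is not birationally ruled cannot be exceptional for a birational map, applies this to $S$ (Kodaira dimension $0$), and so represents $f$ by an open immersion $f_U \colon U \to \PP^3$ with $U \cap S \neq \emptyset$; comparing ramification shows $f_U$ carries $S \cap U$ isomorphically onto $S' \cap U'$, and functoriality of the Gysin map lifts this to an isomorphism of open parts of the double covers, which extends to $g \colon A \to A'$ and descends to $S \cong S'$ --- in fact the paper obtains the stronger conclusion that $f$ is a Cremona isomorphism from $S$ to $S'$, which it needs for later corollaries. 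You instead pass to a resolution $X$ of the graph and replace weak factorization by purity together with the classical fact that exceptional divisors of a proper birational morphism onto a \emph{smooth} variety are uniruled; note that the smoothness hypothesis belongs to the target (here $\PP^3$), not to the source as your phrasing ``out of a smooth projective threefold'' suggests --- over a badly singular target a non-uniruled divisor can perfectly well be contracted (resolve a cone over a K3) --- and the parenthetical claim ``rational over the point it contracts to'' is more than you need or should assert; uniruledness suffices and is what your dichotomy uses. With that read correctly, your identification of the strict transforms of $S$ and $S'$ as the unique non-uniruled component of the ramification divisor is sound, your residue bookkeeping at the generic point is exactly the compatibility of $\partial$ with the ramification map that the paper records in a commutative diagram, and your endgame (matching residues give $\CC(A) \cong \CC(A')$ over $\CC(S) \cong \CC(S')$, birational abelian varieties are isomorphic, normalize by a translation, descend through $-\id$) is a function-field version of Lemma~\ref{lem:S_determines_A} and Remark~\ref{rem:S_determines_A}. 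What each route buys: the paper's argument yields directly that $f$ itself restricts to an isomorphism $S \to S'$, while yours relies on older, more elementary inputs than weak factorization and works uniformly at the level of valuations; it fully proves the stated $S \cong S'$, and would only need the extra observation that your isomorphism $A \to A'$ is equivariant for the covering involutions and compatible with $f_{|S}$ if you wanted the paper's sharper Cremona-isomorphism statement.
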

\begin{thm}
  If $Q, Q' \subset \Grass_1( \PP^3)$ are quadratic line complexes with $f^*( \beta_{Q'}) = \beta_Q$ for some birational map
  \begin{equation*}
    f \colon \PP^3 \dashrightarrow \PP^3,
  \end{equation*}
  then $S_Q \cong S_{Q'}$ as algebraic surfaces.
\end{thm}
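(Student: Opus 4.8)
The plan is to reduce the statement to the theorem on Kummer surfaces stated above. The one thing that needs to be checked is that the Brauer class $\beta_Q \in \Br( \PP^3 \setminus S_Q)$ of the bundle of smooth conics coincides with the intrinsic Brauer class $\beta_{S_Q}$ of the Kummer surface $S_Q$ --- the unique class of order $2$ in $\Br( \PP^3 \setminus S_Q)$ that is ramified along $S_Q$ with abelian associated double cover, as described in the introduction. Granting $\beta_Q = \beta_{S_Q}$ and likewise $\beta_{Q'} = \beta_{S_{Q'}}$, the hypothesis $f^*( \beta_{Q'}) = \beta_Q$ reads $f^*( \beta_{S_{Q'}}) = \beta_{S_Q}$ in the Brauer group of $\CC( x_1, x_2, x_3)$, and the theorem on Kummer surfaces immediately gives $S_Q \cong S_{Q'}$. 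So the whole proof amounts to the identification $\beta_Q = \beta_{S_Q}$, which I would carry out in Section \ref{sec:quadratic}; the rest is formal.

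To prove $\beta_Q = \beta_{S_Q}$, I would verify the three defining properties of $\beta_{S_Q}$ for $\beta_Q$ and then invoke uniqueness. (i) Order dividing $2$: by construction $\beta_Q$ is the class of a conic bundle, namely $\pr_p$ restricted to $\PP^3 \setminus S_Q$, and the Brauer class of a conic bundle is always $2$-torsion (one sees this from the quaternion presentation of a smooth conic over the function field, or from the even Clifford algebra). (ii) Nontriviality, i.e.\ order exactly $2$: this says the conic bundle has no rational section; it can be read off from the explicit quadratic line complex, or deduced from the nontrivial ramification in (iii). (iii) Ramification along $S_Q$ and the double cover: over a general point $p \in S_Q$ the conic $\pr_p^{-1}( p)$ degenerates to a pair of lines, and their two components define the two sheets of an \'etale double cover of a dense open subset of $S_Q$; I would identify (the normalization of) this cover with the abelian surface $A$ for which $S_Q = A / \pm \id$. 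Concretely this uses the classical dictionary: a quadratic line complex $Q$ determines a pencil of quadrics in $\PP^5$ (spanned by the Pl\"ucker quadric and the defining quadric of $Q$), whose six degenerate members give six points of $\PP^1$ and hence a smooth projective curve $C$ of genus $2$; then $S_Q$ is the Kummer surface of $A = \Jac( C)$, and the double cover cut out by the degenerate conics is the natural one. The relevant references are collected in Section \ref{sec:quadratic}.

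The main obstacle is step (iii): identifying the double cover of $S_Q$ coming from the degenerate conics of $\pr_p$ with the canonical $2$-sheeted cover associated to $A$, and in particular checking that it is connected, so that $\beta_Q$ is genuinely ramified along all of $S_Q$. Once this is in place, the uniqueness of $\beta_{S_Q}$ forces $\beta_Q = \beta_{S_Q}$, and the reduction of the first paragraph completes the proof. I note that the analogous statements for moduli of bundles and for Kummer surfaces themselves follow by the same scheme, using the descriptions of $\beta_C$ and of $\beta_S$ recalled in Section \ref{sec:genus2} and in the introduction; in the moduli case this identification, combined with the Torelli theorem for genus $2$ and the recovery of $A_C = \Jac( C)$ from its Kummer surface, is what yields the conclusions of the abstract.
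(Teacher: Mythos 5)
Your proposal is correct and follows essentially the same route as the paper: Theorem \ref{thm:SQ_Cremona-iso} is deduced from Theorem \ref{thm:main} precisely by establishing $\beta_Q = \beta_{S_Q}$ (Proposition \ref{prop:beta_Q=beta_SQ}). The paper carries out your step (iii) by quoting Nitsure's theorem, which says the Gysin map sends the conic-bundle class $\beta_Q$ to the class of the double cover by components of degenerate fibres, identifies that cover with the abelian variety of lines $A_Q \twoheadrightarrow S_Q$ via Griffiths--Harris, and then uses injectivity of the Gysin map (Proposition \ref{prop:gysin_iso}) in place of your separate checks (i)--(ii) and the uniqueness argument.
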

\begin{thm}
  If $C, C'$ are curves of genus $2$ with $f^*( \beta_{C'}) = \beta_C$ for some birational map
  \begin{equation*}
    f \colon \M_C( 2, \calO_C) \dashrightarrow \M_{C'}( 2, \calO_{C'}),
  \end{equation*}
  then $\Jac( C) \cong \Jac( C')$ as abelian varieties.
\end{thm}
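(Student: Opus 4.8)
The plan is to reduce this statement to the case of Kummer surfaces treated in the first theorem above. By the results recalled in Section~\ref{sec:genus2}, the Narasimhan--Ramanan description identifies $\M_C(2,\calO_C)$ with $\PP^3$ in such a way that the stable locus $\M_C^{\stab}(2,\calO_C)$ becomes the complement $\PP^3 \setminus S_C$ of the Kummer surface $S_C = \Jac(C)/\{\pm\id\}$, and under this identification the moduli-theoretic class $\beta_C$ agrees with the geometric class $\beta_{S_C}$; likewise for $C'$. Thus $f$ becomes a birational self-map of $\PP^3$ with $f^*(\beta_{S_{C'}}) = \beta_{S_C}$, and the Kummer-surface theorem above produces an isomorphism of algebraic surfaces $S_C \cong S_{C'}$.

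It then remains to recover the abelian variety from the singular Kummer surface. An isomorphism $S_C \xrightarrow{\sim} S_{C'}$ of the nodal quartics carries the $16$ nodes of $S_C$ bijectively onto those of $S_{C'}$, hence lifts to an isomorphism of minimal resolutions $\widetilde{S}_C \xrightarrow{\sim} \widetilde{S}_{C'}$ matching the two configurations of $16$ exceptional $(-2)$-curves. On a Kummer K3 surface $\widetilde{S}$ the sum $D$ of these $16$ curves is divisible by $2$ in the torsion-free group $\NS(\widetilde{S})$; as the square root is then unique, the double cover of $\widetilde{S}$ branched along $D$ is canonically attached to the pair $(\widetilde{S},D)$, and it is nothing but the blow-up of the abelian surface $A$ at its $16$ two-torsion points, the $16$ exceptional $(-1)$-curves mapping isomorphically onto the $16$ $(-2)$-curves. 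Hence the isomorphism $\widetilde{S}_C \cong \widetilde{S}_{C'}$ lifts to these blow-ups, respecting the exceptional curves, and contracting them yields an isomorphism of algebraic surfaces $\Jac(C) \cong \Jac(C')$. By rigidity of abelian varieties this is a translation composed with a group isomorphism, so $\Jac(C) \cong \Jac(C')$ as abelian varieties.

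The substantial input is the Kummer-surface theorem itself, whose proof must extract the surface -- indeed the abelian surface up to birational equivalence -- from a Brauer class of the function field $\CC(x_1,x_2,x_3)$; I expect its main obstacle to be showing that the ramification divisor whose residue double cover is an abelian surface is a birational invariant, i.e. that $f$ cannot contract $S_C$ onto a curve or a point. Granting that, the only extra subtlety in the present statement is the final reconstruction step, where it is essential to keep track of the $16$ nodes: a Kummer K3 surface by itself may carry several inequivalent Kummer structures and so bound several non-isomorphic abelian surfaces, so one really must argue with the singular quartics. Note also that the argument gives $\Jac(C) \cong \Jac(C')$ only as abelian varieties, not as principally polarized ones, which is precisely why deducing $C \cong C'$ (as in the abstract) needs the additional hypothesis excluding real multiplication.
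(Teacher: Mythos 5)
Your overall reduction is the same as the paper's: transport everything to $\PP^3$ and apply the Kummer-surface theorem (Theorem \ref{thm:main}). However, your opening assertion that ``under this identification the moduli-theoretic class $\beta_C$ agrees with the geometric class $\beta_{S_C}$'' is not a fact the paper merely recalls; it is one of the things the paper must prove, and your proposal leaves it unsupported. A priori $\beta_C$ is just some class in $\Br \M_C^{\stab}( 2, \calO_C) \cong (\ZZ/2)^5$ (Corollary \ref{cor:Br}); being nonzero, of order $2$ and ramified only along $S_C$ does not single it out, because the other nonzero classes ramify to double covers of $S_C$ by abelian surfaces that are only isogenous to $\Jac( C)$. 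The paper pins $\beta_C$ down by passing through the quadratic line complex: by Theorem \ref{thm:odd} one has $\M_C( 2, \calO_C( -x)) \cong Q_{C,x}$, the incidence correspondence is a Hecke correspondence whose conic bundle over $\M_C^{\stab}( 2, \calO_C)$ is $[E] \mapsto \PP E_x$, a Morita equivalence via the bimodule $E_x^{\dual} \otimes H^0( C, E( m))$ gives $\beta_C = \beta_{Q_{C,x}}$ (Proposition \ref{prop:beta_QC=beta_C}), and Nitsure's computation of the ramification of a conic-bundle class together with the injectivity of the Gysin map gives $\beta_{Q_{C,x}} = \beta_{S_C}$ (Proposition \ref{prop:beta_Q=beta_SQ}). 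Some argument of this kind, or a precise reference, is needed at that point of your proof.

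For the remaining step, from $S_C \cong S_{C'}$ to $\Jac( C) \cong \Jac( C')$, your route is genuinely different from the paper's and is correct: you pass to minimal resolutions, use the $2$-divisibility of the sum of the sixteen $(-2)$-curves and the torsion-freeness of $\NS$ to obtain a canonical double cover, identify it with the blow-up of the abelian surface at its two-torsion points, and conclude by rigidity; your warning that one must keep track of the sixteen nodes (the Kummer structure) is exactly the right caution. The paper instead lifts an isomorphism $S \to S'$ directly to the coverings, using that $\alpha_S$ corresponds to the unique nontrivial homomorphism $\pi_1( S^{\sm}) = \Lambda \rtimes \ZZ/2 \to \QQ/\ZZ$ vanishing on elements of infinite order (Lemma \ref{lem:S_determines_A} and Remark \ref{rem:S_determines_A}); that argument is more elementary, needing no lattice theory, but both are sound. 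You also correctly anticipate the main obstacle inside Theorem \ref{thm:main} (that $f$ cannot contract $S_C$), which the paper handles via weak factorization and the fact that a Kummer surface is not birationally ruled, and you rightly note that the conclusion is only an isomorphism of unpolarized abelian varieties, which is why excluding real multiplication is needed to recover $C \cong C'$.
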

These statements are contained in Theorem \ref{thm:main}, Theorem \ref{thm:SQ_Cremona-iso} and
Theorem \ref{thm:SC_Cremona-iso} below, respectively. If the given datum is very general, in the sense that
the relevant abelian surface does not admit real multiplication, then we obtain the stronger conclusion
that the Kummer surfaces are projectively equivalent or that the curves are isomorphic;
see Corollary \ref{cor:A_no_real_mult} and Corollary \ref{cor:AC_no_real_mult}.
Without this extra assumption on the relevant abelian surface, there is at most finite ambiguity here;
see Corollary \ref{cor:finitely_many_S} and Corollary \ref{cor:finitely_many_C}.

\section{Kummer surfaces and an associated Brauer class}
We work over $\CC$. By a \emph{Kummer surface}, we mean a quartic surface
\begin{equation*}
  S \subset \PP^3
\end{equation*}
whose singular locus $S^{\sing} = S \setminus S^{\sm}$ consists of $16$ ordinary double points.

We will work with Kummer surfaces up to projective equivalence. 
Two hypersurfaces $S \subset \PP^n$ and $S' \subset \PP^n$ are called \emph{projectively equivalent}
if $f( S) = S'$ for some isomorphism
\begin{equation*}
  f \colon \PP^n \to \PP^n.
\end{equation*}

Each Kummer surface $S$ is known to be isomorphic to the quotient of an abelian surface $A$ by its standard
involution $-\id_A$ \cite[\S8]{hudson}. The resulting $2$-sheeted covering
\begin{equation*}
  \pi \colon A \twoheadrightarrow S
\end{equation*}
is branched over the $16$ points in $S^{\sing}$, whose inverse image consists of the $16$ points in the $2$-torsion
$A[2]$. The complex abelian surface $A$ is a quotient
\begin{equation*}
  A = V/\Lambda
\end{equation*}
of a complex vector space $V \cong \CC^2$ modulo a lattice $\Lambda \subset V$ with $\Lambda \cong \ZZ^4$.

The homology of Kummer surfaces is calculated in \cite{spanier}. For the convenience of the reader, 
we recall here what we will need in the sequel.
\begin{lemma}
  The fundamental group $\pi_1( S^{\sm})$ is a semidirect product $\Lambda \rtimes \ZZ/2$ 
  in which the nontrivial element of $\ZZ/2$ acts as $-\id$ on the normal subgroup $\Lambda$.
\end{lemma}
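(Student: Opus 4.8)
The plan is to read off $\pi_1( S^{\sm})$ from the covering $\pi$ restricted to the smooth locus, and then to split the resulting extension and identify the action. First I would restrict $\pi\colon A \twoheadrightarrow S$: the involution $-\id_A$ has fixed locus exactly $A[2]$, and $\pi$ is a local biholomorphism elsewhere, so $\pi^{-1}( S^{\sing}) = A[2]$ and $-\id_A$ acts freely on $A \setminus A[2]$ with quotient $S^{\sm}$. Hence
\[
  \pi\colon A \setminus A[2] \twoheadrightarrow S^{\sm}
\]
is a connected topological double covering, necessarily regular, with deck group $\ZZ/2$ generated by $-\id_A$, and covering theory supplies the short exact sequence
\[
  1 \longrightarrow \pi_1( A \setminus A[2]) \longrightarrow \pi_1( S^{\sm}) \longrightarrow \ZZ/2 \longrightarrow 1.
\]
Since a punctured ball around a point of the real $4$-manifold $A$ is simply connected, van Kampen gives $\pi_1( A \setminus A[2]) \cong \pi_1( A) \cong \Lambda$, so this reads $1 \to \Lambda \to \pi_1( S^{\sm}) \to \ZZ/2 \to 1$.

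To pin down the action, I would use that the conjugation action of $\pi_1( S^{\sm})$ on the normal subgroup $\Lambda$ --- which factors through $\ZZ/2$ because $\Lambda$ is abelian --- coincides with the action of the deck group on $\pi_1$ of the covering space. The nontrivial deck transformation is $-\id_A$, and it acts on $\pi_1( A) = H_1( A; \ZZ) = \Lambda$ by $-\id$, since $-\id_A$ is multiplication by $-1$ in the group $A$. So $\ZZ/2$ acts as $-\id$ on $\Lambda$.

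It remains to split the extension, and for this I would work locally at one node $s_0 \in S^{\sing}$. There $S$ has an ordinary double point, so a deleted neighbourhood of $s_0$ in $S^{\sm}$ is homeomorphic to $(\CC^2 \setminus \{0\})/(-\id) \simeq \mathbb{RP}^3$, over which $\pi$ restricts to the double cover $\CC^2 \setminus \{0\} \simeq S^3 \to \mathbb{RP}^3$, which is the universal cover. Hence the inclusion sends the generator of $\pi_1( \mathbb{RP}^3) = \ZZ/2$ to an element $\sigma \in \pi_1( S^{\sm})$ whose image in $\ZZ/2$ is nontrivial, while $\sigma^2 = 1$ because that generator already squares to the identity in $\pi_1( \mathbb{RP}^3)$. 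Thus $n \mapsto \sigma^n$ splits the sequence and exhibits $\pi_1( S^{\sm}) \cong \Lambda \rtimes \ZZ/2$ with the nontrivial element acting by $-\id$, as claimed. The steps that call for genuine care are the basepoint bookkeeping behind identifying the conjugation action with the deck-transformation action, and checking that the local $A_1$-model at a node indeed furnishes a square-zero lift of the generator of $\ZZ/2$; everything else is routine.
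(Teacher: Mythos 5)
Your proof is correct, but it takes a genuinely different route from the paper. The paper goes straight to the universal cover: writing $A = V/\Lambda$, it observes that $V^{\circ}$ (the complement of $\tfrac{1}{2}\Lambda$ in $V$) is simply connected because the removed set has real codimension $4$, so $V^{\circ} \twoheadrightarrow S^{\sm}$ is the universal covering and $\pi_1( S^{\sm})$ is literally its deck group, namely the affine transformations $v \mapsto \pm v + \lambda$ with $\lambda \in \Lambda$; the semidirect product structure and the $-\id$ action are then visible with no further work. You instead stop at the intermediate double cover $A \setminus A[2] \to S^{\sm}$, extract the extension $1 \to \Lambda \to \pi_1( S^{\sm}) \to \ZZ/2 \to 1$, identify the $\ZZ/2$-action with the deck action of $-\id_A$ on $H_1( A, \ZZ) = \Lambda$ (legitimate since $\Lambda$ is abelian, so the basepoint ambiguity is harmless), and then split the extension by hand via the local $A_1$-model at a node, where the cover restricts to $S^3 \to \mathbb{RP}^3$ and the order-$2$ generator of $\pi_1( \mathbb{RP}^3)$ maps to an involution $\sigma$ covering the nontrivial deck element. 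All of these steps are sound: the double cover is regular, removing points from a $4$-manifold does not change $\pi_1$, and an order-$2$ element surjecting onto $\ZZ/2$ does split the sequence. What the paper's approach buys is economy --- the splitting comes for free from the explicit deck group, with no local analysis needed; what your approach buys is a concrete geometric picture of the torsion elements of $\pi_1( S^{\sm})$ as small loops around the nodes, and it only needs $\pi_1( A^{\circ}) = \Lambda$ rather than simple connectivity of $V^{\circ}$.
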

\begin{proof}
  Let $V^{\circ} \subset V$ and $A^{\circ} \subset A$ denote the inverse images of $S^{\sm} \subset S$.
  Their complements
  \begin{equation*}
    A \setminus A^{\circ} = A[2] \qquad\text{and}\qquad V \setminus V^{\circ} = \frac{1}{2} \Lambda
  \end{equation*}
  have real codimension $4$, so $\pi_1( A^{\circ}) = \Lambda$, and $V^{\circ}$ is simply connected. Therefore,
  \begin{equation*}
    V^{\circ} \twoheadrightarrow S^{\sm}
  \end{equation*}
  is a universal covering of $S^{\sm}$, so $\pi_1( S^{\sm})$ is the group of deck transformations
  $V^{\circ} \to V^{\circ}$. These consist of the translations $v \mapsto v + \lambda$
  and point reflections $v \mapsto \lambda - v$ with $\lambda \in \Lambda$.   
\end{proof}

\begin{cor}
  The singular homology $H_1( S^{\sm}, \ZZ)$ is isomorphic to $(\ZZ/2)^5$.
\end{cor}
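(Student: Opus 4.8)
The plan is to identify $H_1( S^{\sm}, \ZZ)$ with the abelianization of the fundamental group computed in the preceding lemma, namely $G := \pi_1( S^{\sm}) \cong \Lambda \rtimes \ZZ/2$, where the generator $\sigma$ of $\ZZ/2$ acts as $-\id$ on $\Lambda \cong \ZZ^4$. So the task reduces to a purely group-theoretic computation of $G/[G,G]$.

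First I would pin down the commutator subgroup $[G,G]$. Since both $\Lambda$ and $\ZZ/2$ are abelian, every commutator in $G$ is a product of conjugates of a ``mixed'' one, and writing $\Lambda$ additively one computes $\sigma \lambda \sigma^{-1} \lambda^{-1} = -2\lambda$ for $\lambda \in \Lambda$. Hence $[G,G]$ contains the subgroup $2\Lambda \subseteq \Lambda \subseteq G$. Conversely, $2\Lambda$ is characteristic in $\Lambda$, hence normal in $G$, and the quotient $G/2\Lambda$ is an extension of $\ZZ/2$ by $\Lambda/2\Lambda$ on which $\sigma$ now acts trivially (as $-\id \equiv \id$ modulo $2$); such an extension is abelian, so $[G,G] \subseteq 2\Lambda$. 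Therefore $[G,G] = 2\Lambda$.

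It then remains to observe that $G/2\Lambda$, being abelian with $\sigma$ acting trivially on $\Lambda/2\Lambda$, is the direct product $(\Lambda/2\Lambda) \times \ZZ/2 \cong (\ZZ/2)^4 \times \ZZ/2 \cong (\ZZ/2)^5$, which gives the claim. I do not anticipate any real obstacle here; the only point that needs a little care is making sure the listed mixed commutators, together with their products and conjugates, genuinely exhaust $[G,G]$, and the clean way to see this is the normality-of-$2\Lambda$ argument above rather than a direct enumeration of commutators.
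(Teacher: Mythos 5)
Your proposal is correct and follows essentially the same route as the paper: identify $H_1(S^{\sm},\ZZ)$ with the abelianization of $\pi_1(S^{\sm}) \cong \Lambda \rtimes \ZZ/2$ via Hurewicz, show the quotient by $2\Lambda$ is abelian since $-\id$ acts trivially on $\Lambda/2\Lambda$, and show $2\Lambda$ consists of commutators (the paper notes each $v \mapsto v+2\lambda$ is a commutator, which is exactly your computation $\sigma\lambda\sigma^{-1}\lambda^{-1} = -2\lambda$). No gaps.
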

\begin{proof}
  By the Hurewicz theorem, $H_1( S^{\sm}, \ZZ)$ is the maximal abelian quotient of $\pi_1( S^{\sm})$.
  The subgroup $2 \Lambda \subset \pi_1( S^{\sm})$ is normal, and the quotient $\Lambda/2\Lambda \rtimes \ZZ/2$
  is abelian because the induced action of $\ZZ/2$ on $\Lambda/2\Lambda$ is trivial. This abelian quotient
  \begin{equation*}
    \pi_1( S^{\sm}) \twoheadrightarrow \Lambda/2\Lambda \oplus \ZZ/2
  \end{equation*}
  is maximal, since each element $v \mapsto v + 2 \lambda$ of $2 \Lambda$ is a commutator in $\pi_1( S^{\sm})$.
\end{proof}

\begin{cor}
  The singular cohomology $H^1( S^{\sm}, \QQ/\ZZ)$ is isomorphic to $(\ZZ/2)^5$.
\end{cor}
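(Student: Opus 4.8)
The plan is to deduce this immediately from the previous corollary by means of the universal coefficient theorem. First I would write down the short exact sequence
\begin{equation*}
  0 \to \operatorname{Ext}^1_{\ZZ}\bigl( H_0( S^{\sm}, \ZZ), \QQ/\ZZ\bigr) \to H^1( S^{\sm}, \QQ/\ZZ) \to \Hom\bigl( H_1( S^{\sm}, \ZZ), \QQ/\ZZ\bigr) \to 0.
\end{equation*}
Since $\QQ/\ZZ$ is a divisible abelian group, it is injective as a $\ZZ$-module, so the $\operatorname{Ext}^1$ term on the left vanishes (alternatively, $S^{\sm}$ is connected, so $H_0( S^{\sm}, \ZZ) \cong \ZZ$ is free). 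Hence the right-hand map is an isomorphism.

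It then remains to compute the group on the right. By the previous corollary, $H_1( S^{\sm}, \ZZ) \cong (\ZZ/2)^5$, and $\Hom( \ZZ/2, \QQ/\ZZ) \cong \ZZ/2$ because $\QQ/\ZZ$ contains a unique element of order $2$. Therefore $H^1( S^{\sm}, \QQ/\ZZ) \cong \Hom\bigl( (\ZZ/2)^5, \QQ/\ZZ\bigr) \cong (\ZZ/2)^5$, as claimed.

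A slicker variant avoids homological algebra altogether: since $\QQ/\ZZ$ is an abelian coefficient group, $H^1( S^{\sm}, \QQ/\ZZ)$ is canonically identified with $\Hom\bigl( \pi_1( S^{\sm}), \QQ/\ZZ\bigr)$, and every such homomorphism factors through the abelianization, which is $H_1( S^{\sm}, \ZZ) \cong (\ZZ/2)^5$ by the previous corollary (or directly by the Hurewicz theorem applied to the lemma on $\pi_1( S^{\sm})$). Either way, there is no genuine obstacle here; the only point worth a second glance is the vanishing of the $\operatorname{Ext}^1$ term, which is where connectedness of $S^{\sm}$ (equivalently, injectivity of $\QQ/\ZZ$) enters.
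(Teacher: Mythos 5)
Your proposal is correct and follows essentially the same route as the paper: apply the universal coefficient theorem (with the $\operatorname{Ext}^1$ term vanishing since $S^{\sm}$ is connected, equivalently since $\QQ/\ZZ$ is injective) to identify $H^1( S^{\sm}, \QQ/\ZZ)$ with $\Hom( H_1( S^{\sm}, \ZZ), \QQ/\ZZ)$, and then use the previous corollary $H_1( S^{\sm}, \ZZ) \cong (\ZZ/2)^5$. Your version just spells out the details the paper leaves implicit.
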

\begin{proof}
  Since $S^{\sm}$ is arcwise connected, the universal coefficient theorem implies
  \begin{equation*}
    H^1( S^{\sm}, \QQ/\ZZ) \cong \Hom( H_1( S^{\sm}, \ZZ), \QQ/\ZZ ). \qedhere
  \end{equation*}
\end{proof}

\begin{cor}
  The \'{e}tale cohomology $H^1_{\et}( S^{\sm}, \QQ/\ZZ)$ is isomorphic to $(\ZZ/2)^5$.  
\end{cor}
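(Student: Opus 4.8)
The plan is to transport the preceding corollary across the comparison between étale and classical cohomology. The smooth locus $S^{\sm}$ is a smooth quasi-projective variety over $\CC$; write $S^{\sm}(\CC)$ for its set of complex points with the analytic topology, which is the topological space underlying the singular cohomology groups of the preceding corollaries. By Artin's comparison theorem, for every finite abelian group $M$ there is a canonical isomorphism
\[
  H^i_{\et}(S^{\sm}, M) \xrightarrow{\ \sim\ } H^i(S^{\sm}(\CC), M)
\]
between the étale cohomology of $S^{\sm}$ with constant coefficients $M$ and the singular cohomology of $S^{\sm}(\CC)$; over the algebraically closed field $\CC$ there is no Galois action to keep track of.

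To upgrade this from finite coefficients to $\QQ/\ZZ$, I would write $\QQ/\ZZ = \varinjlim_n \tfrac{1}{n}\ZZ/\ZZ$ as the filtered colimit of its finite cyclic subgroups and pass to the colimit over $n$. On the étale side, $S^{\sm}$ is a noetherian (in particular quasi-compact and quasi-separated) scheme, so $H^1_{\et}(S^{\sm}, -)$ commutes with filtered colimits of abelian sheaves, and a filtered colimit of constant sheaves is constant; hence $H^1_{\et}(S^{\sm}, \QQ/\ZZ) \cong \varinjlim_n H^1_{\et}(S^{\sm}, \tfrac{1}{n}\ZZ/\ZZ)$. On the singular side, $S^{\sm}(\CC)$ is connected (as already used above), so $H_0 = \ZZ$ and the universal coefficient theorem gives $H^1(S^{\sm}(\CC), M) \cong \Hom(H_1(S^{\sm}(\CC), \ZZ), M)$ naturally in $M$; by the first corollary above $H_1(S^{\sm}(\CC), \ZZ) \cong (\ZZ/2)^5$ is finite, hence finitely presented, so the functor $M \mapsto H^1(S^{\sm}(\CC), M)$ also commutes with the colimit. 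Chaining the three identifications gives
\[
  H^1_{\et}(S^{\sm}, \QQ/\ZZ) \cong H^1(S^{\sm}(\CC), \QQ/\ZZ) \cong (\ZZ/2)^5,
\]
the last isomorphism being exactly the preceding corollary.

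I do not anticipate any genuine obstacle. The only point of care is that Artin's comparison theorem is naturally stated for finite (constructible) coefficient sheaves, which is the reason for the filtered-colimit reduction; alternatively one may invoke the torsion-sheaf form of the theorem directly and omit that step, or argue instead via the identification of $\pi_1^{\et}(S^{\sm})$ with the profinite completion of $\pi_1(S^{\sm})$ together with the lemma above. It is worth recording the argument in a form valid, with the identical étale cohomology group, for the smooth locus of an arbitrary Kummer surface, since this is the form that will be used in the later sections.
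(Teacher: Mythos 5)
Your proposal is correct and follows essentially the same route as the paper: invoking Artin's comparison theorem for finite constant coefficients $\ZZ/n$ and identifying the result with the singular cohomology computed in the preceding corollary. The only difference is that you spell out the passage from $\ZZ/n$ to $\QQ/\ZZ$ via filtered colimits, a step the paper leaves implicit.
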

\begin{proof}
  The comparison theorem \cite[Expos\'{e} XI, Th\'{e}or\`{e}me 4.4]{sga4} states that
  \begin{equation*}
    H^1_{\et}( S^{\sm}, \ZZ/n) \cong H^1( S^{\sm}, \ZZ/n). \qedhere
  \end{equation*}
\end{proof}

More precisely, the three theorems just quoted provide a \emph{canonical} isomorphism
\begin{equation} \label{eq:H1_to_pi_1}
  H^1_{\et}( S^{\sm}, \QQ/\ZZ) \longrightarrow \Hom( \pi_1( S^{\sm}), \QQ/\ZZ).
\end{equation}
There is a only one nontrivial homomorphism from $\pi_1( S^{\sm})$ to $\QQ/\ZZ$ that vanishes on all
elements of infinite order, namely the natural composition
\begin{equation} \label{eq:pi1_to_Z/2}
  \pi_1( S^{\sm}) = \Lambda \rtimes \ZZ/2 \twoheadrightarrow \ZZ/2 \hookrightarrow \QQ/\ZZ.
\end{equation}
\begin{defi} \label{def:alpha_S}
  Let $S \subset \PP^3$ be a Kummer surface. Then we denote by
  \begin{equation*}
    \alpha_S \in H^1_{\et}( S^{\sm}, \QQ/\ZZ)
  \end{equation*}
  the cohomology class corresponding, under the isomorphism \eqref{eq:H1_to_pi_1},
  to the composition \eqref{eq:pi1_to_Z/2}.
\end{defi}
By construction, this natural cohomology class $\alpha_S$ has order $2$, and corresponds to the \'{e}tale
restriction $A^{\circ} \twoheadrightarrow S^{\sm}$ of the $2$-sheeted covering $\pi \colon A \twoheadrightarrow S$.

\begin{rem}
  Every other nonzero class in $H^1_{\et}( S^{\sm}, \QQ/\ZZ)$ has order $2$ as well, and corresponds to an
  \'{e}tale restriction of a $2$-sheeted covering $S' \twoheadrightarrow S$ by a Kummer surface
  \begin{equation*}
    S' = A'/-\id_{A'}
  \end{equation*}
  where $A'$ is an abelian surface that admits an isogeny $A' \twoheadrightarrow A$ of degree $2$.

  This follows from the observation that the kernel of every other nontrivial homomorphism from
  $\pi_1( S^{\sm})$ to $\QQ/\ZZ$ intersects $\Lambda$ in a subgroup $\Lambda' \subset \Lambda$ of index $2$.
\end{rem}

Let $S' \subset \PP^3$ be another Kummer surface. Then $S' = A'/-\id_{A'}$ with $A'$ abelian.
\begin{lemma} \label{lem:S_determines_A}
  Each isomorphism $f \colon S \to S'$ lifts to an isomorphism of surfaces $g \colon A \to A'$.
\end{lemma}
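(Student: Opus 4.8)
The plan is to restrict $f$ to the smooth loci, observe that it automatically matches up the canonical étale double covers $A^{\circ} \twoheadrightarrow S^{\sm}$ and $A'^{\circ} \twoheadrightarrow S'^{\sm}$, and then extend the resulting isomorphism of open surfaces across the finitely many missing points using that $A$ and $A'$ are abelian.

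First, since $f$ is an isomorphism it carries the $16$ nodes $S^{\sing}$ onto $S'^{\sing}$, hence restricts to an isomorphism $f^{\sm} \colon S^{\sm} \to S'^{\sm}$. I claim that the induced isomorphism $(f^{\sm})^* \colon H^1_{\et}( S'^{\sm}, \QQ/\ZZ) \to H^1_{\et}( S^{\sm}, \QQ/\ZZ)$ sends $\alpha_{S'}$ to $\alpha_S$. Under the canonical identification \eqref{eq:H1_to_pi_1}, pullback along $f^{\sm}$ corresponds to precomposition with the induced group isomorphism $\pi_1( S^{\sm}) \to \pi_1( S'^{\sm})$. Now $\alpha_{S'}$ corresponds to the \emph{unique} nontrivial homomorphism $\pi_1( S'^{\sm}) \to \QQ/\ZZ$ vanishing on all elements of infinite order, and a group isomorphism carries elements of infinite order to elements of infinite order, so precomposition produces exactly the analogous distinguished homomorphism for $\pi_1( S^{\sm})$, i.e. $\alpha_S$. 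This is the one place where something genuinely has to be checked, and it rests entirely on the intrinsic $\pi_1$-theoretic characterisation of $\alpha_S$ recorded before Definition \ref{def:alpha_S}; everything else is formal.

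Next, recall that $\alpha_S$ classifies the étale double cover $A^{\circ} \twoheadrightarrow S^{\sm}$ obtained by restricting $\pi \colon A \to S$, and likewise $\alpha_{S'}$ classifies $A'^{\circ} \twoheadrightarrow S'^{\sm}$. Finite étale covers of the normal variety $S^{\sm}$ are determined up to isomorphism by their class in $H^1_{\et}( S^{\sm}, \QQ/\ZZ)$. Hence the pullback of $A'^{\circ} \twoheadrightarrow S'^{\sm}$ along the algebraic isomorphism $f^{\sm}$, being an étale double cover of $S^{\sm}$ with class $(f^{\sm})^* \alpha_{S'} = \alpha_S$, is isomorphic over $S^{\sm}$ to $A^{\circ} \twoheadrightarrow S^{\sm}$. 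This yields an isomorphism of varieties $g^{\circ} \colon A^{\circ} \to A'^{\circ}$ compatible with $f^{\sm}$ on the base (unique up to composing with the deck involution $-\id_{A^{\circ}}$).

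Finally I would extend $g^{\circ}$ across the missing points. The complements $A \setminus A^{\circ} = A[2]$ and $A' \setminus A'^{\circ} = A'[2]$ are $16$ points each, of codimension $2$ in the smooth projective surfaces $A$ and $A'$. Viewing $g^{\circ}$ as a rational map $A \dashrightarrow A'$ into an abelian surface, which contains no rational curves, it has empty indeterminacy locus (any indeterminacy point would, after resolution by blow-ups, produce a rational curve mapping non-constantly to $A'$), so it extends to a morphism $g \colon A \to A'$; the same applies to $(g^{\circ})^{-1}$. The two composites restrict to the identity on the dense opens $A^{\circ}$, $A'^{\circ}$, hence are the identities, so $g$ is an isomorphism of surfaces lifting $f$. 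The only real obstacle is the step $(f^{\sm})^* \alpha_{S'} = \alpha_S$ above; once that is in hand, the lift and its extension are routine.
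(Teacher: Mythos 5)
Your proof is correct and follows essentially the same route as the paper: the decisive step is $(f^{\sm})^*\alpha_{S'}=\alpha_S$, justified by the intrinsic characterisation of $\alpha_S$ (the unique nontrivial homomorphism $\pi_1 \to \QQ/\ZZ$ killing elements of infinite order), after which one lifts $f$ to the classified \'etale double covers. The only divergence is the final extension of $g^{\circ}$ across the $16$ missing points: the paper obtains it by taking integral closures of $S$ and $S'$ in $\CC(A^{\circ})$ and $\CC(A'^{\circ})$, while you use that a rational map from a smooth surface to an abelian surface has empty indeterminacy locus; both arguments are valid.
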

\begin{proof}
  The uniqueness in the construction of the class $\alpha_S$ implies that 
  \begin{equation*}
    f^* \colon H^1_{\et}( {S'}^{\sm}, \QQ/\ZZ) \longrightarrow H^1_{\et}( S^{\sm}, \QQ/\ZZ)
  \end{equation*}
  maps $\alpha_{S'}$ to $\alpha_S$. Therefore, $f$ can be lifted to an isomorphism
  \begin{equation*}
    g^{\circ} \colon A^{\circ} \to {A'}^{\circ}.
  \end{equation*}
  This isomorphism automatically extends to an isomorphism $g$ from the integral closure $A$ of $S$
  in the function field $\CC( A^{\circ})$ to the integral closure $A'$ of $S'$ in $\CC( {A'}^{\circ})$.
\end{proof}
\begin{rem} \label{rem:S_determines_A}
  This lift $g \colon A \to A'$ of $f \colon S \to S'$ is in general not a group homomorphism,
  but the composition of a group homomorphism and the translation by a $2$-torsion point.
  In particular, $A$ and $A'$ are isomorphic as abelian surfaces.
\end{rem}

The projective embedding $S \hookrightarrow \PP^3$ determines a principal polarization $\theta \in \NS( A)$
in such a way that $2 \theta$ is the class of $\pi^* \calO_S( 1)$. Let $\theta' \in \NS( A')$ denote
the principal polarization determined in the same way by the projective embedding $S' \hookrightarrow \PP^3$.

\begin{cor}
  If the Kummer surfaces $S \subset \PP^3$ and $S' \subset \PP^3$ are projectively equivalent,
  then $(A, \theta)$ and $(A', \theta')$ are isomorphic as principally polarized abelian surfaces.
\end{cor}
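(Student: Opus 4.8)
The plan is to combine Lemma \ref{lem:S_determines_A} and Remark \ref{rem:S_determines_A} with the elementary fact that translations act trivially on the Néron–Severi group. Suppose $S$ and $S'$ are projectively equivalent, so there is an isomorphism $f \colon \PP^3 \to \PP^3$ with $f(S) = S'$. First I would restrict $f$ to an isomorphism $f|_S \colon S \to S'$ and apply Lemma \ref{lem:S_determines_A} to obtain a lift $g \colon A \to A'$, that is, an isomorphism of surfaces with $\pi' \circ g = f \circ \pi$; this commuting square holds on the dense open subsets $A^{\circ}, {A'}^{\circ}$ by the construction of $g$ as the extension of $g^{\circ}$, and hence everywhere by continuity. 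By Remark \ref{rem:S_determines_A}, $g = \tau_t \circ h$ for some group isomorphism $h \colon A \to A'$ and some $t \in A'[2]$, so in particular $A \cong A'$ as abelian surfaces.

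Next I would track the polarization. Since $f$ is an automorphism of $\PP^3$, it satisfies $f^* \calO_{\PP^3}( 1) \cong \calO_{\PP^3}( 1)$, hence $f^* \calO_{S'}( 1) \cong \calO_S( 1)$. Pulling back along $\pi$ and using $\pi' \circ g = f \circ \pi$ gives
\begin{equation*}
  g^* {\pi'}^* \calO_{S'}( 1) \cong \pi^* \calO_S( 1),
\end{equation*}
which by the definition of $\theta$ and $\theta'$ means $g^*( 2 \theta') = 2 \theta$ in $\NS( A)$. As $\NS( A)$ is torsion-free (it embeds into $H^2( A, \ZZ) \cong \ZZ^6$), this forces $g^* \theta' = \theta$.

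Finally, translation by any point acts as the identity on $\NS$, so $h^* = g^*$ on $\NS( A')$; therefore $h^* \theta' = \theta$, and $h \colon (A, \theta) \to (A', \theta')$ is an isomorphism of principally polarized abelian surfaces. I do not expect a genuine obstacle here: the substantive content — that an isomorphism of Kummer surfaces lifts to the abelian surfaces, up to a group homomorphism and a translation by a $2$-torsion point — is already contained in Lemma \ref{lem:S_determines_A} and Remark \ref{rem:S_determines_A}, and what remains is only the bookkeeping with $\calO( 1)$ together with the torsion-freeness of $\NS( A)$.
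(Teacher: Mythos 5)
Your proposal is correct and follows essentially the same route as the paper: lift $f$ via Lemma \ref{lem:S_determines_A}, deduce $g^*\theta' = \theta$ from $f^*\calO_{S'}(1) \cong \calO_S(1)$, and then use Remark \ref{rem:S_determines_A} together with the triviality of translations on $\NS$ to replace $g$ by a genuine homomorphism preserving the polarizations. The only difference is cosmetic: you spell out the division by $2$ via torsion-freeness of $\NS(A)$ and write the correction as $\tau_t \circ h$ rather than $g \circ t_a$, which the paper leaves implicit.
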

\begin{proof}
  Let $f \colon \PP^3 \to \PP^3$ be an isomorphism with $f( S) = S'$.
  Due to Lemma \ref{lem:S_determines_A}, $f$ can be lifted to an isomorphism of surfaces $g \colon A \to A'$. Since
  \begin{equation*}
    f^* \calO_{S'}( 1) \cong \calO_S( 1)
  \end{equation*}
  as line bundles over $S$, we have $g^* \theta' = \theta$ in $\NS( A)$, and therefore
  \begin{equation*}
    (g \circ t_a)^* \theta' = \theta
  \end{equation*}
  in $\NS( A)$ for each translation $t_a \colon A \to A$ by a point $a \in A( \CC)$.

  By Remark \ref{rem:S_determines_A}, there is a $2$-torsion point $a \in A[2]$ such that $g \circ t_a$
  is an isomorphism of abelian surfaces, and hence an isomorphism of principally polarized abelian surfaces.
\end{proof}

\begin{lemma} \label{lem:Pic(S)_to_NS(A)_injective}
  The natural map $\pi^* \colon \Pic( S) \to \NS( A)$ is injective. 
\end{lemma}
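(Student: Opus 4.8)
The plan is to deduce the injectivity from two facts: that the kernel of $\pi^*$ contains only torsion line bundles, and that $\Pic(S)$ is torsion-free. First I would record that $\Pic^0(S) = 0$: the minimal resolution $f\colon \tilde S \to S$ is a K3 surface, and $S$ has only ordinary double points, which are rational singularities, so $f_*\calO_{\tilde S} = \calO_S$ and $R^1 f_*\calO_{\tilde S} = 0$, hence $H^1(S, \calO_S) \cong H^1(\tilde S, \calO_{\tilde S}) = 0$. Moreover $f^*\colon \Pic(S) \to \Pic(\tilde S)$ is injective, because $f^*\mathcal L \cong \calO_{\tilde S}$ forces $\mathcal L \cong f_* f^*\mathcal L \cong f_*\calO_{\tilde S} \cong \calO_S$ by the projection formula; and $\Pic(\tilde S) = \NS(\tilde S)$ embeds into the torsion-free group $H^2(\tilde S, \ZZ)$, so $\Pic(S)$ is torsion-free.

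Next, let $\mathcal L \in \Pic(S)$ with $\pi^*\mathcal L = 0$ in $\NS(A)$, that is, $\pi^*\mathcal L \in \Pic^0(A)$. Since $\pi$ is finite surjective of degree $2$ between normal surfaces, there is a norm homomorphism $\mathrm{Nm}\colon \Pic(A) \to \Pic(S)$ with $\mathrm{Nm}(\pi^*\mathcal M) = \mathcal M^{\otimes 2}$ for every $\mathcal M \in \Pic(S)$, and $\mathrm{Nm}$ sends algebraically trivial bundles to algebraically trivial ones. Hence $\mathcal L^{\otimes 2} = \mathrm{Nm}(\pi^*\mathcal L)$ lies in $\Pic^0(S) = 0$, so $\mathcal L$ is $2$-torsion, and therefore $\mathcal L = \calO_S$ by torsion-freeness of $\Pic(S)$. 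Alternatively one can avoid the norm map: the transfer for the degree-$2$ morphism gives $\pi_*\pi^* = 2$ on rational cohomology, so $c_1(\mathcal L)$ is torsion in $H^2(S,\ZZ)$, and since the kernel of $\Pic(S) \to H^2(S,\ZZ)$ equals $\Pic^0(S) = 0$, the class $\mathcal L$ is torsion, hence trivial.

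The one point that needs some care is the construction of the norm map (or the transfer), because $\pi$ is ramified over the $16$ nodes of $S$ and so is not flat there. This can be dealt with either by the classical construction of the norm homomorphism for a finite surjective morphism of normal integral schemes, or --- probably cleaner in this setting --- by restricting to the smooth loci: there $\pi$ is an honest finite \'{e}tale double cover $A^{\circ} \to S^{\sm}$, the norm map is elementary, and one recovers the statement on all of $S$ using that $\Pic(S) \hookrightarrow \Pic(S^{\sm})$ and $\Pic(A) = \Pic(A^{\circ})$ since the removed loci have codimension $2$ in the (normal, indeed smooth for $A$) surfaces. I expect this bookkeeping to be the only mild obstacle; the rest is formal.
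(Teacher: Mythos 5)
Your proof is correct in substance, but it runs along a genuinely different track from the paper's. The shared ingredient is the first step: injectivity of the resolution pullback $\Pic(S)\to\Pic(\tilde S)$ via $f_*\calO_{\tilde S}=\calO_S$ and the projection formula (this is the paper's argument verbatim), which for you yields torsion-freeness of $\Pic(S)$. After that, the paper passes to the blow-ups, where the induced double cover $\tilde\pi\colon\tilde A\to\tilde S$ is finite \emph{flat}, and applies $\det\tilde\pi_*({-})$ together with the projection formula to get $L^{\otimes 2}\sim_{\alg}\calO_{\tilde S}$, finishing on the K3 surface $\tilde S$; you instead stay on the singular quotient and kill the kernel of $\pi^*$ by the degree-$2$ norm/transfer. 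This costs you the extra input $H^1(S,\calO_S)=0$ (correctly obtained from the rationality of the nodes and the K3 resolution), but in the transfer version it spares you the flatness issue entirely: $\pi_*\pi^*=2$ on $H^*(S,\QQ)$ is available for any finite group quotient, $c_1(\pi^*L)=0$ because $\Pic^0(A)=\ker c_1$ on an abelian surface, and injectivity of $c_1$ on $\Pic(S)$ (exponential sequence plus GAGA) together with torsion-freeness conclude. That transfer variant is complete as sketched and is the cleaner of your two options. In the norm variant, one point needs slightly more than the bookkeeping you flagged: after restricting to the \'etale cover $A^{\circ}\to S^{\sm}$, the identity $\mathrm{Nm}(\pi^*L)=(L|_{S^{\sm}})^{\otimes 2}$ takes place in $\Pic(S^{\sm})$, and the claim that this class ``lies in $\Pic^0(S)=0$'' still requires transporting algebraic triviality from the non-proper $S^{\sm}$ back to $S$ --- for instance by a seesaw argument applied to the norm of the Poincar\'e family on $S^{\sm}\times\Pic^0(A)$, or by noting that the resulting class pulled back to $\tilde S$ is orthogonal to the exceptional $(-2)$-curves and hence cannot be a nonzero combination of them. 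This is genuinely doable, so it is not a gap, but it is where the real work sits; the paper's device of resolving first, so that the double cover becomes flat and the norm is just a determinant of a pushforward, is precisely what makes such care unnecessary there.
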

\begin{proof}
  We use the commutative diagram
  \begin{equation*}
  \begin{tikzcd}
    \tilde{A} \arrow{r}{\tilde{\pi}} \arrow[swap]{d}{p_A} & \tilde{S} \arrow{d}{p_S}\\
    A \arrow{r}{\pi} & S
  \end{tikzcd}    
  \end{equation*}
  where $p_S$ is the blow-up of the $16$ points in $S^{\sing}$, $p_A$ is the blow-up of the $16$ points in $A[2]$,
  and $\tilde{\pi}$ is the induced map between these blow-ups. We claim that
  \begin{equation*}
    p_S^* \colon \Pic( S) \to \Pic( \tilde{S})
  \end{equation*}
  is injective. To check this, let $L$ be a line bundle over $S$ with $p_S^*( L) \cong \calO_{\tilde{S}}$. Since
  \begin{equation*}
    (p_S)_*( \calO_{\tilde{S}}) = \calO_S,
  \end{equation*}
  we then have $(p_S)_* p_S^*( L) \cong \calO_S$, and hence $L \cong \calO_S$ by the projection formula.

  Having proved that $p_S^*$ is injective, it now suffices to show that
  \begin{equation*}
    \tilde{\pi}^*: \Pic( \tilde{S}) \to \NS( \tilde{A})
  \end{equation*}
  is injective as well. For this, let now $L$ be a line bundle over $\tilde{S}$ such that
  \begin{equation*}
    \tilde{\pi}^*( L) \sim_{\alg} \calO_{\tilde{A}}
  \end{equation*}
  where $\sim_{\alg}$ denotes algebraic equivalence. Then
  \begin{equation*}
    \tilde{\pi}_* \tilde{\pi}^*( L) \sim_{\alg} \tilde{\pi}_*( \calO_{\tilde{A}})
  \end{equation*}
  where both are vector bundles of rank $2$ since $\tilde{\pi}$ is finite and flat of degree $2$.
  Using the projection formula and taking determinants, we conclude that
  \begin{equation*}
    L^{\otimes 2} \sim_{\alg} \calO_{\tilde{S}}.
  \end{equation*}
  Because $\tilde{S}$ is a (smooth) K3 surface, this implies $L \cong \calO_{\tilde{S}}$, as required.
\end{proof}

\begin{cor} \label{cor:S_projectively_equivalent_to_S'}
  If $(A, \theta)$ and $(A', \theta')$ are isomorphic as principally polarized abelian surfaces,
  then the Kummer surfaces $S \subset \PP^3$ and $S' \subset \PP^3$ are projectively equivalent.
\end{cor}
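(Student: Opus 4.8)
The plan is to lift an isomorphism of principally polarized abelian surfaces to an isomorphism between the two copies of $\PP^3$ that carries $S$ onto $S'$. So suppose $\phi \colon (A, \theta) \to (A', \theta')$ is an isomorphism of principally polarized abelian surfaces. Being a group homomorphism, $\phi$ commutes with the standard involutions $-\id_A$ and $-\id_{A'}$, and hence descends to an isomorphism $\bar\phi \colon S \to S'$ sitting in a commutative square with $\pi$ and the analogous covering map $\pi' \colon A' \twoheadrightarrow S'$.

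The next step is to check that $\bar\phi^* \calO_{S'}(1) \cong \calO_S(1)$ in $\Pic(S)$. Using the commutative square, $\pi^* \bar\phi^* \calO_{S'}(1) \cong \phi^* (\pi')^* \calO_{S'}(1)$; the class of $(\pi')^* \calO_{S'}(1)$ in $\NS(A')$ equals $2\theta'$ by the defining property of $\theta'$, so its pullback under $\phi$ has class $\phi^*(2\theta') = 2\theta$ because $\phi$ preserves the polarizations. On the other hand, $\pi^* \calO_S(1)$ has class $2\theta$ in $\NS(A)$ as well. Thus $\bar\phi^* \calO_{S'}(1)$ and $\calO_S(1)$ have the same image in $\NS(A)$ under $\pi^*$, and Lemma \ref{lem:Pic(S)_to_NS(A)_injective} then yields $\bar\phi^* \calO_{S'}(1) \cong \calO_S(1)$.

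Finally, I would convert this isomorphism of polarized pairs into a projective equivalence. A Kummer surface $S \subset \PP^3$ is a quartic, so its ideal sheaf is $\calO_{\PP^3}(-4)$; twisting $0 \to \calO_{\PP^3}(-4) \to \calO_{\PP^3} \to \calO_S \to 0$ by $\calO_{\PP^3}(1)$ and using $H^0(\PP^3, \calO_{\PP^3}(-3)) = H^1(\PP^3, \calO_{\PP^3}(-3)) = 0$ shows that the restriction $H^0(\PP^3, \calO_{\PP^3}(1)) \to H^0(S, \calO_S(1))$ is an isomorphism of $4$-dimensional vector spaces. Hence the embedding $S \hookrightarrow \PP^3$ is the one attached to the complete linear system $|\calO_S(1)|$, and likewise $S' \hookrightarrow \PP^3$ to $|\calO_{S'}(1)|$. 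Pulling back sections along $\bar\phi$ then gives a linear isomorphism $H^0(S', \calO_{S'}(1)) \xrightarrow{\sim} H^0(S, \calO_S(1))$, which induces an isomorphism $\PP^3 \to \PP^3$ whose restriction to $S$ is $\bar\phi$; therefore $S$ and $S'$ are projectively equivalent.

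I expect the main point needing attention to be this last normalization step: one must know that the given projective embeddings of $S$ and $S'$ are the complete ones, so that an abstract isomorphism respecting the hyperplane classes extends automatically to the ambient $\PP^3$. The passage from numerical data in $\NS(A)$ to an honest isomorphism of line bundles on $S$ is taken care of by the already-established injectivity of $\pi^* \colon \Pic(S) \to \NS(A)$, so no further work is required there.
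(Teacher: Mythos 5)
Your proof is correct and follows essentially the same route as the paper: descend the polarized isomorphism to $\bar\phi \colon S \to S'$, use Lemma \ref{lem:Pic(S)_to_NS(A)_injective} to upgrade the equality of classes in $\NS(A)$ to $\bar\phi^*\calO_{S'}(1) \cong \calO_S(1)$, and extend to the ambient $\PP^3$ via the induced isomorphism on $H^0$ of the hyperplane bundles. The only difference is that you spell out the linear normality of the quartic embedding via the ideal sheaf sequence, a step the paper leaves implicit.
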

\begin{proof}
  Let $g \colon A \to A'$ be an isomorphism of principally polarized abelian surfaces.
  Then $g$ is in particular a group homomorphism, so
  \begin{equation*}
    (-\id_{A'}) \circ g = g \circ (-\id_A).
  \end{equation*}
  Therefore, $g$ descends to an isomorphism
  \begin{equation*}
    f \colon S \to S'.
  \end{equation*}
  Since moreover $g^*( \theta') = \theta$, the two line bundles $\calO_S( 1)$ and $f^* \calO_{S'}( 1)$ over $S$
  have the same image in $\NS( A)$. Using Lemma \ref{lem:Pic(S)_to_NS(A)_injective}, we conclude that
  \begin{equation*}
    f^* \calO_{S'}( 1) \cong \calO_S( 1)
  \end{equation*}
  as line bundles over $S$. Therefore, $f$ induces an isomorphism
  \begin{equation*}
    H^0( S', \calO_{S'}( 1)) \longrightarrow H^0( S, \calO_S( 1))
  \end{equation*}
  of $4$-dimensional vector spaces. Thus $f$ extends to an isomorphism $\PP^3 \to \PP^3$.
\end{proof}

\begin{prop} \label{prop:gysin_iso}
  The Gysin map $\partial \colon \Br( \PP^3 \setminus S) \longrightarrow H^1_{\et}( S^{\sm}, \QQ/\ZZ)$
  is an isomorphism.
\end{prop}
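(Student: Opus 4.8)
The plan is to use the standard exact sequence relating the Brauer group of an open variety to the Brauer group of the total space and the ramification (Gysin) data along the boundary divisor, applied to the pair $S^{\sm} \subset \PP^3 \setminus (S^{\sing} \text{-cone})$, and to exploit the fact that $\PP^3$ is rational with trivial Brauer group and trivial higher unramified cohomology in the relevant degrees. Concretely, I would first replace $\PP^3$ by the open subset $U = \PP^3 \setminus S^{\sing}$ obtained by deleting the $16$ nodes; since $S^{\sing}$ has codimension $3$ in $\PP^3$, purity gives $\Br(\PP^3) = \Br(U)$ and more generally $H^i_{\et}(\PP^3, \QQ/\ZZ) = H^i_{\et}(U, \QQ/\ZZ)$ for $i \le 2$. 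Inside $U$ the divisor $S \cap U = S^{\sm}$ is smooth, so I can apply the Bloch--Ogus / Gysin localization sequence
\begin{equation*}
  H^2_{\et}(U, \QQ/\ZZ(1)) \to H^2_{\et}(U \setminus S^{\sm}, \QQ/\ZZ(1)) \xrightarrow{\ \partial\ } H^1_{\et}(S^{\sm}, \QQ/\ZZ) \to H^3_{\et}(U, \QQ/\ZZ(1)).
\end{equation*}
Here $U \setminus S^{\sm} = \PP^3 \setminus S$, and over $\CC$ the twist is harmless, so the middle term receives the torsion subgroup of $H^2_{\et}(\PP^3 \setminus S, \QQ/\ZZ)$, which contains $\Br(\PP^3 \setminus S)$.

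The key steps are then: (1) identify the second cohomology of the open threefold with the Brauer group, i.e. show that the map $\Br(\PP^3 \setminus S) \hookrightarrow H^2_{\et}(\PP^3 \setminus S, \QQ/\ZZ)$ is the inclusion of the whole group onto which $\partial$ restricts, using that $\PP^3 \setminus S$ is a smooth rational variety so its $H^2_{\et}(\cdot, \QQ/\ZZ)$ is all torsion and equals the Brauer group once $H^2_{\et}(\cdot, \hat{\ZZ}(1)) = \Pic \otimes \hat{\ZZ}$-type contributions are accounted for; (2) show the left-hand map is zero, equivalently $\partial$ is injective, by noting that any class in $\Br(\PP^3)=\Br(\PP^3\setminus S^{\sing})=0$ (Brauer group of $\PP^3$ over $\CC$ vanishes, being rational and simply connected), so that no class in $\Br(\PP^3 \setminus S)$ is unramified along $S$ except $0$; (3) show surjectivity of $\partial$ by proving $H^3_{\et}(\PP^3, \QQ/\ZZ(1)) = H^3_{\et}(U, \QQ/\ZZ(1))$ vanishes — this follows since $H^3_{\et}(\PP^3, \QQ/\ZZ) = 0$ for the projective space over $\CC$ (its cohomology is concentrated in even degrees) and, via the comparison theorem already invoked in the corollaries above, equals singular $H^3(\PP^3, \QQ/\ZZ) = 0$. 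Combining (2) and (3), $\partial$ is an isomorphism onto $H^1_{\et}(S^{\sm}, \QQ/\ZZ)$, and by (1) its source is exactly $\Br(\PP^3 \setminus S)$.

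The main obstacle I expect is the bookkeeping in step (1): one must be careful that the torsion in $H^2_{\et}(\PP^3 \setminus S, \QQ/\ZZ)$ really coincides with the Brauer group and not a larger group, i.e. that the Kummer/Picard contribution to $H^2_{\et}(\PP^3 \setminus S, \mu_n)$ is absorbed correctly when passing to $\QQ/\ZZ$-coefficients. Since $\PP^3 \setminus S$ is smooth with $H^1_{\et}(\cdot, \calO) = H^2_{\et}(\cdot, \calO) = 0$ (it is rational), the cohomological and the \'{e}tale Brauer group agree and $\Br(\PP^3\setminus S) = H^2_{\et}(\PP^3\setminus S, \Gm)_{\mathrm{tors}}$, and the Kummer sequence identifies this with the quotient of $\varinjlim H^2_{\et}(\PP^3\setminus S, \mu_n)$ by the divisible part of $\Pic$; over $\CC$ the relevant comparison with singular cohomology (already used above for $S^{\sm}$) makes everything finite and explicit, so this is routine once set up. A cleaner alternative is simply to cite the Bloch--Ogus spectral sequence / Colliot-Th\'{e}l\`{e}ne--Skorobogatov purity statement directly: for a smooth variety $X$ over $\CC$ and a smooth divisor $D$, the residue $\Br(X \setminus D) \to H^1_{\et}(D, \QQ/\ZZ)$ is an isomorphism whenever $\Br(X) = 0 = H^3_{\mathrm{nr}}(X)$, and both hypotheses hold for $X = \PP^3$ minus the $16$ nodes. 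Either route yields the proposition.
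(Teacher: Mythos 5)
Your proposal is correct and follows essentially the same route as the paper: the exact sequence you derive by deleting the $16$ nodes (purity in codimension $3$) and applying the Gysin sequence for the smooth pair $S^{\sm} \subset \PP^3 \setminus S^{\sing}$ is precisely the form of the sequence the paper cites from Bright, and both arguments then conclude from $\Br(\PP^3) = 0$ and $H^3_{\et}(\PP^3, \mu_n) = 0$. The Picard bookkeeping you flag does work out as you expect (the $\Pic/n$-part of $H^2_{\et}(\PP^3\setminus S,\mu_n)$ comes from $\PP^3\setminus S^{\sing}$, hence has trivial residue, and $\Pic(\PP^3\setminus S)\cong\ZZ/4$ is finite so it disappears with $\QQ/\ZZ$-coefficients), and is exactly what the cited sequence with Brauer-group terms packages for you; note only that you need the purity isomorphism $H^i_{\et}(\PP^3)\cong H^i_{\et}(\PP^3\setminus S^{\sing})$ up to degree $3$, not just $i\le 2$, which indeed holds since the deleted locus has codimension $3$.
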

\begin{proof}
  Since $\PP^3 \setminus S$ is smooth and affine, the natural map
  \begin{equation*}
    \Br( \PP^3 \setminus S) \longrightarrow H^2_{\et}( \PP^3 \setminus S, \Gm)
  \end{equation*}
  is an isomorphism \cite{gabber,hoobler}. We use the Gysin sequence \cite[Corollary 16.2]{milneLEC} in the form
  given by \cite[Corollary 2.5]{bright}. Thus we obtain an exact sequence
  \begin{equation*}
    0 \longrightarrow \Br( \PP^3)[n] \longrightarrow \Br( \PP^3 \setminus S)[n]
      \longrightarrow H^1_{\et}( S^{\sm}, \ZZ/n) \longrightarrow H^3_{\et}( \PP^3, \mu_n).
  \end{equation*}
  Since $\Br( \PP^3) = 0$ and $H^3_{\et}( \PP^3, \mu_n) = 0$ \cite[Example 16.3]{milneLEC},
  it follows that the restrictions
  \begin{equation*}
    \partial_n \colon \Br( \PP^3 \setminus S)[n] \longrightarrow H^1_{\et}( S^{\sm}, \ZZ/n)
  \end{equation*}
  of the Gysin map $\partial$ are all isomorphisms. Hence $\partial$ is also an isomorphism.
\end{proof}

\begin{cor} \label{cor:Br}
  The Brauer group of $\PP^3 \setminus S$ is isomorphic to $(\ZZ/2)^5$.
\end{cor}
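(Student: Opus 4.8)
The plan is to read the statement off from two facts already established. Proposition~\ref{prop:gysin_iso} provides an isomorphism
\begin{equation*}
  \partial \colon \Br( \PP^3 \setminus S) \longrightarrow H^1_{\et}( S^{\sm}, \QQ/\ZZ),
\end{equation*}
so it remains only to identify the group on the right. This was done in the chain of corollaries following the computation of $\pi_1( S^{\sm})$: the Hurewicz theorem identifies $H_1( S^{\sm}, \ZZ)$ with the maximal abelian quotient $\Lambda/2\Lambda \oplus \ZZ/2 \cong (\ZZ/2)^5$ of $\pi_1( S^{\sm}) = \Lambda \rtimes \ZZ/2$; the universal coefficient theorem then gives $H^1( S^{\sm}, \QQ/\ZZ) \cong \Hom( (\ZZ/2)^5, \QQ/\ZZ) \cong (\ZZ/2)^5$; and the comparison theorem between singular and \'{e}tale cohomology identifies this with $H^1_{\et}( S^{\sm}, \QQ/\ZZ)$. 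Composing with $\partial$ yields $\Br( \PP^3 \setminus S) \cong (\ZZ/2)^5$.

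There is no genuine obstacle here: the statement is a formal consequence of results proved above. The only point worth flagging is that the isomorphism of Proposition~\ref{prop:gysin_iso} is asserted merely as an isomorphism of abstract abelian groups, which is exactly what the present statement needs; the finer structure — in particular the distinguished order-$2$ class $\alpha_S$ of Definition~\ref{def:alpha_S} and its behaviour under birational maps — is what will be exploited later, and none of it is required for the bare group-theoretic computation. Alternatively, one could argue directly from the exact Gysin sequence displayed in the proof of Proposition~\ref{prop:gysin_iso}, observing that each term $H^1_{\et}( S^{\sm}, \ZZ/n)$ is $2$-torsion so that the Brauer group is as well, but quoting the proposition is the shortest route.
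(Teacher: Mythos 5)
Your proposal is correct and matches the paper's (implicit) argument: the corollary is stated there without proof precisely because it follows by composing the Gysin isomorphism of Proposition \ref{prop:gysin_iso} with the earlier computation $H^1_{\et}( S^{\sm}, \QQ/\ZZ) \cong (\ZZ/2)^5$. Nothing further is needed.
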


\begin{defi} \label{def:beta_S}
  Let $S \subset \PP^3$ be a Kummer surface. Then we denote by
  \begin{equation*}
    \beta_S \in \Br( \PP^3 \setminus S)
  \end{equation*}
  the class given by $\partial( \beta_S) = \alpha_S$ for the Gysin isomorphism $\partial$
  in Proposition \ref{prop:gysin_iso}.
\end{defi}
This class $\beta_S$ has by construction order $2$. It is preserved by projective equivalence, in the sense that
$f^*( \beta_{S'}) = \beta_S$ in $\Br( \PP^3 \setminus S)$ for each isomorphism $f \colon \PP^3 \to \PP^3$ with $f(S) = S'$.

\begin{rem}
  Let $X$ be a projective variety over $\CC$. Let $U \subseteq X$ be a smooth open subvariety.
  Then $\Br( U)$ embeds into $\Br \CC( X)$ according to \cite[Corollaire 1.10]{GBII}.

  Given another projective variety $X'$ over $\CC$, a smooth open subvariety $U' \subseteq X'$,
  Brauer classes $\beta \in \Br( U)$ and $\beta' \in \Br( U')$ and a birational map 
  \begin{equation*}
    f \colon X \dashrightarrow X',
  \end{equation*}
  we can thus compare $f^*( \beta')$ and $\beta$ in $\Br \CC( X)$. We say that \emph{the pairs $( X, \beta)$
  and $( X', \beta')$ are birational} if $f^*( \beta') = \beta$ in $\Br \CC( X)$ for some birational map $f$.
  We observe that such a birational map $f$ exists if and only if there is an isomorphism of $\CC$-algebras
  \begin{equation*}
    D_{\beta} \to D_{\beta'}
  \end{equation*}
  where $D_{\beta}$ and $D_{\beta'}$ are the unique central division algebras over the function fields
  $\CC( X)$ and $\CC( X')$ that have Brauer classes $\beta$ and $\beta'$, respectively.
\end{rem}

\begin{rem}
  Let $R$ be a discrete valuation ring with quotient field $R \subset K$ and residue field $R \twoheadrightarrow k$
  of characteristic zero. There is a natural short exact sequence
  \begin{equation*}
  \begin{tikzcd}
    0 \arrow{r} & \Br( R) \arrow{r} & \Br( K) \arrow{r}{\ram} & H^1_{\Gal}( k,\QQ/\ZZ) \arrow{r} & 0
  \end{tikzcd}
  \end{equation*}
  which describes the ramification of classes in $\Br( K)$ along the divisor $k$ and is therefore called
  the ramification sequence; see for example \cite[Chapter 10]{saltman}.

  To compare this ramification map to the Gysin map $\partial$, we consider the diagram
  \begin{equation*}
  \begin{tikzcd}
    \Br( \PP^3 \setminus S) \arrow{r}{\partial} \arrow[hookrightarrow]{d}
       & H^1_{\et}( S^{\sm}, \QQ/\ZZ) \arrow[hookrightarrow]{d}\\
    \Br \CC( \PP^3) \arrow{r}{\ram} & H^1_{\Gal} ( \CC( S), \QQ/\ZZ)
  \end{tikzcd}
  \end{equation*}
  in which both vertical maps are restrictions to the generic point.
  This diagram commutes up to sign; see for example \cite[Lemma 2.5]{BKT} or \cite[\S 3.3]{colliot} for more details.
  The sign does not matter here, because every class in $\Br( \PP^3 \setminus S)$ is $2$-torsion according to
  Corollary \ref{cor:Br}.

  In particular, the class $\beta_S \in \Br( \PP^3 \setminus S)$ is ramified along the irreducible divisor $S \subset
  \PP^3$, and its ramification there corresponds to the $2$-sheeted covering $\pi \colon A \twoheadrightarrow S$.
  These two properties characterize $\beta_S$ uniquely, according to Proposition \ref{prop:gysin_iso}.
\end{rem}

\begin{defi}
  Let $f \colon X \dashrightarrow X'$ be a birational map between smooth projective varieties over $\CC$.
  We say that an irreducible divisor $D \subset X$ is \emph{not exceptional} for $f$ if $f$ can be
  represented by an open immersion $f_U \colon U \to X'$ with $U \subseteq X$ open and $U \cap D \neq \emptyset$.  
\end{defi}
In this case, the closure of $f_U( D \cap U)$ in $X'$ does not depend on the choice of $U$, and is again
an irreducible divisor. We denote this divisor by $f( D) \subset X'$. Then $f$ restricts to a birational map
$f_{|D} \colon D \dashrightarrow f( D)$, and $f( D)$ is not exceptional for $f^{-1} \colon X' \dashrightarrow X$.
If $f( D)$ is also not exceptional for $g \colon X' \dashrightarrow X''$,
then $D$ is not exceptional for $g \circ f \colon X \dashrightarrow X''$.

\begin{defi}
  An algebraic variety is \emph{birationally ruled} if it is birational to a product of $\PP^1$ times
  another algebraic variety.
\end{defi}

\begin{prop} \label{prop:ruled}
  Let $f \colon X \dashrightarrow X'$ be a birational map between smooth projective varieties over $\CC$.
  Let $D \subset X$ be an irreducible divisor such that $D$ is not birationally ruled.
  Then $D$ is not exceptional for $f$.
\end{prop}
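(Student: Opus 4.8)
The plan is to argue by contraposition: if $D$ is exceptional for $f$, then $D$ is birationally ruled. Resolve the birational map $f$ by a common smooth projective model: choose a smooth projective variety $Z$ with birational morphisms $p \colon Z \to X$ and $q \colon Z \to X'$ such that $q = f \circ p$ wherever defined. By Hironaka, I may take $p$ to be a composition of blow-ups along smooth centres, so that the exceptional locus of $p$ is a divisor $E \subset Z$ whose irreducible components are each birationally ruled over their images (a blow-up of a smooth variety along a smooth centre of codimension $\geq 2$ has $\PP^{c-1}$-bundle exceptional divisor, and ruledness is a birational invariant).

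Next I identify which divisor on $Z$ dominates $D$. Since $p$ is birational, there is a unique irreducible divisor $\tilde D \subset Z$ with $p(\tilde D) = D$ and $p_{|\tilde D} \colon \tilde D \to D$ birational; concretely $\tilde D$ is the strict transform of $D$. The key dichotomy is whether $\tilde D$ is contracted by $q$ or not. If $q(\tilde D)$ is again a divisor, then $q_{|\tilde D} \colon \tilde D \dashrightarrow q(\tilde D)$ is birational, and one checks that this realises $f$ as an open immersion on a neighbourhood of the generic point of $D$ — that is, $D$ is not exceptional for $f$, contrary to assumption. Hence $\tilde D$ must be contracted by $q$, i.e. $\dim q(\tilde D) < \dim \tilde D = \dim X - 1$. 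Now apply the same reasoning with the roles of $p$ and $q$ interchanged: taking $q$ also to be a composition of smooth blow-ups, the fact that $\tilde D$ is $q$-exceptional means $\tilde D$ is one of the components of the exceptional divisor of $q$ (or is dominated by strict transforms thereof), hence $\tilde D$ is birationally a $\PP^{k}$-bundle for some $k \geq 1$ over a lower-dimensional base, so $\tilde D$ is birationally ruled. Since $D$ is birational to $\tilde D$, $D$ is birationally ruled as well.

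The main obstacle is the bookkeeping in the middle step: one cannot simultaneously assume both $p$ and $q$ are compositions of smooth blow-ups with $\tilde D$ literally a component of the exceptional divisor of $q$, because the strict transform of $D$ under $p$ need not be an exceptional component of some chosen factorization of $q$. The clean way around this is to use weak factorization (Abramovich--Karu--Matsuki--W\l odarczyk): $f$ factors as a chain of blow-ups and blow-downs along smooth centres, and one tracks the divisor $D$ along this chain. At the first step where the (strict transform of the) divisor $D$ gets contracted — which must happen since $D$ is assumed $f$-exceptional — it is being blown down, hence at that stage it is the exceptional divisor of a blow-up along a smooth centre, so a projective bundle over that centre, hence ruled; and ruledness propagates back to $D$ through the earlier birational maps. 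Alternatively, and perhaps more in keeping with the elementary tone here, one invokes the classical fact that a divisor contracted by a birational morphism from a smooth variety is uniruled (indeed ruled in the surface-in-threefold situations that actually occur in this paper), which suffices since we only need ``not birationally ruled $\Rightarrow$ not exceptional''. For the applications to Kummer surfaces — which are birational to K3 surfaces and hence not uniruled — either formulation is more than enough.
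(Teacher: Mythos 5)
Your final argument via weak factorization is essentially the paper's proof: factor $f$ into blow-ups and blow-downs along smooth centres, observe that the only divisor contracted at an elementary step is the exceptional divisor of a smooth blow-up, which is a projective bundle over the centre and hence birationally ruled, and carry this back to $D$ using the birational invariance of ruledness (the paper phrases this contrapositively, showing a non-ruled divisor survives each elementary step). Your first sketch via a common resolution indeed has the gap you yourself flag (the strict transform of $D$ need not be an exceptional component of any chosen factorization of $q$), but since you replace it by the weak-factorization argument, the proposal is correct and takes the same route as the paper.
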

\begin{proof}
  The weak factorization theorem \cite[Theorem 0.1.1]{weak} states that $f$ is a composition of
  blow-ups of smooth centers and their inverses. Therefore, it suffices to treat the two special cases in
  which either $f$ or $f^{-1}$ is such a blow-up.

  Suppose first that $f \colon X \to X'$ is the blow-up of a smooth center $Z \subset X'$ of codimension $d \geq 2$.
  Then the inverse image $E := f^{-1}( Z) \subset X$ is a divisor that is birational to $\PP^{d-1} \times Z$,
  and hence birationally ruled. Every other divisor $D \subset X$ is not exceptional for $f$,
  since $f$ can be represented by the open immersion $X \setminus E \hookrightarrow X'$, 

  If $f^{-1} \colon X' \to X$ is the blow-up of a smooth center $Z \subset X$ of codimension $d \geq 2$,
  then every divisor $D \subset X$ is not exceptional for $f$, 
  since $f$ can be represented by the open immersion $X \setminus Z \hookrightarrow X'$.
\end{proof}

\begin{defi}
  A birational map
  \begin{equation*}
    f \colon \PP^n \dashrightarrow \PP^n
  \end{equation*}
  is a \emph{Cremona isomorphism} from a hypersurface $S \subset \PP^n$ to a hypersurface $S' \subset \PP^n$
  if $S$ is not exceptional for $f$, and $f( S) = S'$, and the restriction
  \begin{equation*}
    f_{|S} \colon S \dashrightarrow S'
  \end{equation*}
  of $f$ extends to an isomorphism $S \to S'$.
  If such a birational map $f$ exists, then the hypersurfaces $S$ and $S'$ are called \emph{Cremona isomorphic}.
\end{defi}

\begin{rem}
  Let $S \subset \PP^n$ and $S' \subset \PP^n$ be smooth hypersurfaces of degree $d$.
  In the case $n = 3$ and $d = 4$ of smooth quartic surfaces, Oguiso showed in \cite[Theorem 1.5]{oguiso}
  that $S$ and $S'$ can be Cremona isomorphic without being projectively equivalent.

  In all other case with $n \geq 3$ and $(n, d) \neq (3, 4)$, the smooth hypersurfaces $S$ and $S'$ can only be 
  Cremona isomorphic if they are projectively equivalent, by \cite[Theorem 1.8]{oguiso}.
\end{rem}

\begin{thm} \label{thm:main}
  Let $S \subset \PP^3$ and $S' \subset \PP^3$ be two Kummer surfaces, with associated abelian surfaces
  $A \twoheadrightarrow S$ and $A' \twoheadrightarrow S'$. Let
  \begin{equation*}
    f \colon \PP^3 \dashrightarrow \PP^3
  \end{equation*}
  be a birational map with $f^*( \beta_{S'}) = \beta_S$ in $\Br \CC( \PP^3)$.
  Then $f$ is a Cremona isomorphism from $S$ to $S'$.
  In particular, $S \cong S'$, and $A \cong A'$ as abelian surfaces.
\end{thm}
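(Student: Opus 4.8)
The plan is to reduce the whole statement to how the two Brauer classes ramify along the Kummer surfaces. First I would show that $S$ is not exceptional for $f$: indeed $S$ is birational to its minimal desingularization, which is a K3 surface, so $S$ has Kodaira dimension $0$ and in particular is not birationally ruled; by Proposition \ref{prop:ruled} the divisor $S \subset \PP^3$ is therefore not exceptional for $f$, and likewise $S' \subset \PP^3$ is not exceptional for $f^{-1}$. Consequently $D_0 := f^{-1}(S') \subset \PP^3$ is a well-defined irreducible divisor, not exceptional for $f$, and $f$ restricts to a birational map $D_0 \dashrightarrow S'$.

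The second and central step is to identify $D_0$ with $S$ using the Brauer classes. Since $D_0$ is not exceptional for $f$, the map $f$ is represented near the generic point of $D_0$ by an open immersion onto an open subset of the target $\PP^3$ which meets $S'$; hence the automorphism of $\CC(\PP^3)$ induced by $f$ carries the local ring of $\PP^3$ at the generic point of $S'$ isomorphically onto the local ring at the generic point of $D_0$, inducing a compatible isomorphism of residue fields $\CC(S') \to \CC(D_0)$. Under these identifications the ramification maps of the two ramification sequences agree, so the residue of $f^{*}(\beta_{S'})$ along $D_0$ corresponds, via $\CC(S') \to \CC(D_0)$, to the residue of $\beta_{S'}$ along $S'$, which is nonzero (it corresponds to the $2$-sheeted covering $A' \to S'$). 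Thus $f^{*}(\beta_{S'})$ is ramified along $D_0$. On the other hand $f^{*}(\beta_{S'}) = \beta_S$ lies in $\Br(\PP^3 \setminus S)$, so it is unramified along every prime divisor of $\PP^3$ other than $S$. Therefore $D_0 = S$, i.e.\ $f(S) = S'$; and, transporting along the isomorphism $\CC(S) \to \CC(S')$ induced by $f_{|S}$, the residue of $\beta_S$ along $S$ corresponds to the residue of $\beta_{S'}$ along $S'$.

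Finally I would lift $f_{|S}$ to the abelian surfaces and descend back. The residue of $\beta_S$ along $S$ is the restriction of $\alpha_S$ to the generic point, and by the construction of $\alpha_S$ it corresponds to the quadratic field extension $\CC(A)/\CC(S)$ cut out by the \'etale double cover $A^{\circ} \to S^{\sm}$; similarly for $S'$. By the previous step the isomorphism $\CC(S') \to \CC(S)$ induced by $f_{|S}$ thus extends to a field isomorphism $\CC(A') \to \CC(A)$, i.e.\ there is a birational map $\psi \colon A \dashrightarrow A'$ lying over $f_{|S} \colon S \dashrightarrow S'$ with respect to the quotient maps $\pi \colon A \to S$ and $\pi' \colon A' \to S'$. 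Since $A$ and $A'$ are abelian surfaces, hence smooth projective minimal surfaces of Kodaira dimension $0$, the birational map $\psi$ is an isomorphism of varieties. From $\pi' \circ \psi = f_{|S} \circ \pi$ (as rational maps) and $\pi \circ (-\id_A) = \pi$ one obtains $\pi' \circ (\psi \circ (-\id_A)) = \pi' \circ \psi$; as $\pi'$ is the quotient by $-\id_{A'}$ and $\psi \circ (-\id_A) \neq \psi$, this forces $\psi \circ (-\id_A) = (-\id_{A'}) \circ \psi$. Hence $\psi$ descends to an isomorphism $\bar\psi \colon S \to S'$ which agrees with $f_{|S}$ on a dense open subset, so $f_{|S}$ extends to $\bar\psi$ and $f$ is a Cremona isomorphism from $S$ to $S'$. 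In particular $S \cong S'$, and $A \cong A'$ as abelian surfaces, for instance by Lemma \ref{lem:S_determines_A} and Remark \ref{rem:S_determines_A}.

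The step I expect to be the main obstacle is the second one: one must check that pulling a Brauer class back along a birational map which happens to be a local isomorphism at the generic point of a divisor transports the residue along that divisor in the expected way. This is the only place where the hypothesis $f^{*}(\beta_{S'}) = \beta_S$ is used, and it is what forces $f(S) = S'$. By comparison the first step is a one-line application of Proposition \ref{prop:ruled}, and the third needs only the ramification-versus-covering dictionary, the elementary descent argument above, and the classical fact that birational minimal surfaces of nonnegative Kodaira dimension are isomorphic.
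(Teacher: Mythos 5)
Your proposal is correct and takes essentially the same route as the paper's proof: Proposition \ref{prop:ruled} to rule out exceptionality, matching of the ramification of the two Brauer classes to force $f(S)=S'$ and to lift the induced map on Kummer surfaces to the associated double covers, rigidity of abelian surfaces to extend the lift to an isomorphism $A \to A'$, equivariance with respect to $-\id$, and descent, concluding via Lemma \ref{lem:S_determines_A} and Remark \ref{rem:S_determines_A}. The only difference is bookkeeping: you phrase the key step through residues at generic points via the ramification sequence and extend a birational map $A \dashrightarrow A'$, whereas the paper restricts the Brauer classes to open subsets, uses functoriality of the Gysin map on the classes $\alpha_S$, $\alpha_{S'}$, and extends an isomorphism of open subvarieties of $A$ and $A'$; these are equivalent by the compatibility of the Gysin and ramification maps noted in the remark following Definition \ref{def:beta_S}.
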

\begin{proof}
  The Kummer surface $S \subset \PP^3$ is not birationally ruled, since it has Kodaira dimension $0$, whereas   
  every birationally ruled surface has Kodaira dimension $-\infty$. Using Proposition \ref{prop:ruled},
  we conclude that $f$ can be represented by an open immersion
  \begin{equation*}
    f_U \colon U \to \PP^3
  \end{equation*}
  with $U \subseteq \PP^3$ open and $U \cap S \neq \emptyset$. Let $U' \subseteq \PP^3$ denote the image of $f_U$.

  Because the Brauer class $\beta_{S'}$ is ramified precisely in $S'$, its pullback $f_U^*( \beta_{S'})$
  is ramified precisely in the inverse image of $S'$. Since $f_U^*( \beta_{S'}) = \beta_S$ by
  assumption, this inverse image is $S \cap U$. Therefore, $f_U$ restricts to an isomorphism
  \begin{equation*}
    f_{U|S} \colon S \cap U \longrightarrow S' \cap U'.
  \end{equation*}
  Shrinking $U$ if necessary, we may assume that $S \cap U$ and $S' \cap U'$ are smooth.

  Due to the functoriality of the Gysin map $\partial$, our assumption $f_U^*( \beta_{S'}) = \beta_S$ implies that
  \begin{equation*}
    (f_{U|S})^* \colon H^1_{\et}( S' \cap U', \ZZ/2) \longrightarrow H^1( S \cap U, \ZZ/2)
  \end{equation*}
  maps $\alpha_{S'}$ to $\alpha_S$. Therefore, $f_{U|S}$ can be lifted to an isomorphism $g_U$ of open parts
  in the corresponding $2$-sheeted coverings $\pi: A \twoheadrightarrow S$ and $\pi': A' \twoheadrightarrow S'$:
  \begin{equation*} \begin{tikzcd}[column sep=large]
    A \arrow[swap,two heads]{d}{\pi}
      & \pi^{-1}( U) \arrow[two heads]{d} \arrow[left hook->]{l} \arrow{r}{g_U}
      & \pi'^{-1}( U') \arrow[two heads]{d} \arrow[hook]{r}
      & A' \arrow[two heads]{d}{\pi'}\\
    S
      & U \cap S \arrow[left hook->]{l} \arrow{r}{f_{U|S}}
      & U' \cap S' \arrow[hook]{r}
      & S'
  \end{tikzcd} \end{equation*}
  Because $A$ and $A'$ are abelian surfaces, the isomorphism $g_U$ between their open subvarieties
  $\pi^{-1}( U)$ and $\pi'^{-1}( U')$ extends to an isomorphism of surfaces
  \begin{equation*}
    g \colon A \to A'. 
  \end{equation*}

  Since the restriction $g_U$ of $g$ to $\pi^{-1}( U)$ descends to $f_{U|S}$ by construction,
  the two compositions $(-\id) \circ g$ and $g \circ (-\id)$ in the diagram
  \begin{equation*} \begin{tikzcd}
    A \arrow{r}{g} \arrow[swap]{d}{-\id} & A' \arrow{d}{-\id}\\
    A \arrow{r}{g} & A'
  \end{tikzcd} \end{equation*}
  agree on the dense open subvariety $\pi^{-1}( U)$ of their common source $A$. Therefore,
  \begin{equation*}
    (-\id) \circ g = g \circ (-\id)
  \end{equation*}
  on all of $A$. Consequently, $g$ descends to an isomorphism
  \begin{equation*}
    f_{|S} \colon S \to S'
  \end{equation*}
  which, by construction, extends the restriction $f_{U|S}$ of $f_U$.

  This proves that $f$ is a Cremona isomorphism from $S$ to $S'$, and in particular that $S \cong S'$.
  The remaining claim follows from Lemma \ref{lem:S_determines_A} together with Remark \ref{rem:S_determines_A}.
\end{proof}

\begin{cor} \label{cor:finitely_many_S}
  Fix a Kummer surface $S \subset \PP^3$. Up to projective equivalence, there are only finitely many
  Kummer surfaces $S' \subset \PP^3$ such that $( \PP^3, \beta_{S'})$ is birational to $( \PP^3, \beta_S)$.
\end{cor}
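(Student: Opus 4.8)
The plan is to reduce the statement to a classical finiteness fact about principally polarized abelian surfaces, using Theorem~\ref{thm:main} as the bridge. By Theorem~\ref{thm:main}, if $(\PP^3, \beta_{S'})$ is birational to $(\PP^3, \beta_S)$, then the associated abelian surface $A'$ is isomorphic to $A$ as an abelian surface (indeed the birational map is a Cremona isomorphism, and the lift is a group homomorphism composed with a $2$-torsion translation, as in Remark~\ref{rem:S_determines_A}). So the first step is to record that the set of possible $A'$, up to isomorphism of abelian surfaces, is a singleton: everything reduces to the principal polarization.

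Next I would pass from the abelian surface $A$ to its polarizations. The point is that $S' = A'/-\id_{A'}$ together with its embedding $S' \hookrightarrow \PP^3$ determines a principal polarization $\theta' \in \NS(A')$, and by Corollary~\ref{cor:S_projectively_equivalent_to_S'} the projective equivalence class of $S' \subset \PP^3$ is determined by the isomorphism class of the principally polarized abelian surface $(A', \theta')$. Since $A' \cong A$, it therefore suffices to show that $A$ carries only finitely many principal polarizations up to isomorphism, i.e. only finitely many $\theta \in \NS(A)$ with $\theta$ a principal polarization, modulo the action of $\Aut(A)$. This is where the main input lies.

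The core step is the finiteness of principal polarizations on a fixed abelian surface up to automorphism. I would argue as follows: a principal polarization $\theta$ satisfies $\theta^2 = 2$ (the self-intersection of the theta divisor on an abelian surface), so the set of such classes lies on a quadric in the finitely generated abelian group $\NS(A)$; the Hodge index theorem makes the intersection form on $\NS(A)$ of signature $(1, \rho - 1)$, and on such a lattice the set of vectors of a fixed positive self-intersection lying in one connected component of the positive cone, modulo the (arithmetic) group of isometries preserving the Hodge structure, is finite — this is the standard compactness-of-fundamental-domain argument for $\mathrm{O}(1, n)$ acting on hyperbolic space. Since $\Aut(A)$ acts on $\NS(A)$ through a finite-index subgroup of the relevant orthogonal group (every Hodge isometry of $\NS(A)$ preserving a polarization class and the Hodge structure is induced by an automorphism, up to finite index, for abelian surfaces), we get finitely many orbits of principal polarizations. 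I expect this lattice-theoretic finiteness to be the main obstacle: it needs the Hodge index theorem, the description of $\Aut(A)$'s action on $\NS(A)$, and the finiteness of class numbers / fundamental domains for arithmetic subgroups of $\mathrm{O}(1, \rho-1)$.

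Finally I would assemble the pieces: there are finitely many principal polarizations $\theta$ on $A$ up to $\Aut(A)$, hence finitely many isomorphism classes of principally polarized abelian surfaces $(A, \theta)$ that can arise as $(A', \theta')$, hence by Corollary~\ref{cor:S_projectively_equivalent_to_S'} finitely many projective equivalence classes of Kummer surfaces $S' \subset \PP^3$ with $(\PP^3, \beta_{S'})$ birational to $(\PP^3, \beta_S)$. One small point to be careful about is that the hypothesis only gives $A' \cong A$ as \emph{abelian} surfaces (not as varieties with a chosen origin that is compatible with anything), but since a principal polarization is a class in $\NS$, which is insensitive to translation, this causes no trouble.
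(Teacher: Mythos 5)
Your reduction is exactly the one in the paper: Theorem~\ref{thm:main} gives $A' \cong A$ as abelian surfaces, Corollary~\ref{cor:S_projectively_equivalent_to_S'} converts isomorphism classes of principally polarized abelian surfaces into projective equivalence classes of Kummer surfaces in $\PP^3$, and the whole statement hinges on the finiteness of principal polarizations on the fixed abelian surface $A$ up to $\Aut(A)$. The paper disposes of this last point by quoting Narasimhan--Nori \cite[Theorem 1.1]{naranori}, which says precisely that an abelian variety admits only finitely many principal polarizations up to automorphisms. Your closing observation, that a principal polarization is a class in $\NS(A)$ and is therefore insensitive to the translation ambiguity in the lift of Remark~\ref{rem:S_determines_A}, is also the right point to make.

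The one place where you diverge is that you try to prove the finiteness yourself by a lattice argument, and there the pivotal claim is asserted rather than proved: that $\Aut(A)$ acts on $\NS(A)$ through a subgroup of finite index in the relevant orthogonal group (or in its subgroup preserving the positive cone). Your parenthetical justification --- ``every Hodge isometry of $\NS(A)$ preserving a polarization class and the Hodge structure is induced by an automorphism, up to finite index'' --- is not an off-the-shelf fact; it is a Torelli-type statement for abelian surfaces. To make it honest you would have to extend an isometry of $\NS(A)$ to a Hodge isometry of $H^2(A,\ZZ)$ (possible only after passing to a finite-index subgroup acting appropriately on the discriminant group, by gluing with the identity on the transcendental lattice), invoke an $H^2$-Torelli theorem for abelian surfaces (Shioda), and handle the ambiguity between $A$ and its dual (harmless here, since $A$ carries a principal polarization) as well as the sign $\pm\id$. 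None of this is fatal --- the statement you want is true --- but as written it is the weakest link, and it is essentially as deep as the Narasimhan--Nori theorem that the paper simply cites; the part you do argue, finiteness of orbits of vectors of square $2$ in the positive cone under the full orthogonal group of a signature $(1,\rho-1)$ lattice, is the easy half. Either carry out the Torelli/gluing argument in detail or replace that entire step by the citation, as the paper does.
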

\begin{proof}
  Narasimhan and Nori proved that each abelian variety $A$ admits only finitely many principal polarizations
  up to automorphisms of $A$ \cite[Theorem 1.1]{naranori}. Using this, the claim follows from Theorem \ref{thm:main}
  in conjunction with Corollary \ref{cor:S_projectively_equivalent_to_S'}.
\end{proof}
\begin{cor}
  The action of the Cremona group $\Aut \CC( x_1, x_2, x_3)$ on the $2$-torsion in the Brauer group
  $\Br \CC( x_1, x_2, x_3)$ has uncountably many orbits.
\end{cor}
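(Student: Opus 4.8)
The plan is to attach to every principally polarized abelian surface $(A,\theta)$ of dimension $2$ a $2$-torsion class in $\Br\CC(x_1,x_2,x_3)$, and then to show that at most finitely many of these classes can lie in a single orbit; since there are uncountably many such surfaces, this produces uncountably many orbits. I would first recall that the Cremona group $\Aut\CC(x_1,x_2,x_3)$ is the group of birational self-maps $f\colon\PP^3\dashrightarrow\PP^3$, that it acts on $\Br\CC(x_1,x_2,x_3)=\Br\CC(\PP^3)$ by $\beta\mapsto f^*(\beta)$, and that two classes lie in one orbit precisely when the corresponding pairs $(\PP^3,\beta)$ are birational in the sense introduced above. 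For each $(A,\theta)$, the linear system $|2\theta|$ realizes $A/{-\id_A}$ as a Kummer surface in $\PP^3$, unique up to projective equivalence \cite[\S8]{hudson}; fixing one such surface $S_{(A,\theta)}\subset\PP^3$ and composing with the embedding $\Br(\PP^3\setminus S_{(A,\theta)})\hookrightarrow\Br\CC(\PP^3)$, I obtain a class $\beta_{(A,\theta)}\in\Br\CC(x_1,x_2,x_3)[2]$ which, since $\beta_S$ is preserved by projective equivalence, depends only on the isomorphism class of $(A,\theta)$.

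The key step is to show that the resulting map from isomorphism classes of principally polarized abelian surfaces of dimension $2$ to Cremona orbits of $2$-torsion Brauer classes, $[(A,\theta)]\mapsto[\text{orbit of }\beta_{(A,\theta)}]$, has finite fibres. Suppose $\beta_{(A,\theta)}$ and $\beta_{(A',\theta')}$ lie in the same orbit, so $f^*(\beta_{S_{(A',\theta')}})=\beta_{S_{(A,\theta)}}$ for some birational $f\colon\PP^3\dashrightarrow\PP^3$; then Theorem \ref{thm:main} gives $A\cong A'$ as abelian surfaces. By the theorem of Narasimhan and Nori \cite[Theorem 1.1]{naranori} used in the proof of Corollary \ref{cor:finitely_many_S}, a fixed abelian surface carries only finitely many principal polarizations up to its automorphisms, so there are only finitely many isomorphism classes of principally polarized abelian surfaces whose underlying abelian surface is isomorphic to $A$. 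Hence the fibre over the orbit of $\beta_{(A,\theta)}$ is finite.

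Finally, isomorphism classes of principally polarized abelian surfaces over $\CC$ are the $\CC$-points of the coarse moduli space $\calA_2$, an irreducible variety of dimension $3$, and thus form an uncountable set; a map with finite fibres out of an uncountable set has uncountable image, so the Cremona action on $\Br\CC(x_1,x_2,x_3)[2]$ must have uncountably many orbits. The step I expect to be essential is the appeal to Theorem \ref{thm:main}, which converts membership in a common Cremona orbit into an isomorphism of the underlying abelian surfaces; the rest is moduli-theoretic bookkeeping, with the minor caveat that one should check that every principally polarized abelian surface of dimension $2$ genuinely arises from a Kummer surface (a classical fact), or else simply restrict attention to Jacobians of genus $2$ curves, which still form an irreducible $3$-dimensional family.
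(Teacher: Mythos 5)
Your proposal is correct and is essentially the paper's own argument: the paper obtains uncountably many Kummer surfaces up to projective equivalence by a parameter count ($18$ parameters for Kummer quartics versus $\dim \Aut(\PP^3) = 15$) and then invokes Corollary \ref{cor:finitely_many_S}, whereas you get the same uncountability from $\dim \calA_2 = 3$ and re-derive the finite-fibre statement from Theorem \ref{thm:main} together with Narasimhan--Nori, which is exactly the proof of that corollary. One remark on your caveat: the ``classical fact'' as you state it is false for decomposable polarizations (for $E_1 \times E_2$ the map given by $|2\theta|$ has image a smooth quadric, not a $16$-nodal quartic), so your fallback of restricting to Jacobians of genus $2$ curves --- equivalently to indecomposable principal polarizations --- is not optional but is the correct and sufficient fix.
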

\begin{proof}
  Kummer surfaces depend on $18$ complex parameters, whereas $\dim \Aut( \PP^3) = 15$.
  Hence there are uncountably many Kummer surfaces up to projective equivalence.  
\end{proof}

Recall that an abelian surface $A$ over $\CC$ is said to \emph{admit real multiplication} if the ring
\begin{equation*}
  \End_{\QQ}( A) = \End( A) \otimes_{\ZZ} \QQ
\end{equation*}
contains a subring isomorphic to $\QQ( \sqrt{d})$ for some integer $d \geq 2$ which is not a square.
\begin{cor} \label{cor:A_no_real_mult}
  In the situation of Theorem \ref{thm:main}, suppose that the abelian surface $A \cong A'$ does not admit
  real multiplication. Then $S$ and $S'$ are projectively equivalent.
\end{cor}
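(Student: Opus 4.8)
The plan is to combine Theorem~\ref{thm:main} with Corollary~\ref{cor:S_projectively_equivalent_to_S'}. By Theorem~\ref{thm:main} there is an isomorphism of abelian surfaces $g_0 \colon A \to A'$. Transporting the principal polarization $\theta'$ along it produces a second principal polarization $\theta'' := g_0^*(\theta') \in \NS(A)$ on $A$, and $g_0$ then identifies $(A,\theta'')$ with $(A',\theta')$ as polarized abelian surfaces. Hence, by Corollary~\ref{cor:S_projectively_equivalent_to_S'}, it suffices to prove that $\theta = \theta''$ in $\NS(A)$; in other words, the statement reduces to the uniqueness of the principal polarization on an abelian surface without real multiplication.

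To prove this I would write $\phi_\theta, \phi_{\theta''} \colon A \xrightarrow{\sim} \hat A$ for the isomorphisms determined by $\theta$ and $\theta''$, and set $u := \phi_\theta^{-1} \circ \phi_{\theta''}$. This is an automorphism of the abelian surface $A$, hence a unit of the ring $\End(A)$; moreover, since $\theta$ and $\theta''$ are both polarizations, $u$ is symmetric for the Rosati involution $\dagger$ attached to $\theta$ and is totally positive. This is the standard dictionary between principal polarizations and totally positive symmetric units relative to a fixed one, as used in~\cite{naranori}. Note that once $u = \id_A$ is established, $\phi_\theta = \phi_{\theta''}$, hence $\theta = \theta''$ in $\NS(A)$, and we are done.

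So it remains to show $u = \id_A$. I would consider the commutative $\QQ$-subalgebra $R := \QQ[u] \subseteq \End_\QQ(A) = \End(A) \otimes_\ZZ \QQ$. Since $\dagger$ fixes $u$ it restricts to the identity on $R$; as $\dagger$ is a positive involution, $R$ is semisimple and carries $\id_R$ as a positive involution, so $R \cong \prod_i F_i$ is a finite product of totally real number fields, and $u$ is totally positive in the sense that every embedding $F_i \hookrightarrow \mathbb{R}$ sends $u$ to a positive real number. The geometric input I would invoke is that, for an abelian surface $A$, every totally real subfield of $\End_\QQ(A)$ has degree at most $2$ over $\QQ$: this follows from Albert's classification of the possible algebras $\End_\QQ(A)$ ($\QQ$, a real quadratic field, an indefinite quaternion algebra over $\QQ$, a CM field of degree $4$, $\QQ \times \QQ$, $\QQ \times K$, $K \times K'$, $\Mat_2(\QQ)$, or $\Mat_2(K)$ with $K,K'$ imaginary quadratic) by a short inspection of the finitely many cases, using that a subfield of degree $3$ cannot occur for divisibility reasons and that a subfield of degree $4$, where it occurs, is a CM field or contains an imaginary quadratic subfield and hence is not totally real. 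Therefore each $F_i$ is $\QQ$ or a real quadratic field; a real quadratic factor $\QQ(\sqrt d)$ would exhibit real multiplication on $A$, contrary to hypothesis. Consequently $R \cong \QQ^k$, and $u$ corresponds to a totally positive unit of the order $\ZZ^k$; the only such unit is $(1,\dots,1)$, so $u = \id_A$.

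The main obstacle is the algebraic lemma that no totally real number field of degree greater than $2$ embeds into the endomorphism algebra of an abelian surface -- elementary, but requiring the above case check. Everything else is formal bookkeeping with the correspondence between polarizations and totally positive symmetric units; as an alternative, one could simply quote the uniqueness of the principal polarization on an abelian surface without real multiplication from the literature on polarizations of abelian varieties, e.g.\ \cite{naranori} and the references therein.
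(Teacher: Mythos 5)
Your proposal is correct, and its first step coincides with the paper's: Theorem \ref{thm:main} gives an isomorphism of abelian surfaces $A\cong A'$, transporting $\theta'$ produces a second principal polarization $\theta''$ on $A$, and Corollary \ref{cor:S_projectively_equivalent_to_S'} turns the equality $\theta=\theta''$ into projective equivalence of $S$ and $S'$. Where you genuinely differ is in the uniqueness statement itself, which the paper isolates as Lemma \ref{lem:theta_unique}: there the authors compute, following \cite{debarre}, that $\psi=\phi_\theta^{-1}\circ\phi_{\theta'}$ has characteristic polynomial $(t^2-nt+1)^2$ with $n=\theta\cdot\theta'$, handle the case $A\sim E\times E$ separately, rule out nilpotents by Poincar\'e reducibility, and thus exhibit the explicit real quadratic field $\QQ(\sqrt{n^2-4})$ inside $\End_{\QQ}(A)$. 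You instead use the dictionary between principal polarizations and Rosati-symmetric totally positive units $u$, the positivity of the Rosati involution to force $\QQ[u]$ to be a product of totally real fields, Albert's classification to bound totally real subfields of $\End_{\QQ}(A)$ by degree $2$, and integrality plus total positivity to get $u=\id$. Both routes work: the paper's argument is more elementary and self-contained and yields the real quadratic field explicitly, while yours is more conceptual, closer in spirit to \cite{naranori}, and generalizes readily, at the cost of invoking the classification and leaving the case inspection as routine. One point worth tightening: if $\QQ[u]$ is not a field, a real quadratic factor sits in $\End_{\QQ}(A)$ only non-unitally, which is not literally the paper's definition of real multiplication; but that case cannot occur, because the corresponding nontrivial idempotent splits $A$ up to isogeny into two elliptic curves, and a real quadratic field does not embed into the endomorphism algebra of an elliptic curve, so in that situation every factor of $\QQ[u]$ is $\QQ$ and your conclusion $\QQ[u]\cong\QQ^k$ stands unchanged.
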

\begin{proof}
  Combine Theorem \ref{thm:main} with Lemma \ref{lem:theta_unique} below
  and with Corollary \ref{cor:S_projectively_equivalent_to_S'}.
\end{proof}

\begin{lemma} \label{lem:theta_unique}
  Let $A$ be an abelian surface over $\CC$ with two different principal polarizations
  \begin{equation*}
    \theta \neq \theta' \in \NS( A).
  \end{equation*}
  Then $A$ admits real multiplication. 
\end{lemma}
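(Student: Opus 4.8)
The plan is to attach to the pair $(\theta,\theta')$ an automorphism of $A$ and to read off a real quadratic field from its characteristic polynomial. Each principal polarization induces an isomorphism, $\phi_{\theta}\colon A\to\hat{A}$ and $\phi_{\theta'}\colon A\to\hat{A}$, so that $\psi:=\phi_{\theta}^{-1}\circ\phi_{\theta'}$ is an \emph{automorphism} of $A$; in particular $\deg\psi=1$. First I would record the standard properties of $\psi$: it is fixed by the Rosati involution $\dagger$ associated with $\theta$ (a formal consequence of the symmetry $\widehat{\phi_{\theta'}}=\phi_{\theta'}$ of polarizations), and it is totally positive for $\dagger$, since $\theta'$ is ample and, under the identification of $\NS(A)$ with the $\dagger$-symmetric endomorphisms, the ample cone corresponds to the totally positive ones. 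Equivalently, the $\CC$-linear operator induced by $\psi$ on the universal cover $V\cong\CC^2$ of $A$ is self-adjoint and positive definite for the Hermitian form attached to $\theta$, being essentially $H_{\theta}^{-1}H_{\theta'}$. Finally $\psi\neq\id$: otherwise $\phi_{\theta'}=\phi_{\theta}$, and hence $\theta'=\theta$ because $\NS(A)\hookrightarrow\Hom(A,\hat{A})$.

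Next I would analyse the characteristic polynomial $P_{\psi}(t)\in\ZZ[t]$ of the action of $\psi$ on $H_1(A,\QQ)\cong\QQ^4$: it is monic of degree $4$, and $P_{\psi}(0)=\deg\psi=1$. Since $\psi$ acts on $V\cong\CC^2$ as a positive self-adjoint operator, its two eigenvalues $\mu_1,\mu_2$ on $V$ are positive reals, so that $P_{\psi}(t)=(t-\mu_1)^2(t-\mu_2)^2$; thus $\mu_1$ and $\mu_2$ are positive algebraic integers with $\mu_1\mu_2=1$. If $\mu_1=\mu_2$, then $\psi$ acts as the scalar $\mu_1$ on $V$, so $\psi=\mu_1\cdot\id_A$ with $\mu_1\in\ZZ_{>0}$ and $\mu_1^2=1$, hence $\psi=\id$, a contradiction. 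If $\mu_1\in\QQ$, then $\mu_1,\mu_2\in\ZZ_{>0}$ with product $1$, so $\mu_1=\mu_2=1$, again excluded. Therefore $\mu_1$ has degree exactly $2$ over $\QQ$ with conjugate $\mu_2$, and
\begin{equation*}
  \QQ[\psi]\cong\QQ[t]/\bigl((t-\mu_1)(t-\mu_2)\bigr)\cong\QQ(\mu_1)
\end{equation*}
is a totally real quadratic field $\QQ(\sqrt{d})$, with $d\geq 2$ not a square, sitting inside $\End_{\QQ}(A)$. Hence $A$ admits real multiplication.

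The step that needs care is the exclusion of the degenerate configurations, so that $\QQ[\psi]$ is genuinely a \emph{field} rather than, say, $\QQ\times\QQ$; this is forced exactly by combining ``$\psi$ is an automorphism'' (so $\mu_1\mu_2=1$, i.e.\ the $\mu_i$ are units) with ``$\psi$ is totally positive'' (so the $\mu_i$ are positive algebraic integers). As a cross-check, one can instead run the argument through the classification of endomorphism algebras of complex abelian surfaces: if $\End_{\QQ}(A)=\QQ$ the above already forces $\psi=\id$; otherwise $\End_{\QQ}(A)$ is a real quadratic field, a quartic CM field, or a $2\times 2$ matrix algebra over $\QQ$ or over an imaginary quadratic field --- all of which contain a real quadratic field --- or else one of $\QQ\times\QQ$, $\QQ\times K$, $K_1\times K_2$ with $K,K_1,K_2$ imaginary quadratic, in which case $A$ is isogenous to a product of two non-isogenous elliptic curves and a direct argument shows that the principal polarization is unique, contradicting $\theta\neq\theta'$. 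Both routes also use that $\End_{\QQ}(A)$ contains no totally real field of degree $>2$: such a field would have degree $4=2\dim A$, forcing the complex structure on $V$ to be multiplication by an element of $K\otimes_{\QQ}\mathbb{R}\cong\mathbb{R}^4$ squaring to $-1$, which is absurd.
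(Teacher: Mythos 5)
Your argument is correct, and it takes a genuinely different route from the paper's, even though both start from the same automorphism $\psi=\phi_{\theta}^{-1}\circ\phi_{\theta'}$. The paper computes the characteristic polynomial of $\psi$ on $H_1(A,\ZZ)$ via Debarre's intersection-theoretic formula, obtaining $P_\psi=(t^2-nt+1)^2$ with $n=\theta\cdot\theta'$, and then must worry about whether $P(\psi)=\psi^2-n\psi+1$ itself vanishes or is merely nilpotent; this forces a case split (if $A\sim E\times E$ one gets $\Mat_{2\times 2}(\QQ)\subset\End_\QQ(A)$ directly, otherwise Poincar\'e complete reducibility excludes nilpotents), plus the separate exclusions $n\geq 2$ (real roots) and $n\neq 2$. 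You instead exploit that $\psi$ is Rosati-symmetric and totally positive, i.e.\ acts on $V$ as the $H_\theta$-self-adjoint positive operator $H_\theta^{-1}H_{\theta'}$: semisimplicity then comes for free, the eigenvalues $\mu_1,\mu_2$ are positive real algebraic integers with $\mu_1\mu_2=1$, and the degenerate cases ($\mu_1=\mu_2$ or $\mu_i\in\QQ$) collapse to $\psi=\id$, so no appeal to Poincar\'e reducibility, no $E\times E$ dichotomy, and no intersection-number computation is needed --- at the price of invoking the standard dictionary between $\NS(A)$ and Rosati-symmetric (totally positive) endomorphisms, which is essentially equivalent in content to the Debarre formula the paper cites. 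Your closing ``cross-check'' via the classification of endomorphism algebras is dispensable and slightly loose (the list omits the indefinite quaternion division algebra case, and the degree bound for totally real subfields is asserted rather than proved), but the main argument does not depend on it.
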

\begin{proof}
  The two principal polarizations $\theta, \theta'$ define two isomorphisms
  \begin{equation*}
    \phi_{\theta}, \phi_{\theta'} \colon A \to \Pic^0( A)
  \end{equation*}
  as in \cite[\S 6, p. 60]{mumford}. These two isomorphisms differ by the automorphism
  \begin{equation*}
    \psi = \phi_{\theta}^{-1} \circ \phi_{\theta'} \colon A \to A.
  \end{equation*}

  Let $P_{\psi}( t)$ denote the characteristic polynomial of $\psi$ acting on $H_1( A, \ZZ) \cong \ZZ^4$.
  According to \cite[Subsection 2.2, p. 346]{debarre}, this polynomial $P_{\psi}( t)$ is the square of the polynomial 
  \begin{equation*}
    P( t) = \dfrac{( t \theta - \theta') \cdot ( t \theta - \theta')}{2} = t^2 - nt + 1
  \end{equation*}
  with $n = \theta \cdot \theta'$, using $\theta \cdot \theta = 2 = \theta' \cdot \theta'$. Hence in particular
  \begin{equation*}
    P( \psi)^2 = ( \psi^2 - n \psi + 1)^2 = 0
  \end{equation*}
  in $\End( A) \subset \End_{\QQ}( A)$.
  Since $P( t)$ has real roots by \cite[\S 16, p. 155]{mumford}, we have $n \geq 2$.

  If $A$ is isogenous to $E \times E$ for some elliptic curve $E$, then $\End_{\QQ}( A)$ contains a subring
  isomorphic to $\Mat_{2 \times 2}( \QQ)$, and therefore $A$ admits real multiplication.

  Suppose that $A$ is not isogenous to $E \times E$ for any elliptic curve $E$.
  Then $\End_{\QQ}( A)$ contains no nonzero nilpotent elements, according to Poincar\'{e}'s
  Complete Reducubility Theorem \cite[Corollary 5.3.8]{BL}. Therefore, we have already
  \begin{equation*}
    P( \psi) = \psi^2 - n \psi + 1 = 0.
  \end{equation*}
  For $n = 2$, this relation $(\psi - 1)^2 = 0$ would imply $\psi = \id$, which is absurd; hence $n \geq 3$.
  So the splitting field of $P( t)$ is the real quadratic number field $\QQ( \sqrt{n^2 - 4})$.
  As $\psi$ generates a $\QQ$-subalgebra of $\End_{\QQ}( A)$ isomorphic to this field, $A$ admits real multiplication.
\end{proof}

\begin{rem}
  Very general principally polarized abelian surfaces $A$ over $\CC$ do not admit real multiplication.
  More precisely, the abelian surfaces $A$ which admit real multiplication by a specific field $\QQ( \sqrt{d})$ form a
  surface $\calH_d$ in the $3$-dimensional moduli space $\calA_2$ of principally polarized abelian surfaces. 
  In particular, all principally polarized abelian surfaces $A$ outside these countably many surfaces
  $\calH_d \subset \calA_2$ have only one principal polarization.

  In the situation of Theorem \ref{thm:main}, suppose now that $\End_{\QQ}( A) \cong \QQ( \sqrt{d})$ and that $\Aut( A)$
  contains an element of negative norm. Then $A$ has only one principal polarization up to automorphisms of $A$,
  due to \cite[Proposition 4.1]{lange}. In this case, we can thus conclude as in Corollary \ref{cor:A_no_real_mult} above
  that $S$ and $S'$ are projectively equivalent.
\end{rem}

\section{Quadratic line complexes and an associated Brauer class} \label{sec:quadratic}
Let $\Grass_1( \PP^3)$ denote the Grassmannian of lines $\ell \subset \PP^3$ over $\CC$.
The Pl\"ucker embedding turns $\Grass_1( \PP^3)$ into a quadric hypersurface in a $\PP^5$.
A smooth divisor
\begin{equation*}
  Q \subset \Grass_1( \PP^3)
\end{equation*}
is called a \emph{quadratic line complex} if $Q$ is the intersection of $\Grass_1( \PP^3)$ with another
quadric hypersurface $q=0$ in the same $\PP^5$. It comes equipped with an \emph{incidence correspondence}
\begin{equation*}
\begin{tikzcd}
  \PP^3 & I_Q \arrow{l}[swap]{\pr_p} \arrow{r}{\pr_\ell} & Q
\end{tikzcd} 
\end{equation*} 
where $I_Q$ consists of all pairs $( p, \ell) \in \PP^3 \times Q$ such that the point $p$ is on the line $\ell$.

The projection $\pr_\ell$ is a $\PP^1$-bundle, as its fiber over a point $\ell \in Q$ is the line $\ell \subset \PP^3$.
The other projection $\pr_p$ is a conic bundle with degeneration, as its fiber
\begin{equation*}
  \pr_p^{-1}( p) \subset \PP^2
\end{equation*}
over a point $p \in \PP^3$ is the conic given by $q=0$ in the $\PP^2$ of lines $\ell \subset \PP^3$ containing $p$;
this fiber cannot be the whole $\PP^2$ by \cite[Section 6.2, Lemma on p. 762]{griffiths-harris}. The set
\begin{equation*}
  S_Q \subset \PP^3
\end{equation*}
of all points $p \in \PP^3$ with $\pr_p^{-1}( p)$ singular is a Kummer surface, and
\begin{equation*}
  \pr_p^{-1}(p) \text{ is } \begin{cases}
    \text{a smooth conic} & \text{if } p \in \PP^3 \setminus S_Q,\\
    \text{a union $\PP^1 \vee \PP^1$ of $2$ lines} & \text{if } p \in S_Q \setminus S_Q^{\sing},\\
    \text{a nonreduced double line $2\PP^1$} & \text{if } p \in S_Q^{\sing},
  \end{cases}
\end{equation*}
as proved for example in \cite[Section 6.2, pp. 762--771]{griffiths-harris}.
\begin{defi} \label{def:beta_Q}
  Let $Q \subset \Grass_1( \PP^3)$ be a quadratic line complex. Then we denote by
  \begin{equation*}
    \beta_Q \in \Br( \PP^3 \setminus S_Q)
  \end{equation*}
  the Brauer class of order $\leq 2$ given by the smooth conic bundle $\pr_p$ over $\PP^3 \setminus S_Q$. 
\end{defi}

Let $Q \subset \Grass_1( \PP^3) \subset \PP^5$ still be a quadratic line complex. The \emph{variety of lines}
\begin{equation*}
  A_Q \subset \Grass_1( \PP^5)
\end{equation*}
consists of all lines in $\PP^5$ that are contained in $Q$. Each such line is contained in a singular fiber
of $\pr_p$ according to \cite[Section 6.3, pp. 778--780]{griffiths-harris}, and the resulting map
\begin{equation*}
  \pi_Q \colon A_Q \longrightarrow S_Q
\end{equation*}
is a $2$-sheeted cover branched in the $16$ singular points of $S_Q$; moreover, $A_Q$ is an abelian surface,
and $S_Q$ is the quotient of $A_Q$ by the standard involution $-\id_{A_Q}$. Let
\begin{equation*}
  \alpha_Q  \in H^1_{\et}( S_Q^{\sm}, \QQ/\ZZ)
\end{equation*}
denote the class of order $2$ corresponding to the \'{e}tale restriction of $\pi_Q$ to the smooth locus
\begin{equation*}
  S_Q^{\sm} \subset S_Q.
\end{equation*}
By construction, $\alpha_Q$ equals the class $\alpha_{S_Q}$ associated with $S_Q$ in Definition \ref{def:alpha_S}. 
\begin{prop} \label{prop:beta_Q=beta_SQ}
  Let $Q \subset \Grass_1( \PP^3)$ be a quadratic line complex. Then
  \begin{equation*}
    \beta_Q = \beta_{S_Q} \in \Br( \PP^3 \setminus S_Q)
  \end{equation*}
  where $\beta_Q$ given by Definition \ref{def:beta_Q}, and $\beta_{S_Q}$ is given by Definition \ref{def:beta_S}.
\end{prop}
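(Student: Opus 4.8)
The plan is to compute the Gysin image $\partial(\beta_Q) \in H^1_{\et}(S_Q^{\sm}, \QQ/\ZZ)$ and to show it equals $\alpha_Q$. Granting this, we are done at once: by construction $\alpha_Q = \alpha_{S_Q}$, while $\alpha_{S_Q} = \partial(\beta_{S_Q})$ by Definition \ref{def:beta_S}, so $\partial(\beta_Q) = \partial(\beta_{S_Q})$, and the injectivity of $\partial$ from Proposition \ref{prop:gysin_iso} forces $\beta_Q = \beta_{S_Q}$.

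To compute $\partial(\beta_Q)$ I would pass to the generic point $\eta$ of $S_Q$. Since $\partial$ is compatible, up to an irrelevant sign (recall $\beta_Q$ has order $\le 2$), with restriction to $\eta$, it suffices to determine the ramification $\ram_{S_Q}(\beta_Q) \in H^1_{\Gal}(\CC(S_Q), \ZZ/2)$ of $\beta_Q$, viewed in $\Br \CC(\PP^3)$, along the irreducible divisor $S_Q$. Over the discrete valuation ring $\calO_{\PP^3, \eta}$ the conic bundle $\pr_p$ is cut out, inside the $\PP^2$ of lines through a point, by a ternary quadratic form that is nondegenerate over $\CC(\PP^3)$ and whose reduction modulo a uniformizer $t$ has rank $2$: indeed, for a general point $p \in S_Q^{\sm}$ the fibre $\pr_p^{-1}(p)$ is the reduced nodal conic $\PP^1 \vee \PP^1$, the degeneration to a double line $2\PP^1$ taking place only along $S_Q^{\sing}$, which has codimension $2$ in $S_Q$ and therefore influences neither the ramification along $S_Q$ nor any class in $H^1_{\et}(S_Q^{\sm}, \QQ/\ZZ)$. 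Diagonalising the form as $\langle u_1, u_2, u_3 t\rangle$ with the $u_i$ units, the generic fibre of $\pr_p$ over $\CC(\PP^3)$ is the conic of the quaternion algebra $(-u_1 u_2,\, -u_1 u_3 t)$, whose residue along $S_Q$ is the class of $\overline{-u_1 u_2}$ in $\CC(S_Q)^{\times}/(\CC(S_Q)^{\times})^2$. As the reduction of the form modulo $t$ is $\langle u_1, u_2\rangle$, which factors over the extension $\CC(S_Q)(\sqrt{-u_1 u_2})$ into the two linear forms cutting out the two components of the degenerate conic over the generic point of $S_Q$, this shows that $\ram_{S_Q}(\beta_Q)$ is the class of the étale double cover of $\eta$ parametrising these two components.

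It remains to recognise that double cover as $\pi_Q$. For $p \in S_Q^{\sm}$ the two lines $L_1, L_2$ in the $\PP^2$ of lines through $p$ with $\pr_p^{-1}(p) = L_1 \cup L_2$ are, viewed inside $\PP^5$, lines lying on $\Grass_1(\PP^3)$ and on the quadric $q = 0$, hence lines lying on $Q$; so $L_1, L_2 \in A_Q$, and $\pi_Q$ sends both to $p$. Since $\pi_Q$ is a $2$-sheeted cover, these are exactly the two points of $A_Q$ over $p$, and hence the double cover of $S_Q^{\sm}$ parametrising the components of the degenerate conics is the étale restriction $A_Q^{\circ} \twoheadrightarrow S_Q^{\sm}$ of $\pi_Q$, whose class is $\alpha_Q$ by definition. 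Transporting along the commuting square relating $\ram$ and $\partial$ then gives $\partial(\beta_Q) = \alpha_Q$, and one concludes as in the first paragraph. The one genuinely delicate step is the residue computation for the conic bundle: it is purely local at $\eta$, and it is the sole place where the explicit degeneration type $\PP^1 \vee \PP^1$ of the fibre is used.
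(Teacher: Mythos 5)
Your overall strategy is the same as the paper's: reduce everything to the identity $\partial( \beta_Q) = \alpha_Q$ and then conclude from $\alpha_Q = \alpha_{S_Q}$, Definition \ref{def:beta_S} and the injectivity of $\partial$ in Proposition \ref{prop:gysin_iso}. The difference is in how that identity is obtained: the paper simply cites Nitsure's theorem on conic bundles, whereas you reprove the special case at hand by a tame-symbol computation at the generic point of $S_Q$, together with the (correct, and genuinely the geometric point) identification of the double cover parametrising the components of the degenerate conics with the \'etale restriction of $\pi_Q \colon A_Q \to S_Q$: the two components of $\pr_p^{-1}( p)$ for $p \in S_Q^{\sm}$ are lines contained in $Q$, hence are exactly the two points of $A_Q$ over $p$. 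If the local computation is done completely, this gives a self-contained substitute for the reference.

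There is, however, one unjustified step in that computation. From the fact that the reduction of the ternary form modulo a uniformizer $t$ has rank $2$ you may only conclude a diagonalisation $\langle u_1, u_2, u_3 t^k \rangle$ with units $u_i$ and some $k \geq 1$, not $k = 1$: a nodal special fibre does not by itself exclude higher order vanishing of the discriminant (compare $\langle 1, 1, t^2\rangle$). The residue of the quaternion algebra $(-u_1u_2,\, -u_1u_3t^k)$ along $S_Q$ is the class of $(\overline{-u_1u_2})^k$, so for even $k$ the class $\beta_Q$ would be unramified along $S_Q$ and your conclusion $\partial( \beta_Q) = \alpha_Q$ would fail; thus the parity of $k$ is exactly what carries the content, and it is the one thing your argument does not address. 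The gap is easy to close: $I_Q$ is smooth, since $\pr_\ell \colon I_Q \to Q$ is a $\PP^1$-bundle over the smooth threefold $Q$. Localising at the node of the fibre over the generic point of $S_Q$, the total space is the hypersurface $u_1x^2 + u_2y^2 + u_3t^k = 0$ inside a regular $3$-dimensional local ring with maximal ideal $(x, y, t)$; for $k \geq 2$ this equation lies in the square of the maximal ideal, so the total space would be singular there. Hence smoothness of $I_Q$ forces $k = 1$, i.e.\ the discriminant vanishes to order exactly one along $S_Q$, and with this added your proof is complete.
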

\begin{proof}
  Applying \cite[Theorem 1.1]{nitsure} to the conic bundle $\pr_p$, we get that the Gysin map
  \begin{equation*}
    \partial \colon \Br( \PP^3 \setminus S_Q) \longrightarrow H^1_{\et}( S_Q^{\sm}, \QQ/\ZZ)
  \end{equation*}
  sends $\beta_Q$ to $\alpha_Q$. Definition \ref{def:beta_S} states that $\partial$
  sends $\beta_{S_Q}$ to $\alpha_{S_Q} = \alpha_Q$ as well.
  Since $\partial$ is injective according to Proposition \ref{prop:gysin_iso}, this implies $\beta_Q = \beta_{S_Q}$.
\end{proof}

\begin{rem}
  In particular, the class $\beta_Q$ is non-zero. In other words, the smooth $\PP^1$-bundle $\pr_p$ over
  $\PP^3 \setminus S_Q$ is not Zariski-locally trivial. The non-vanishing of $\beta_Q$ has already been
  proved earlier in \cite[Proposition 8.1]{nararam3}, in \cite{newstead} and in \cite[Section 6]{newstead2}.
\end{rem}

\begin{thm} \label{thm:SQ_Cremona-iso}
  Let $Q, Q' \subset \Grass_1( \PP^3)$ be two quadratic line complexes. Let
  \begin{equation*}
    f \colon \PP^3 \dashrightarrow \PP^3
  \end{equation*}
  be a birational map with $f^*( \beta_{Q'}) = \beta_Q$ in $\Br \CC( \PP^3)$.
  Then $f$ is a Cremona isomorphism from $S_Q$ to $S_{Q'}$.
  In particular, $S_Q \cong S_{Q'}$, and $A_Q \cong A_{Q'}$ as abelian surfaces.
\end{thm}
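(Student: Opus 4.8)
The plan is to reduce Theorem \ref{thm:SQ_Cremona-iso} immediately to the already-proven Theorem \ref{thm:main}. By Proposition \ref{prop:beta_Q=beta_SQ}, we have $\beta_Q = \beta_{S_Q}$ in $\Br( \PP^3 \setminus S_Q)$, and likewise $\beta_{Q'} = \beta_{S_{Q'}}$ in $\Br( \PP^3 \setminus S_{Q'})$. Both of these identifications persist after restriction to the generic point, so the hypothesis $f^*( \beta_{Q'}) = \beta_Q$ in $\Br \CC( \PP^3)$ is literally the same as the hypothesis $f^*( \beta_{S_{Q'}}) = \beta_{S_Q}$ in $\Br \CC( \PP^3)$.

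Therefore I would simply invoke Theorem \ref{thm:main} with $S = S_Q$ and $S' = S_{Q'}$: it yields that $f$ is a Cremona isomorphism from $S_Q$ to $S_{Q'}$, that $S_Q \cong S_{Q'}$ as algebraic surfaces, and that the associated abelian surfaces are isomorphic. The only point requiring a sentence is to note that the abelian surface associated with $S_Q$ in the sense of Theorem \ref{thm:main} (the $2$-sheeted cover realizing $S_Q$ as a quotient by $-\id$) coincides with the variety of lines $A_Q$; but this is exactly what was recorded in Section \ref{sec:quadratic}, namely that $\pi_Q \colon A_Q \to S_Q$ is the $2$-sheeted cover with $S_Q = A_Q/(-\id_{A_Q})$, and that $\alpha_Q = \alpha_{S_Q}$. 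Hence $A_Q \cong A_{Q'}$ as abelian surfaces follows from the corresponding statement in Theorem \ref{thm:main}.

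There is essentially no obstacle here — the substance was already carried out in the proof of Theorem \ref{thm:main}, and the role of this section was to identify the conic-bundle Brauer class $\beta_Q$ with the intrinsic Kummer-surface class $\beta_{S_Q}$, which Proposition \ref{prop:beta_Q=beta_SQ} did via the Gysin map and Nitsure's computation of the Brauer class of a conic bundle. The proof is thus a two-line deduction. If one wanted to be slightly more careful, one could remark that the identity $\beta_Q = \beta_{S_Q}$ takes place in $\Br( \PP^3 \setminus S_Q)$, which embeds into $\Br \CC( \PP^3)$ by \cite[Corollaire 1.10]{GBII}, so transporting the hypothesis to $\beta_{S_Q}$ is unambiguous.

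\begin{proof}
  By Proposition \ref{prop:beta_Q=beta_SQ}, we have $\beta_Q = \beta_{S_Q}$ in $\Br( \PP^3 \setminus S_Q)$
  and $\beta_{Q'} = \beta_{S_{Q'}}$ in $\Br( \PP^3 \setminus S_{Q'})$. Since the natural maps
  $\Br( \PP^3 \setminus S_Q) \to \Br \CC( \PP^3)$ and $\Br( \PP^3 \setminus S_{Q'}) \to \Br \CC( \PP^3)$
  are injective by \cite[Corollaire 1.10]{GBII}, these equalities also hold in $\Br \CC( \PP^3)$.
  Hence the hypothesis $f^*( \beta_{Q'}) = \beta_Q$ is equivalent to $f^*( \beta_{S_{Q'}}) = \beta_{S_Q}$
  in $\Br \CC( \PP^3)$. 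Applying Theorem \ref{thm:main} to the Kummer surfaces $S_Q$ and $S_{Q'}$,
  we conclude that $f$ is a Cremona isomorphism from $S_Q$ to $S_{Q'}$, that $S_Q \cong S_{Q'}$ as algebraic
  surfaces, and that the abelian surface associated with $S_Q$ is isomorphic to the one associated with $S_{Q'}$.
  As recalled in Section \ref{sec:quadratic}, the abelian surface associated with $S_Q$ is precisely the
  variety of lines $A_Q$, since $\pi_Q \colon A_Q \twoheadrightarrow S_Q$ is the $2$-sheeted cover exhibiting
  $S_Q$ as the quotient $A_Q/(-\id_{A_Q})$; similarly for $S_{Q'}$. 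Therefore $A_Q \cong A_{Q'}$ as abelian surfaces.
\end{proof}
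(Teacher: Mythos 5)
Your proof is correct and follows exactly the paper's route: the paper's own argument is the one-line reduction "by Proposition \ref{prop:beta_Q=beta_SQ}, this follows from Theorem \ref{thm:main}." The extra care you take (injectivity into $\Br \CC( \PP^3)$, identifying the cover $A_Q \twoheadrightarrow S_Q$ with the abelian surface of Theorem \ref{thm:main}) is sound but only makes explicit what the paper leaves implicit.
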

\begin{proof}
  Due to Proposition \ref{prop:beta_Q=beta_SQ}, this follows from Theorem \ref{thm:main}.
\end{proof}

\section{Moduli Spaces of Vector Bundles} \label{sec:moduli}
Let $C$ be a smooth projective curve of genus $g \geq 2$ over $\CC$. For an integer $r \geq 2$ and
a line bundle $L \in \Pic( C)$ of degree $d \in \ZZ$, we denote by
\begin{equation*}
  \M_C( r, L)
\end{equation*}
the coarse moduli space of S-equivalence classes of semistable vector bundles $E$ over $C$ of rank $r$
with determinant $\Lambda^r E \cong L$. We denote by
\begin{equation*}
  \M_C^{\stab}( r, L) \subseteq \M_C( r, L)
\end{equation*}
the open locus of stable vector bundles. The following facts are known,
see for example \cite[Th\'eor\`eme 17]{seshadri} and \cite[Theorem 1]{nararam2}:
\begin{thm} \label{moduli1}
  The moduli space $\M_C( r, L)$ is a normal irreducible projective variety of dimension $(r^2-1)(g-1)$ over $\CC$.
  The open locus $\M_C^{\stab}( r, L)$ is smooth, and coincides with the smooth locus of $\M_C( r, L)$
  except when $g = r = 2$ and $d$ is even.
\end{thm}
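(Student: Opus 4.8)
The plan is to establish Theorem \ref{moduli1} by assembling the standard construction of $\M_C(r,L)$ via geometric invariant theory, together with the known smoothness and dimension computations for the stable locus. First I would recall the GIT construction: after twisting by a sufficiently ample line bundle, every semistable bundle $E$ of rank $r$ and determinant $L$ becomes globally generated with vanishing higher cohomology, so it is a quotient of a fixed trivial bundle $\calO_C^N$; this realizes $\M_C(r,L)$ as a GIT quotient of a suitable locally closed subscheme of a Quot scheme $\Quot$ by the action of $\PGL$ (or $\SL$). From the general theory of such quotients (Seshadri, Narasimhan--Ramanan) one reads off that $\M_C(r,L)$ is a normal projective variety; irreducibility follows because the relevant open subset of the Quot scheme is irreducible, being a bundle over the irreducible parameter space of determinant-$L$ bundles. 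The projectivity is immediate since semistability is the GIT-stability condition and the quotient of a projective scheme is projective.

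Next I would address the dimension and smoothness of $\M_C^{\stab}(r,L)$. For a stable bundle $E$, deformation theory identifies the tangent space to the moduli functor with $H^1(C, \End^0 E)$, where $\End^0 E \subset \End E$ is the trace-zero part (the trivial-determinant condition cuts out the traceless endomorphisms), and the obstruction space is $H^2(C, \End^0 E) = 0$ since $C$ is a curve. Hence the stable locus is smooth, and its dimension equals $h^1(C, \End^0 E)$. Stability forces $H^0(C, \End^0 E) = 0$ (a nonzero traceless endomorphism of a stable bundle would have to be nilpotent, giving a destabilizing subsheaf), so by Riemann--Roch on the rank-$(r^2-1)$ bundle $\End^0 E$ of degree $0$ one gets $h^1(C, \End^0 E) = (r^2-1)(g-1)$. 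This gives the stated dimension.

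Finally I would pin down the comparison between $\M_C^{\stab}(r,L)$ and the smooth locus of $\M_C(r,L)$. One inclusion is clear: the stable locus is smooth, so it is contained in the smooth locus of the normal variety $\M_C(r,L)$. For the reverse, one must show that a strictly semistable (polystable) point is a singular point of $\M_C(r,L)$ unless $g=r=2$ and $d$ even; this is the delicate part and I would cite the Narasimhan--Ramanan analysis (\cite{nararam2}) of the local structure of the moduli space at polystable points via the Luna slice theorem, where the singularity is governed by a quadratic cone whose smoothness is exactly the exceptional numerical coincidence $(r^2-1)(g-1) = 3$, i.e. $g=r=2$.

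The main obstacle is this last step: the local analysis at strictly semistable points requires the étale slice description of the GIT quotient and a careful identification of when the resulting cone is smooth. I would lean on the cited references \cite{seshadri} and \cite{nararam2} for the precise statement rather than reproving it, since the construction and the singularity analysis are by now classical; the remaining parts (normality, irreducibility, projectivity from GIT, and the Riemann--Roch dimension count on the smooth locus) are routine and can be stated with brief justifications.
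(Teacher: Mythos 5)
The paper offers no proof of this theorem at all: it records it as a known fact, citing \cite{seshadri} and \cite{nararam2}, and your proposal ultimately rests on exactly the same citations for the only non-routine point, namely the local analysis at strictly semistable points and the exceptional case $g=r=2$, $d$ even. The routine parts you sketch (GIT construction giving normality, irreducibility and projectivity, deformation theory with tangent space $H^1(C,\End^0 E)$ and vanishing obstructions, and the Riemann--Roch dimension count) are correct, so this is essentially the paper's approach with standard details filled in.
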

The open locus of stable bundles comes equipped with a natural Brauer class
\begin{equation*}
  \beta_C \in \Br \M_C^{\stab}( r, L)
\end{equation*}
which can be described as follows, see \cite[Section 3]{KS} for more details:

Choosing an ample line bundle $\calO_C( 1)$ over $C$, and a sufficiently large integer $m$,
the coarse moduli space of stable vector bundles can be constructed as a GIT quotient
\begin{equation*}
  \M_C^{\stab}( r, L) = Q_{r, L}/\PGL_N
\end{equation*}
where $Q_{r, L}$ is a $\Quot$-scheme that parameterizes the vector bundles $E$ over $C$ in question
together with a basis of $H^0( C, E( m))$, and $N$ is the common dimension of the vector spaces $H^0( C, E( m))$,
so that $\GL_N$ acts on $Q_{r, L}$ by changing the chosen basis. Then the center of $\GL_N$ acts trivially,
the induced action of $\PGL_N$ is free, and the projection
\begin{equation*}
  Q_{r, L} \twoheadrightarrow \M_C^{\stab}( r, L)
\end{equation*}
is a principal $\PGL_N$-bundle. The group $\PGL_N$ acts on the space $\Mat_{N \times N}$ of quadratic matrices
by conjugation, and the associated fiber bundle
\begin{equation*}
  \Mat_{N \times N} \times^{\PGL_N} Q_{r, L} \twoheadrightarrow \M_C^{\stab}( r, L)
\end{equation*}
with fiber $\Mat_{N \times N}$ is an Azumaya algebra $\calA_{C, m}$. Its Brauer class $\beta_C := [\calA_{C, m}]$
depends neither on the choice of the ample line bundle $\calO_C( 1)$ nor on the integer $m$.

\begin{rem}
  Let $\calM_C( r, d)$ denote the moduli \emph{stack} of vector bundles $E$ of rank $r$ and degree $d$ over $C$.
  The condition $\det( E) \cong L$ defines a reduced closed substack
  \begin{equation*}
    \calM_C( r, L) \subset \calM_C( r, d),
  \end{equation*}
  which is an irreducible smooth Artin stack of dimension $(r^2-1)(g-1) - 1$ over $\CC$.
  The locus of stable vector bundles is a dense open substack
  \begin{equation*}
    \calM_C^{\stab}( r, L) \subset \calM_C( r, L),
  \end{equation*}
  which can be described in the above construction as the stack quotient $[Q_{r, L}/\GL_N]$. The universal property
  of the coarse moduli space $\M_C^{\stab}( r, L)$ provides a natural morphism
  \begin{equation*}
    \calM_C^{\stab}( r, L) \to \M_C^{\stab}( r, L),
  \end{equation*}
  which is a gerbe with band $\Gm$. The Brauer class $\beta_C$ is the class of this gerbe, see \cite{rationality}.
\end{rem}

\begin{rem}
  The Brauer class $\beta_C$ can also be interpreted as the obstruction against the existence of a Poincar\'{e}
  family (or universal family) of vector bundles over $C \times \M_C^{\stab}( r, L)$, see \cite[Section 3]{par}.
  Ramanan \cite[Theorem 2]{ramanan} proved that $\beta_C$ is non-zero if $\gcd( r, d) > 1$.

  More precisely, the order (also known as period or exponent) of the Brauer class $\beta_C$ is exactly $\gcd( r, d)$,
  see \cite[Proposition 5.1]{drezet-narasimhan} or \cite[Example 6.8]{par}. If $g \geq 3$ or $r \geq 3$, then
  $\beta_C$ generates the Brauer group of $\M_C^{\stab}( r, L)$, according to \cite[Theorem 1.8]{BBGN}.
\end{rem}

If the rank and the degree are coprime, that is $\gcd( r, d) = 1$, then the Brauer class $\beta_C$ vanishes.
Moreover, every semistable vector bundle is then stable, so we get:
\begin{thm}
  If $\gcd( r, d) = 1$, then $\M_C( r, L) = \M_C^{\stab}( r, L)$, and therefore the moduli space $\M_C(r,L)$
  is an irreducible smooth projective variety of dimension $(r^2-1)(g-1)$.
\end{thm}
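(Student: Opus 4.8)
The statement to prove is: if $\gcd(r,d)=1$, then $\M_C(r,L)=\M_C^{\stab}(r,L)$, and consequently $\M_C(r,L)$ is an irreducible smooth projective variety of dimension $(r^2-1)(g-1)$.

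The plan is to establish the set-theoretic equality $\M_C(r,L)=\M_C^{\stab}(r,L)$ first, and then read off the geometric consequences from Theorem \ref{moduli1}. For the equality, the key point is that when $\gcd(r,d)=1$, every semistable vector bundle of rank $r$ and degree $d$ over $C$ is automatically stable. Indeed, suppose $E$ is strictly semistable, so there is a proper nonzero subbundle $F \subset E$ with $\mu(F)=\mu(E)$, i.e.\ $\deg(F)/\operatorname{rk}(F) = d/r$. Writing $r' = \operatorname{rk}(F)$ and $d' = \deg(F)$ with $0 < r' < r$, this gives $d' r = d r'$; but since $\gcd(r,d)=1$, the integer $r$ must divide $r'$, which is impossible for $0 < r' < r$. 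Hence no such destabilizing subbundle exists, and $E$ is stable. Since $\M_C(r,L)$ parameterizes $S$-equivalence classes of semistable bundles and $\M_C^{\stab}(r,L)$ is the locus of (classes of) stable ones, this shows the two coincide as sets; as $\M_C^{\stab}(r,L)$ is by definition an open subvariety of $\M_C(r,L)$, the set-theoretic equality upgrades to an equality of schemes.

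Once this equality is in hand, the remaining assertions are immediate from the facts already collected. By Theorem \ref{moduli1}, $\M_C(r,L)$ is a normal irreducible projective variety of dimension $(r^2-1)(g-1)$, and its open stable locus $\M_C^{\stab}(r,L)$ is smooth; since the two now agree, $\M_C(r,L)$ itself is smooth, hence an irreducible smooth projective variety of the stated dimension. (One could also note that a smooth irreducible variety is automatically normal, so consistency with Theorem \ref{moduli1} is not an issue.)

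I do not anticipate a genuine obstacle here: the heart of the argument is the elementary divisibility observation that coprimality of rank and degree forbids a slope-preserving proper subbundle, which is a standard fact in the theory of semistable bundles. The only mild care needed is to be explicit that "every semistable bundle is stable" yields equality of the moduli \emph{schemes} and not merely of their underlying sets — but this follows at once because $\M_C^{\stab}(r,L)$ is defined as an open subscheme of $\M_C(r,L)$, so an equality of points forces it to be all of $\M_C(r,L)$. Everything else is a direct citation of Theorem \ref{moduli1}.
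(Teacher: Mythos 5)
Your proposal is correct and follows the same route the paper takes: the paper simply asserts that $\gcd(r,d)=1$ forces every semistable bundle to be stable and then invokes Theorem \ref{moduli1}, and your slope/divisibility computation just spells out that standard assertion. Note also that the exceptional case in Theorem \ref{moduli1} ($g=r=2$, $d$ even) cannot occur under the coprimality hypothesis, so the smoothness conclusion is indeed unconditional.
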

The following Torelli type theorem holds in the case $\gcd( r, d) = 1$ of coprime rank and degree,
see \cite[Corollary]{mumnew}, \cite[Theorem 1]{tjurin} or \cite[Theorem 3]{nararam1}:
\begin{thm} \label{torelli}
  If $C$ and $C'$ are smooth projective curves of genus $g \geq 2$ with
  \begin{equation*}
    \M_C( r, L) \cong \M_{C'}( r, L')
  \end{equation*}
  for some line bundles $L \in \Pic( C)$ and $L' \in \Pic( C')$ of degree $d$ with $\gcd( r, d) = 1$, then
  \begin{equation*}
    C \cong C'.
  \end{equation*}
\end{thm}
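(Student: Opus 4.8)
The plan is to reconstruct $(\Jac C, \Theta)$ from the variety $\M_C( r, L)$ by means of an intermediate Jacobian, and then to invoke the classical Torelli theorem for curves. Write $M := \M_C( r, L)$ and $M' := \M_{C'}( r, L')$; since $\gcd( r, d) = 1$, these are smooth irreducible projective varieties by the preceding theorem, and each carries a Poincar\'e (universal) bundle. First I would record the Hodge-theoretic input about $M$, all of which is contained in the classical cohomology computations for these moduli spaces (Newstead in rank $2$; Harder--Narasimhan, Atiyah--Bott and Desale--Ramanan in general): $M$ is unirational, hence simply connected, so $H^1( M, \ZZ) = 0$ and $H^0( M, \Omega_M^k) = 0$ for $k > 0$; the group $H^2( M, \ZZ)$ is free of rank $1$, generated by an ample class $h$; and $H^3( M, \ZZ)$ is free of rank $2g$ with $h^{3,0}( M) = 0$, hence $h^{2,1}( M) = h^{1,2}( M) = g$. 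Since $H^1( M) = 0$, the Lefschetz decomposition forces $H^3( M)$ to be entirely primitive, so the Hodge--Riemann relations show that the pairing $(a, b) \mapsto \int_M a \cup b \cup h^{\dim M - 3}$ makes
\begin{equation*}
  J^2( M) := H^3( M, \CC)/\bigl( F^2 H^3( M, \CC) + H^3( M, \ZZ)\bigr)
\end{equation*}
a principally polarized abelian variety of dimension $g$.

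The heart of the matter is to identify $J^2( M)$, with this polarization, with $(\Jac C, \Theta)$. Let $\mathcal{E}$ be a Poincar\'e bundle on $C \times M$; it is unique up to tensoring with a line bundle pulled back from $M$. Its Chern class $c_2( \mathcal{E}) \in H^4( C \times M, \ZZ)$ has, by the K\"unneth formula, a component in $H^1( C, \ZZ) \otimes H^3( M, \ZZ)$, and this component is unaffected by the twists just mentioned; it therefore defines a morphism of integral Hodge structures $H^1( C, \ZZ) \to H^3( M, \ZZ)( 1)$, equivalently a homomorphism of complex tori $\Jac( C) \to J^2( M)$. The assertion that this homomorphism is an isomorphism carrying $\Theta$ to the polarization on $J^2( M)$ constructed above is the main theorem of Mumford--Newstead in rank $2$, and of Tjurin and of Narasimhan--Ramanan in general; I would either quote it, or reprove it by pinning down the relevant graded piece of the cohomology ring of $M$, or by propagating the statement along a chain of Hecke correspondences relating $M$ to moduli spaces of lower rank and ultimately to $C$ itself. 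In any case, $J^2( M)$ is then canonically isomorphic to $(\Jac C, \Theta)$ as a principally polarized abelian variety.

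Granting this, the theorem follows formally. An isomorphism $\phi \colon M \to M'$ induces an isomorphism $\phi^* \colon H^3( M', \ZZ) \to H^3( M, \ZZ)$ of integral Hodge structures, and it carries the ample generator of $H^2( M', \ZZ)$ to the ample generator of $H^2( M, \ZZ)$, since an isomorphism preserves ampleness and each of these groups is infinite cyclic on an ample class. Hence $\phi^*$ respects the cup-product polarizations on $H^3$, so it induces an isomorphism $J^2( M') \to J^2( M)$ of principally polarized abelian varieties. Therefore $(\Jac C', \Theta') \cong (\Jac C, \Theta)$, and the classical Torelli theorem for curves yields $C \cong C'$.

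The main obstacle is the middle step: proving that the universal-bundle correspondence induces an \emph{isomorphism} of principally polarized abelian varieties $\Jac( C) \to J^2( M)$. This is exactly where the substantive work of Mumford--Newstead, Tjurin and Narasimhan--Ramanan lies, and it genuinely uses the fine structure of the cohomology of $M$ --- the vanishing $h^{3,0}( M) = 0$, the exact rank of $H^3( M)$, and the primitivity and correct normalization of the Chern-class component --- rather than any purely formal Hodge-theoretic bookkeeping. Everything around that step, including the concluding descent to $C \cong C'$ via Torelli for curves, is routine.
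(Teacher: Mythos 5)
Your proposal is correct and follows essentially the same route as the paper, which proves nothing itself but simply cites Mumford--Newstead, Tjurin and Narasimhan--Ramanan: your intermediate-Jacobian argument (identify $J^2(\M_C(r,L))$ with $(\Jac C, \Theta)$ via the K\"unneth component of $c_2$ of a Poincar\'e bundle, then apply classical Torelli) is precisely the content of those cited works, and you correctly isolate that identification as the step to be quoted rather than reproved. The surrounding bookkeeping in your write-up (twist-independence of the relevant K\"unneth component, preservation of the ample generator of $H^2$ and hence of the polarization under an isomorphism of the moduli spaces) is sound.
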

Furthermore the birational type of the moduli space is known, see \cite[Theorem 1.2]{KS}:
\begin{thm}
  If $\gcd( r, d) = 1$ then the moduli space $\M_C( r, L)$ is a rational variety.
\end{thm}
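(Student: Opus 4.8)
The plan is to follow the strategy of King and Schofield \cite{KS}: reduce, via an explicit chain of birational equivalences of ``mutation'' type, from $\M_C(r, L)$ to the moduli space of line bundles with prescribed determinant — which is a single point — so that $\M_C(r, L)$ ends up birational to an affine space, necessarily of dimension $(r^2-1)(g-1)$ by Theorem \ref{moduli1}. The hypothesis $\gcd(r, \deg L) = 1$ is exactly what makes this reduction run all the way down to rank $1$.

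First I would normalise the degree: tensoring by a line bundle $N$ identifies $\M_C(r, L)$ with $\M_C(r, L \otimes N^{\otimes r})$, so the determinant only matters modulo $r$-th powers in $\Pic(C)$, and since $\gcd(r, \deg L) = 1$ and $r \geq 2$ I may assume $0 < \deg L < r$. Then, for a fixed line bundle $M$ of suitable degree, the twist $E^\vee \otimes M$ is globally generated for a general stable bundle $E$ of rank $r$ with determinant $L$, and a general choice of $a$ of its sections realises $E$ as a subbundle
\begin{equation*}
  0 \longrightarrow E \longrightarrow M^{\oplus a} \longrightarrow Q \longrightarrow 0,
\end{equation*}
with $Q$ locally free of rank $a - r$ (a dual presentation would serve equally well). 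Choosing $a \equiv -\deg L \pmod r$ as small as possible forces $0 < \operatorname{rk} Q < r$, and — crucially — $\det Q \cong M^{\otimes a} \otimes L^{-1}$ is pinned down by $L$, so $Q$ varies only over a \emph{fixed-determinant} moduli space $\M_C(\operatorname{rk} Q,\, M^{\otimes a}\otimes L^{-1})$ of strictly smaller rank, and no copy of $\Jac(C)$ intervenes. Rereading the exact sequence as ``first choose $Q$, then choose the space of sections of $Q \otimes M^{-1}$ used to generate it'' identifies a dense open of $\M_C(r, L)$ with a dense open of a Grassmannian bundle over $\M_C(\operatorname{rk} Q,\, M^{\otimes a}\otimes L^{-1})$.

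One then checks that $\gcd(\operatorname{rk} Q, \deg\det Q) = \gcd(r, \deg L) = 1$ is preserved by this step — mirroring the Euclidean algorithm applied to $(r, \deg L)$ — so every intermediate moduli space again has vanishing Brauer class $\beta_C$, hence carries a Poincar\'e family; this makes the Grassmannian bundles above honest, Zariski-locally trivial bundles rather than twisted Brauer--Severi-type forms, so each step is a birational equivalence of $\M_C(r,L)$ with a product of a lower-rank fixed-determinant moduli space and an affine space. Since the rank strictly decreases at each step, the induction terminates at $\M_C(1, \Lambda) = \{\mathrm{pt}\}$, and $\M_C(r, L)$ is therefore birational to $\mathbb{A}^{(r^2-1)(g-1)}$, hence rational. (Had $\gcd(r, \deg L)$ been $h > 1$, the same reduction would stall at rank $h$, yielding only the weaker statement $\M_C(r, L) \sim_{\mathrm{bir}} \M_C(h, \Lambda) \times \mathbb{A}^{(r^2-h^2)(g-1)}$.)

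I expect the main obstacle to be the cohomological genericity underpinning the mutation step: one must verify, for the dimension-matching choices of $M$ and $a$, that a general $E$ has $E^\vee \otimes M$ globally generated, that $a$ general sections cut $E$ out as a subbundle with locally free quotient $Q$ of the predicted rank, that $Q \otimes M^{-1}$ has vanishing $H^1$ and is itself globally generated, and that the resulting correspondence is dominant with birationally trivial fibres over a dense open locus — and then to propagate the simultaneous control of determinants and of the vanishing Brauer obstruction through the entire induction, which is precisely where the coprimality hypothesis is genuinely used.
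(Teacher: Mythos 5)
The paper itself offers no proof of this statement: it is quoted directly from King--Schofield, \cite[Theorem 1.2]{KS}. So your proposal is in effect a reconstruction of their argument, and it has a genuine gap exactly where the real work lies. Your induction requires, at every stage, that the lower-rank fixed-determinant moduli space again have coprime rank and degree, so that its Brauer class vanishes, a Poincar\'e family exists, and your parameter space is an honest Zariski-locally trivial Grassmannian bundle. But this coprimality is not preserved by your step, and you have no freedom left to force it: for your correspondence to be \emph{birational} onto $\M_C(r,L)$ (rather than merely to dominate it, with positive-dimensional Grassmannian fibres $\mathrm{Gr}(a, h^0(E^{\dual}\otimes M))$ on the $E$-side, which would only give unirationality), you need $h^0(E^{\dual}\otimes M)=a$ generically; together with your choice $a=2r-\deg L$ this pins down $\deg M=g+1$. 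With these forced choices one has $\operatorname{rk}Q=r-\deg L$ and $\deg Q\equiv (\deg L)\cdot g \pmod{r-\deg L}$, hence $\gcd(\operatorname{rk}Q,\deg Q)=\gcd(r-\deg L,\,g)$, which is frequently not $1$: already for $g=2$, $r=3$, $\deg L$ of degree $1$, $\deg M=3$, the first step lands on $(\operatorname{rk}Q,\deg Q)=(2,14)$. There the Brauer class of the smaller moduli space has order $2$, no Poincar\'e family exists, your ``Grassmannian bundle'' is only a twisted form, and neither the rationality of its total space nor the inductive hypothesis for its base is available; the induction halts after one step. This is not a repairable bookkeeping slip: in this kind of Euclidean-algorithm descent one cannot in general keep the intermediate pairs coprime, and handling the resulting nontrivial Brauer obstructions at intermediate stages --- rather than assuming they vanish --- is precisely the technical heart of \cite{KS}, whose main result is the stronger statement that $\M_C(r,L)$ is birational to $\M_C(h,\calO_C)\times\mathbb{A}^{(r^2-h^2)(g-1)}$ for arbitrary $h=\gcd(r,d)$, with coprimality used only at the very end.

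Secondarily, the genericity statements you defer are not routine either: with $\deg M=g+1$ the slope of $E^{\dual}\otimes M$ is only slightly above $g$, so global generation with exactly $a$ sections and $h^1=0$ for the general stable $E$, stability of the general quotient $Q$, and the converse statement that the general stable $Q$ with general $V\subset H^0(Q\otimes M^{-1})$ yields a stable kernel, are Butler-type results that occupy a substantial part of the actual argument. As written, your text is a plausible outline of the right strategy, but the two points above --- the intermediate Brauer classes and the cohomological genericity --- are the proof, not details to be checked afterwards.
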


We now focus our attention on the first case with $\gcd( r, d) > 1$, namely on curves $C$ of genus $g = 2$
and vector bundles $E$ of rank $r = 2$ with trivial determinant $\Lambda^r E \cong \calO_C$.

\section{The case of genus 2 and rank 2, trivial determinant} \label{sec:genus2}
Let $C$ be a smooth projective curve of genus $2$ over $\CC$. In this case, the moduli space $\M_C( 2, \calO_C)$
can be described explicitly as follows; see \cite{nararam2} for more details.

The \emph{theta divisor} $\Theta \subset \Pic^1( C)$ is the image of the canonical map $C \to \Pic^1( C)$ that
sends each point $x \in C$ to the class of the line bundle $\calO_C( x)$.
Riemann-Roch implies that the linear system $|n \Theta|$ has dimension $n^2 - 1$ for each $n \geq 1$.
\begin{thm}[Narasimhan-Ramanan]
  There is a canonical isomorphism
  \begin{equation*}
    \M_C( 2, \calO_C) \longrightarrow |2\Theta| \cong \PP^3
  \end{equation*}
  which sends each class $[E]$ of a vector bundle $E$ to the generalized theta divisor $\Theta_E$
  supported at all points $[L] \in \Pic^1( C)$ with $H^0( C, E \otimes L \neq 0)$. 
\end{thm}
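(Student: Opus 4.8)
The plan is to realize $[E]\mapsto\Theta_E$ as a morphism into the fixed projective space $|2\Theta|$, show it is injective, and then use normality of $\M_C(2,\calO_C)$ to promote it to an isomorphism.

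\emph{Step 1 (the divisor $\Theta_E$).} A semistable $E$ of rank $2$ with $\Lambda^2E\cong\calO_C$ has degree $0$, so Riemann--Roch on the genus $2$ curve gives $\chi(E\otimes L)=\deg(E\otimes L)+2(1-2)=0$ for every $L\in\Pic^1(C)$; hence $h^0(E\otimes L)=h^1(E\otimes L)$, and $\Theta_E=\{[L]:h^0(E\otimes L)\neq0\}$ is the zero locus of the canonical section of the Knudsen--Mumford determinant of cohomology. It is a proper divisor: were $h^0(E\otimes L)\neq0$ for every $L$, then saturating the resulting inclusions $L^{-1}\hookrightarrow E$ would produce, for a two--dimensional family of $L$, a subline bundle of $E$ of slope $0$, contradicting semistability; alternatively one reads this off the decomposable bundle $E=M\oplus M^{-1}$, $M\in\Pic^0(C)$, for which $\Theta_E=(\Theta\otimes M^{-1})\cup(\Theta\otimes M)$, and transports it to all $E$ by upper semicontinuity of $h^0$ over the connected variety $\M_C(2,\calO_C)$ (Theorem \ref{moduli1}). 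Finally $\calO(\Theta_E)\cong\calO(2\Theta)$: for the decomposable bundle this is the theorem of the square, $t_a^*\calO(\Theta)\otimes t_{-a}^*\calO(\Theta)\cong\calO(2\Theta)$, and the fibrewise class of the determinant--of--cohomology line bundle is locally constant on the connected parameter space, hence always equal to $\calO(2\Theta)$.

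\emph{Step 2 (the morphism).} Let $Q$ be the Quot scheme of semistable quotients used in the GIT construction of Section \ref{sec:moduli}, with its tautological bundle $\mathcal{E}$ on $C\times Q$ and good quotient $Q\to\M_C(2,\calO_C)$; let $\mathcal{P}$ be a Poincar\'e bundle on $C\times\Pic^1(C)$. The determinant of cohomology of $\mathcal{E}\boxtimes\mathcal{P}$ along $C$ is a line bundle $\mathcal{L}$ on $Q\times\Pic^1(C)$ with a canonical section cutting out $\Theta_{\mathcal{E}_q}$ on each $\{q\}\times\Pic^1(C)$. By Step 1 and the seesaw principle, $\mathcal{L}\cong\pr_{\Pic}^*\calO(2\Theta)\otimes\pr_Q^*N$ for a line bundle $N$ on $Q$, so the canonical section defines a morphism $Q\to\PP\big(H^0(\Pic^1(C),\calO(2\Theta))\big)=|2\Theta|$. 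The central $\Gm\subset\GL_N$ acts on $\mathcal{L}$ with weight $\chi(E\otimes L)=0$, so this morphism is $\PGL_N$--invariant; and because the determinant of cohomology is additive in short exact sequences, $\Theta_E$ depends only on the graded object of $E$, so the morphism is constant on $S$--equivalence classes. Hence it descends to a morphism $\phi\colon\M_C(2,\calO_C)\to|2\Theta|\cong\PP^3$ sending $[E]$ to $\Theta_E$, and no choices enter, so $\phi$ is canonical.

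\emph{Step 3 (injectivity).} On the locus $S_C=\Pic^0(C)/\{\pm1\}$ of decomposable bundles $\phi$ sends $\{M^{\pm1}\}$ to $(\Theta\otimes M^{-1})+(\Theta\otimes M)$; as $\Theta$ is irreducible with trivial translation stabiliser (being a principal polarization on $\Jac(C)$), the unordered pair $\{M,M^{-1}\}$, hence the graded object, is recovered. On the stable locus, for $E$ stable and general $[L]\in\Theta_E$ one has $h^0(E\otimes L)=1$, and stability forces the inclusion $L^{-1}\hookrightarrow E$ to be a subbundle with quotient $L$ — a non--saturated inclusion would yield a slope--$0$ subbundle — so $E$ is a non--split extension $0\to L^{-1}\to E\to L\to0$, a point of $\PP\big(\operatorname{Ext}^1(L,L^{-1})\big)$; one then shows that the extension classes attached to the various $[L]\in\Theta_E$ determine $E$ (equivalently, via the Hecke correspondence, that $\Theta_E$ recovers the family of elementary modifications producing $E$). \textbf{This reconstruction of a stable bundle from its theta divisor is the technical heart of the argument, and the step I expect to be the main obstacle}; it is where the numerology peculiar to genus $2$ — namely $\dim|2\Theta|=3=\dim\M_C(2,\calO_C)$ and $\Theta\cong C$ — is essential.

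\emph{Step 4 (isomorphism).} So $\phi$ is an injective morphism from the normal, irreducible, projective $3$--fold $\M_C(2,\calO_C)$ (Theorem \ref{moduli1}) to $\PP^3$. Its image is a closed irreducible $3$--dimensional subvariety of $\PP^3$, hence all of $\PP^3$; thus $\phi$ is bijective and proper, therefore finite; being bijective in characteristic $0$ it is generically one--to--one, hence birational; and a finite birational morphism onto the normal variety $\PP^3$ is an isomorphism by Zariski's main theorem. This yields the desired canonical isomorphism $\M_C(2,\calO_C)\xrightarrow{\sim}|2\Theta|\cong\PP^3$.
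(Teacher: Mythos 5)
There is a genuine gap, and you have flagged it yourself: Step 3 never actually proves injectivity on the stable locus. Recovering a stable bundle $E$ from the divisor $\Theta_E$ is not a routine verification that can be left as ``one then shows that the extension classes attached to the various $[L]\in\Theta_E$ determine $E$'' --- it is the mathematical content of the theorem. Note that the paper itself offers no argument either: its proof is the citation to Narasimhan--Ramanan \cite[Theorem 2]{nararam2}, and the hard part of that paper is exactly this reconstruction step (for stable $E$ and $[L]\in\Theta_E$ with $h^0(E\otimes L)=1$ one gets a point of $\PP\bigl(\operatorname{Ext}^1(L,L^{-1})\bigr)\cong\PP^2$, and one must show that the resulting incidence data as $L$ varies over $\Theta_E$ pins down $E$ uniquely; this requires a genuine geometric analysis of the linear system $|K_C+2L|$ and of secants of $C$, not just the dimension coincidence $\dim|2\Theta|=3$). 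Without Step 3, Step 4 proves only that $\phi$ is a finite surjective morphism from the normal projective threefold $\M_C(2,\calO_C)$ onto $\PP^3$; a finite cover of degree greater than one is entirely consistent with everything you have established, so the conclusion does not follow.

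The surrounding steps are essentially fine, with two smaller caveats. In Step 1 and Step 2 you should work throughout with the determinantal (Knudsen--Mumford) divisor rather than the set $\{[L]:h^0(E\otimes L)\neq 0\}$: for a strictly semistable extension $0\to M\to E\to M^{-1}\to 0$ the set-theoretic locus can a priori be smaller than $\Theta_M+\Theta_{M^{-1}}$, whereas the determinantal divisor is additive and gives the S-equivalence invariance you need for descent to the coarse moduli space; you implicitly do this, but the statement of the theorem is phrased set-theoretically, so the identification of the two on the semistable locus deserves a sentence. In Step 1, the claim that $\Theta_E\neq\Pic^1(C)$ also needs the quantitative version of your saturation argument (a semistable rank-$2$, degree-$0$ bundle has at most a one-dimensional family of degree $-1$ line subbundles, so the evaluation locus cannot fill the surface $\Pic^1(C)$), or a reference to the standard base-point-freeness statement for generalized theta divisors; as written it is only a sketch. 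But these are repairable details --- the missing proof of injectivity on stable bundles is the real obstruction, and with the present write-up the theorem is not established.
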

\begin{proof}
  See \cite[Theorem 2]{nararam2}.
\end{proof}

So the moduli spaces $\M_C( 2, \calO_C)$ are all isomorphic to $\PP^3$, and hence smooth and rational.
Consequently, no Torelli type theorem in the form of Theorem \ref{torelli} holds for them.
This case also falls under the exception in Theorem \ref{moduli1}. Whereas the whole moduli space is smooth,
the open locus of stable vector bundles can be described as follows. 

Let $S_C$ denote the quotient of the Jacobian $A_C = \Pic^0( C)$ by the involution $-\id_{A_C}$.
The singular locus $S_C^{\sing} \subset S_C$ is the image of the $2$-torsion $A_C[2] \subset A_C$ and consists
of $16$ ordinary double points. One has a closed immersion
\begin{equation*}
  S_C \hookrightarrow \M_C( 2, \calO_C),
\end{equation*}
which is induced by the morphism $A_C \to \M_C( 2, \calO_C)$ that sends $[L]$ to $[L \oplus L^{-1}]$. According
to \cite[Proposition 6.3]{nararam2}, this turns $S_C$ into a Kummer surface in $\M_C( 2, \calO_C) \cong \PP^3$.
The open locus $\M_C^{\stab}( 2, \calO_C)$ of stable vector bundles is the complement of $S_C$ in $\M_C( 2, \calO_C)$.
\begin{rem}
  The Kummer surface $S_C \hookrightarrow \PP^3$ depends on the choice of an isomorphism $|2\Theta| \cong \PP^3$.
  We fix one such isomorphism in the sequel, and use it to identify $\M_C( 2, \calO_C)$ with $\PP^3$.
  A different choice would lead to a projectively equivalent Kummer surface in $\PP^3$.
\end{rem}

\begin{cor}
  The Brauer group of $\M_C^{\stab}( 2, \calO_C)$ is isomorphic to $(\ZZ/2)^5$.
\end{cor}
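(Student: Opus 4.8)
The plan is to deduce this directly from Corollary \ref{cor:Br}. By the Narasimhan--Ramanan theorem recalled above, the choice of an isomorphism $|2\Theta| \cong \PP^3$ identifies the moduli space $\M_C(2, \calO_C)$ with $\PP^3$, and under this identification the stable locus $\M_C^{\stab}(2, \calO_C)$ becomes the complement $\PP^3 \setminus S_C$ of the Kummer surface $S_C \subset \PP^3$; that $S_C$ is genuinely a Kummer surface in the sense of this paper, i.e.\ a quartic with exactly $16$ ordinary double points, is precisely \cite[Proposition 6.3]{nararam2}. So the first step is just to record this identification.

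The second step is to invoke Corollary \ref{cor:Br}, applied to the Kummer surface $S = S_C$: it yields $\Br(\PP^3 \setminus S_C) \cong (\ZZ/2)^5$, hence $\Br \M_C^{\stab}(2, \calO_C) \cong (\ZZ/2)^5$. Unwinding what underlies Corollary \ref{cor:Br}, this isomorphism comes from the Gysin map $\partial \colon \Br(\PP^3 \setminus S_C) \to H^1_{\et}(S_C^{\sm}, \QQ/\ZZ)$ of Proposition \ref{prop:gysin_iso}, combined with the computation $H^1_{\et}(S_C^{\sm}, \QQ/\ZZ) \cong (\ZZ/2)^5$ that was extracted from the fundamental group $\pi_1(S_C^{\sm})$ of the smooth locus of a Kummer surface.

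I do not expect any real obstacle here: everything has already been set up, and the corollary is a one-step consequence. The only point that deserves a word is that the identification of $\M_C^{\stab}(2, \calO_C)$ with $\PP^3 \setminus S_C$ depends on the chosen isomorphism $|2\Theta| \cong \PP^3$; but a different choice replaces $S_C$ by a projectively equivalent Kummer surface, so the resulting Brauer group is unchanged — indeed canonically identified, by the naturality statement following Definition \ref{def:beta_S}.
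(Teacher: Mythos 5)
Your proposal is correct and matches the paper's argument: both identify $\M_C^{\stab}(2,\calO_C)$ with the complement $\PP^3 \setminus S_C$ of the Kummer surface via the Narasimhan--Ramanan isomorphism and then apply Corollary \ref{cor:Br}. The extra remarks on the Gysin map and on independence of the choice of isomorphism $|2\Theta| \cong \PP^3$ are fine but not needed.
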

\begin{proof}
  Having just observed that the stable locus $\M_C^{\stab}( 2, \calO_C)$ is the open complement of
  a Kummer surface in $\M_C( 2, \calO_C) \cong \PP^3$, this now follows from Corollary \ref{cor:Br}.
\end{proof}

\begin{rem}
  In particular, the general description \cite[Theorem 1.8]{BBGN} of the Brauer group of $\M_C^{\stab}( r, L)$
  does not remain valid in the special case $g = r = 2$ and $d$ even.
\end{rem}

Since the curve $C$ has genus $2$, it is hyperelliptic, and its Weierstrass points are precisely the $6$ fixed
points of the hyperelliptic involution.
\begin{thm}[Newstead, Narasimhan-Ramanan] \label{thm:odd}
  If $x$ is a non-Weierstrass point on $C$, then there is a canonical isomorphism
  \begin{equation*}
    \M_C( 2, \calO_C( -x)) \longrightarrow Q_{C, x} \subset \Grass_1( \PP^3)
  \end{equation*}
  where $Q_{C, x}$ is a quadratic line complex in $\Grass_1( \PP^3)$.
  This isomorphism sends each class $[F]$ of a vector bundle $F$ to the line
  \begin{equation*}
    \ell_F := \{ [E] \in \M_C( 2, \calO_C) : \Hom( F, E) \neq 0\}.
  \end{equation*}
\end{thm}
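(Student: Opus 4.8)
The plan is to realize the assignment $[F] \mapsto \ell_F$ as a morphism of varieties, to show it is a closed immersion, and then to identify its image inside $\Grass_1( \PP^3)$. Since $\deg \calO_C( -x) = -1$ is coprime to the rank $2$, the moduli space $M := \M_C( 2, \calO_C( -x))$ is smooth projective of dimension $(2^2-1)(2-1) = 3$, every semistable bundle of this rank and determinant is already stable, and there is a universal bundle $\mathcal{F}$ on $C \times M$. The starting point is a local analysis: for a stable $F$ with $\Lambda^2 F \cong \calO_C( -x)$ and a semistable $E$ of rank $2$ with $\Lambda^2 E \cong \calO_C$, a nonzero homomorphism $\phi \colon F \to E$ has $\mu( F) = -\tfrac12 < 0 = \mu( E)$, and a slope argument shows that either the image of $\phi$ is a line bundle of degree $0$ — which forces $E$ to be strictly semistable, i.e.\ $[E] \in S_C$ — or $\phi$ is injective with cokernel $\CC_p$ for a single point $p \in C$; taking determinants in the latter case gives $\calO_C( p - x) \cong \calO_C$, hence $p = x$. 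So, away from the Kummer surface, $\ell_F$ consists precisely of the classes of nonsplit extensions $0 \to F \to E \to \CC_x \to 0$, which are parameterized by $\PP( \mathrm{Ext}^1( \CC_x, F))$; since $\mathrm{Ext}^1_{\calO_{C,x}}( \CC, \calO^2) \cong \CC^2$ this is a $\PP^1$, every such $E$ is locally free, and the slope argument shows $E$ is automatically semistable.

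The next step is to prove that the resulting morphism $\PP^1 = \PP( \mathrm{Ext}^1( \CC_x, F)) \to |2\Theta| = \PP^3$ has degree one, so that $\ell_F$ is a line and the morphism is a closed immersion onto it. This is a determinant-of-cohomology computation: on $C \times \PP^1$ there is a universal extension $0 \to F \boxtimes \calO_{\PP^1} \to \mathcal{E} \to \iota_*( \calO_{\PP^1}( \pm 1)) \to 0$ with $\iota \colon \{x\} \times \PP^1 \hookrightarrow C \times \PP^1$, and the ample generator $\calO_{|2\Theta|}( 1)$ pulls back to the determinant of the derived pushforward to $\PP^1$ of $\mathcal{E}$ twisted by a fixed degree-$(g-1)$ line bundle on $C$; the $F \boxtimes \calO_{\PP^1}$ term contributes a trivial line bundle, while the $\iota_*( \calO_{\PP^1}( \pm 1))$ term contributes $\calO_{\PP^1}( \pm 1)$ because $\iota$ is a section of $\PP^1 \times C \to \PP^1$. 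Hence the pullback has degree $1$. Performing this construction relatively over $M$ — using the $\PP^1$-bundle $\PP( \mathcal{F}|_{\{x\} \times M}) \to M$ and a universal Hecke modification over $C \times \PP( \mathcal{F}|_{\{x\} \times M})$ — produces, via the coarse moduli property of $\M_C( 2, \calO_C)$, a morphism $\PP( \mathcal{F}|_{\{x\} \times M}) \to \PP^3$ that is fibrewise a closed immersion onto a line, and hence the desired morphism $M \to \Grass_1( \PP^3)$, $[F] \mapsto \ell_F$.

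One then checks this morphism is a closed immersion. Injectivity amounts to recovering $F$ up to isomorphism from the line $\ell_F$, which is the usual reconstruction property of a Hecke correspondence: a general $E \in \ell_F$ is stable with $\mathrm{Hom}( F, E) = \CC$, exhibiting $F$ as the kernel of a surjection $E \to \CC_x$ that is singled out by a second general point of $\ell_F$. Immersivity is a deformation-theoretic computation: the differential at $[F]$ is a linear map from the $3$-dimensional tangent space $T_{[F]} M = H^1( C, \mathrm{ad}( F))$ to the $4$-dimensional $H^0( \ell_F, N_{\ell_F/\PP^3}) = H^0( \PP^1, \calO( 1)^{\oplus 2})$, and one verifies it is injective. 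As $M$ is projective, we obtain a closed immersion of $M$ onto an irreducible smooth threefold $Q_{C,x} \subset \Grass_1( \PP^3)$.

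It remains to see that $Q_{C,x}$ is a \emph{quadratic} line complex, and this is where the real work lies. Being a threefold in the four-dimensional $\Grass_1( \PP^3)$, whose Picard group is $\ZZ$ generated by the Plücker class $\calO( 1)$, the divisor $Q_{C,x}$ has class $\calO( k)$ for some $k \geq 1$; since $H^0( \Grass_1( \PP^3), \calO( 2))$ is the space of quadrics in $\PP^5$ modulo the Plücker quadric, the case $k = 2$ is exactly the case in which $Q_{C,x} = \Grass_1( \PP^3) \cap \{ q = 0\}$ for some quadric $q$, i.e.\ a quadratic line complex. To obtain $k = 2$ I would compare Fano indices: by Drezet--Narasimhan, $\M_C( 2, \calO_C( -x))$ has Picard group $\ZZ$ and anticanonical class twice the ample generator (index $2$); the Grassmannian $\Grass( 2, 4)$ has index $4$; and adjunction together with Grothendieck--Lefschetz shows a smooth divisor of class $\calO( k)$ in it has index $4 - k$. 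Hence $4 - k = 2$, so $k = 2$. (Alternatively one could invoke Newstead's theorem that $\M_C( 2, \calO_C( -x))$ is a smooth complete intersection of two quadrics in $\PP^5$ and check that under the above embedding one of the two quadrics is the Plücker quadric; the non-Weierstrass condition on $x$ enters in guaranteeing that $Q_{C,x}$ is smooth, hence a genuine quadratic line complex.) The main obstacle, then, is precisely the word ``quadratic'': constructing the morphism and showing it is a closed immersion onto a threefold in $\Grass_1( \PP^3)$ is essentially mechanical once the Hecke picture is in place, whereas pinning down that the image has degree $2$ in the Grassmannian — equivalently, controlling the Picard group and canonical class of $\M_C( 2, \calO_C( -x))$ — is the substantive point.
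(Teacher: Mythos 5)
The paper offers no argument for this theorem at all: its ``proof'' is the citation of Newstead's Theorem~1 and Narasimhan--Ramanan's Theorem~4, so your sketch is necessarily a different route, and in outline it is the standard modern one and essentially sound. The slope dichotomy for nonzero maps $F \to E$, the identification of $\ell_F$ away from $S_C$ with the Hecke modifications parameterized by $\PP( \mathrm{Ext}^1( \CC_x, F)) \cong \PP^1$, the determinant-of-cohomology computation showing the induced map $\PP^1 \to |2\Theta|$ has degree one (so $\ell_F$ is a line), and the relativization giving a morphism $\M_C( 2, \calO_C( -x)) \to \Grass_1( \PP^3)$ are all correct. Where you genuinely diverge from the cited proofs is the word ``quadratic'': Newstead exhibits $\M_C( 2, \calO_C( -x))$ explicitly as a smooth intersection of two quadrics in $\PP^5$ and Narasimhan--Ramanan identify one member of the pencil with the Pl\"ucker quadric, whereas you deduce the class $\calO( 2)$ abstractly from adjunction and Grothendieck--Lefschetz on the index-$4$ Grassmannian against the Drezet--Narasimhan statement that the moduli space is Fano of Picard rank $1$ and index $2$; this trades explicit equations for two heavy (and historically later) inputs, but it is a valid argument provided the image really is a smooth divisor. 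That proviso is also where your sketch is thinnest: the closed-immersion claim is asserted rather than proved --- your injectivity argument (``singled out by a second general point of $\ell_F$'') is close to circular, since distinguishing the down-modifications $\ker( E \to \CC_x)$, as $[E \to \CC_x]$ varies in $\PP( E_x^{\dual})$, by their lines is essentially the injectivity being claimed, and the injectivity of the differential into $H^0( \ell_F, N_{\ell_F/\PP^3})$ is only announced --- and you do not check that the set $\ell_F$ defined by the Hom condition agrees with the image line along $S_C$, where $\Hom( F, E) \neq 0$ is not a priori constant on S-equivalence classes. These gaps are fixable by standard arguments (the relative determinant-of-cohomology construction itself can be used for the first), so the proposal is best read as a correct alternative strategy with the routine verifications left out, in contrast to the paper, which simply outsources the whole statement to the literature.
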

\begin{proof}
  See \cite[Theorem 1]{newstead_odd} and \cite[Theorem 4]{nararam2}.
\end{proof}

The quadratic line complex $Q_{C, x}$ comes equipped with the incidence correspondence
\begin{equation*} \label{eq:hecke} \begin{tikzcd}
  \M_C( 2, \calO_C) \cong \PP^3 & I_{C, x} \arrow{l}[swap]{\pr_p} \arrow{r}{\pr_\ell}
    & Q_{C, x} \cong \M_C( 2, \calO_C( -x))
\end{tikzcd} \end{equation*} 
as recalled in Section \ref{sec:quadratic}. In terms of vector bundles, $I_{C, x}$ can be viewed  as a
\emph{Hecke correspondence}; see \cite[Section 4]{nararam1}. More precisely, $I_{C, x}$ parameterizes exact sequences
\begin{equation*}
  0 \longrightarrow F \longrightarrow E \longrightarrow \calO_x \longrightarrow 0
\end{equation*}
with $[F] \in \M_C( 2, \calO_C( -x))$ and $[E] \in \M_C( 2, \calO_C)$; here $\calO_x$ is the skyscraper sheaf at $x$.

As also recalled in Section \ref{sec:quadratic}, the quadratic line complex $Q_{C, x}$ defines a Kummer surface
\begin{equation*}
  S_{Q_{C, x}} \subset \M_C( 2, \calO_C) \cong \PP^3
\end{equation*}
and a $2$-sheeted covering $A_{Q_{C, x}} \twoheadrightarrow S_{Q_{C, x}}$ of it by an abelian surface,
namely by the variety
\begin{equation*}
  A_{Q_{C, x}} \subset \Grass_1( \PP^5)
\end{equation*}
of lines in $Q_{C, x}$. Actually, $S_{Q_{C, x}} = S_C$ due to \cite[Lemma 13.2]{nararam2},
and their $2$-sheeted coverings $A_{Q_{C, x}}$ and $A_C$ are canonically isomorphic by \cite[Theorem 5]{nararam2}.

In particular, the complement of $S_{Q_{C, x}}$ in $\M_C( 2, \calO_C)$ is again
the stable locus $\M_C^{\stab}( 2, \calO_C)$.

\begin{prop} \label{prop:beta_QC=beta_C}
  Let $C$ have genus $2$, and let $x$ be a non-Weierstrass point on $C$. Then
  \begin{equation*}
    \beta_{Q_{C, x}} = \beta_C \in \Br \M_C^{\stab}( 2, \calO_C)
  \end{equation*}
  where $\beta_{Q_{C, x}}$ is given by Definition \ref{def:beta_Q},
  and $\beta_C$ is the class defined in Section \ref{sec:moduli}.
\end{prop}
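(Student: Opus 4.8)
The plan is to identify the conic bundle $\pr_p \colon I_{C,x} \to \PP^3$ defining $\beta_{Q_{C,x}}$ with the Azumaya algebra $\calA_{C,m}$ (or rather, with the $\PGL$-bundle giving $\beta_C$) over the stable locus $\M_C^{\stab}(2,\calO_C)$, at least up to Brauer equivalence. The key point is that the Hecke correspondence $I_{C,x}$ has a modular interpretation: over a point $[E] \in \M_C^{\stab}(2,\calO_C)$, the fibre $\pr_p^{-1}([E])$ parameterizes exact sequences $0 \to F \to E \to \calO_x \to 0$, i.e. it is the projectivization $\PP(E_x^\vee)$ of the fibre of $E$ at the point $x$ (a surjection $E \twoheadrightarrow \calO_x$ is the same as a line in $E_x^\vee$, up to scalar). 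So the conic bundle $\pr_p$ restricted to the stable locus is the $\PP^1$-bundle $\PP(\calE_x^\vee)$, where $\calE$ is a (hypothetical) universal bundle on $C \times \M_C^{\stab}(2,\calO_C)$ and $\calE_x$ is its restriction to $\{x\} \times \M_C^{\stab}(2,\calO_C)$.

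First I would make precise the statement that $\pr_p$ over the stable locus is $\PP(\calE_x^\vee)$ in the twisted sense: although no genuine universal bundle $\calE$ exists (that is exactly what $\beta_C$ obstructs), there is a universal bundle over the $\Gm$-gerbe $\calM_C^{\stab}(2,\calO_C)$, equivalently an $\calA_{C,m}$-twisted sheaf; restricting it to $\{x\}$ and projectivizing gives a genuine (untwisted) $\PP^1$-bundle on $\M_C^{\stab}(2,\calO_C)$ whose Brauer class is the image of $\beta_C$ under multiplication by $\mathrm{rk} = 2$ followed by\ldots{} — here one must be careful: the Brauer class of $\PP(\calV)$ for a twisted sheaf $\calV$ of class $\gamma$ is $\mathrm{rk}(\calV) \cdot \gamma$ in general, but since $\beta_C$ has order $2$ and $\mathrm{rk}(\calE_x) = 2$, one instead uses that the class of $\PP(\calE_x^\vee)$ equals the class of $\calE_x$ as a twisted sheaf, which is $\beta_C$ itself (the obstruction to untwisting $\calE_x$ is the same gerbe class). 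Second, I would match this with the abstract construction in Section~\ref{sec:moduli}: the Azumaya algebra $\calA_{C,m}$ is $\calEnd$ of the twisted universal bundle in the large-$m$ presentation, so $[\calA_{C,m}] = \beta_C$ tautologically, and $\beta_{Q_{C,x}} = [\PP(\calE_x^\vee)] = \beta_C$ follows from the identification of the two twisted bundles (they differ by tensoring with a twisted line bundle coming from the $H^0(C, E(m))$ rigidification, which does not change the $\PGL$-class). Third, since by Proposition~\ref{prop:gysin_iso} the Gysin map $\partial$ is injective, it suffices to check the equality after applying $\partial$, i.e. to check that both classes have the same ramification $\alpha_{S_C}$ along $S_C$; this is already recorded for $\beta_{Q_{C,x}}$ in Proposition~\ref{prop:beta_Q=beta_SQ} (giving $\alpha_{Q_{C,x}} = \alpha_{S_C}$), so the task reduces to computing the ramification of $\beta_C$ along $S_C$ and checking it is also $\alpha_{S_C}$ — equivalently, that the double cover of $S_C$ determined by $\beta_C$ is the abelian surface $A_C$.

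The main obstacle I expect is precisely this last ramification computation for $\beta_C$: one must understand how the twisted universal bundle on $\M_C^{\stab}(2,\calO_C)$ behaves as one approaches the Kummer surface $S_C$, where the stable bundles degenerate to the polystable bundles $L \oplus L^{-1}$. The cleanest route is probably to avoid a direct local computation and instead argue by the injectivity of $\partial$ together with an identification of the \emph{restriction} of $\beta_C$ to a punctured neighbourhood, or alternatively to transport the whole question to the quadratic-line-complex side using the canonical isomorphisms $Q_{C,x} \cong \M_C(2,\calO_C(-x))$ and $A_{Q_{C,x}} \cong A_C$ already cited from \cite{nararam2}, so that $\beta_{Q_{C,x}} = \beta_{S_{Q_{C,x}}} = \beta_{S_C}$ and it only remains to prove $\beta_C = \beta_{S_C}$; this in turn follows once one knows $\partial(\beta_C) = \alpha_{S_C}$. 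I would therefore structure the proof as: (1) realize $\pr_p$ over the stable locus as the projectivized restriction $\PP(\calE_x^\vee)$ of the twisted universal bundle; (2) conclude $[\PP(\calE_x^\vee)] = \beta_C$ from the gerbe description of $\beta_C$; (3) invoke Proposition~\ref{prop:beta_Q=beta_SQ} to get $\beta_{Q_{C,x}} = [\PP(\calE_x^\vee)]$, hence $\beta_{Q_{C,x}} = \beta_C$. Steps (1) and (3) are the Hecke-correspondence identification and are essentially in \cite{nararam1, nararam2}; step (2) is where one pays attention to twisted-sheaf bookkeeping, and is the part I would write out most carefully.
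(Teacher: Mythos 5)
Your proposal is correct and follows essentially the same route as the paper: the Hecke-correspondence identification of $\pr_p$ over $\M_C^{\stab}(2,\calO_C)$ with the projectivization of the (weight-one, twisted) universal bundle restricted to $x$, followed by the observation that this bundle and $H^0(C,E(m))$ carry the same $\Gm$-weight, so their endomorphism algebras have the same Brauer class $\beta_C$ --- the paper packages your gerbe-theoretic step (2) as an explicit Morita bimodule with fibres $E_x^{\dual}\otimes H^0(C,E(m))$. One small caveat: your parenthetical ``differ by tensoring with a twisted line bundle'' is not literally correct (the ranks differ), but the statement you actually need, and also give, is simply that both are weight-one twisted sheaves on the same $\Gm$-gerbe, so the Hom-bundle between them is untwisted.
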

\begin{proof}
  Viewing the incidence correspondence $I_{C, x}$ as a Hecke correspondence in the above sense,
  we can identify the smooth conic bundle $\pr_p$ over $\M_C^{\stab}( 2, \calO_C)$ with the $\PP^1$-bundle
  whose fiber over a point $[E]$ is the projectivization $\PP E_x$ of the fiber $E_x$.

  Thus the Brauer class $\beta_{Q_{C, x}}$ of this smooth conic bundle $\pr_p$ is also the Brauer class of
  the Azumaya algebra $\calA_{C, x}$ over $\M_C^{\stab}( 2, \calO_C)$ whose fiber over a point $[E]$ is $\End( E_x)$.

  By construction, $\beta_C$ is the Brauer class of the Azumaya algebra $\calA_{C, m}$
  over $\M_C^{\stab}( 2, \calO_C)$ whose fiber over a point $[E]$ is $\End H^0( C, E( m))$.

  Because $\Aut( E) = \Gm$ acts with the same weight one on both $E_x$ and $H^0( C, E( m))$,
  there is a vector bundle $\calV$ over $\M_C^{\stab}( 2, \calO_C)$ whose fiber over a point $[E]$ is
  \begin{equation*}
    \calV_{[E]} := E_x^{\dual} \otimes H^0( C, E( m)).
  \end{equation*}
  By construction, $\calV$ is a bimodule that provides a Morita equivalence between the Azumaya algebras
  $\calA_{C, x}$ and $\calA_{C, m}$. Hence their Brauer classes $\beta_{Q_{C, x}}$ and $\beta_C$ are equal.
\end{proof}

\begin{thm} \label{thm:SC_Cremona-iso}
  Let $C$ and $C'$ be two smooth projective curves of genus $2$ over $\CC$. Let
  \begin{equation*}
    f \colon \M_C( 2, \calO_C) \dashrightarrow \M_{C'}( 2, \calO_{C'})
  \end{equation*}
  be a birational map $f^*( \beta_{C'}) = \beta_C$. Then $f$ is a Cremona isomorphism from $S_C$ to $S_{C'}$.
  In particular, $S_C \cong S_{C'}$, and the Jacobian $A_C$ is isomorphic to $A_{C'}$ as an abelian variety.
\end{thm}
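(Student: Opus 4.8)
The plan is to reduce Theorem \ref{thm:SC_Cremona-iso} to Theorem \ref{thm:SQ_Cremona-iso} (equivalently, to Theorem \ref{thm:main}) by translating the Brauer class $\beta_C$ on $\M_C^{\stab}(2,\calO_C)$ into the Brauer class $\beta_{Q_{C,x}}$ attached to a quadratic line complex, and similarly for $C'$. Concretely, first I would choose non-Weierstrass points $x\in C$ and $x'\in C'$, which exist since each curve has only $6$ Weierstrass points. By Theorem \ref{thm:odd}, we get identifications $\M_C(2,\calO_C)\cong\PP^3$ and $\M_{C'}(2,\calO_{C'})\cong\PP^3$ together with quadratic line complexes $Q_{C,x}$ and $Q_{C',x'}$ in $\Grass_1(\PP^3)$ whose associated Kummer surfaces are $S_C$ and $S_{C'}$, respectively.

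The key step is Proposition \ref{prop:beta_QC=beta_C}, which identifies $\beta_{Q_{C,x}}$ with $\beta_C$ in $\Br\M_C^{\stab}(2,\calO_C)$, and likewise $\beta_{Q_{C',x'}}=\beta_{C'}$. Passing to the generic point via the embedding $\Br\M_C^{\stab}(2,\calO_C)\hookrightarrow\Br\CC(\PP^3)$ (as in the remark following Corollary \ref{cor:S_projectively_equivalent_to_S'}), the hypothesis $f^*(\beta_{C'})=\beta_C$ becomes $f^*(\beta_{Q_{C',x'}})=\beta_{Q_{C,x}}$ in $\Br\CC(\PP^3)$, where now $f\colon\PP^3\dashrightarrow\PP^3$ is a birational map. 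Theorem \ref{thm:SQ_Cremona-iso} then applies directly and yields that $f$ is a Cremona isomorphism from $S_{Q_{C,x}}=S_C$ to $S_{Q_{C',x'}}=S_{C'}$, so in particular $S_C\cong S_{C'}$ and the abelian surfaces $A_{Q_{C,x}}\cong A_{Q_{C',x'}}$ are isomorphic. Since $A_{Q_{C,x}}$ is canonically isomorphic to the Jacobian $A_C$ (by \cite[Theorem 5]{nararam2}), and similarly $A_{Q_{C',x'}}\cong A_{C'}$, we conclude $A_C\cong A_{C'}$ as abelian surfaces.

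I expect the only real subtlety is a bookkeeping one: making sure the identifications of $\M_C(2,\calO_C)$ with $\PP^3$ used to define $\beta_C$ as a class in $\Br(\PP^3\setminus S_C)$ and those coming from Theorem \ref{thm:odd} are compatible, so that the statement $\beta_{Q_{C,x}}=\beta_C$ of Proposition \ref{prop:beta_QC=beta_C} can be used verbatim. Since the Brauer class and the Kummer surface are both independent of the choice of such an isomorphism up to projective equivalence (and the Kummer surface is intrinsically $S_C$ by \cite[Lemma 13.2]{nararam2}), there is no genuine obstacle here. Finally, one should note that the conclusion $A_C\cong A_{C'}$ is exactly an isomorphism of principally polarized abelian varieties only after using Remark \ref{rem:S_determines_A}, but for the stated theorem the isomorphism as abelian surfaces suffices; the sharper statement about Jacobians as \emph{polarized} abelian varieties, and the descent to $C\cong C'$ under the no-real-multiplication hypothesis, is then handled exactly as in Corollary \ref{cor:A_no_real_mult} together with the classical Torelli theorem for genus-$2$ curves, which recovers $C$ from the principally polarized Jacobian.
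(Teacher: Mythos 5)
Your argument is exactly the paper's: invoke Proposition \ref{prop:beta_QC=beta_C} (choosing a non-Weierstrass point) to identify $\beta_C$ with $\beta_{Q_{C,x}}$, then apply Theorem \ref{thm:SQ_Cremona-iso}, using $S_{Q_{C,x}}=S_C$ and $A_{Q_{C,x}}\cong A_C$ as recorded before the theorem. The proposal is correct and essentially identical to the paper's proof, just spelled out in more detail.
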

\begin{proof}
  Due to Proposition \ref{prop:beta_QC=beta_C}, this follows from Theorem \ref{thm:SQ_Cremona-iso}.
\end{proof}

\begin{rem}
  Let $\theta_C$ denote the canonical principal polarization on the Jacobian $A_C$.
  Although $A_C \cong A_{C'}$ as abelian varieties, it is not clear at this point whether
  \begin{equation*}
    (A_C, \theta_C) \cong (A_{C'}, \theta_{C'})
  \end{equation*}
  as polarized abelian varieties. If the latter holds, then $C \cong C'$ according to
  the classical Torelli theorem, see for example \cite[Theorem 11.1.7]{BL}.
\end{rem}

\begin{cor} \label{cor:AC_no_real_mult}
  In the situation of Theorem \ref{thm:SC_Cremona-iso}, suppose moreover that the common Jacobian $A_C \cong A_{C'}$
  does not admit real multiplication. Then $C$ and $C'$ are isomorphic.
\end{cor}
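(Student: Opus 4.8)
The plan is to combine Theorem \ref{thm:SC_Cremona-iso} with the rigidity of principal polarizations on an abelian surface without real multiplication, and then invoke the classical Torelli theorem. First I would apply Theorem \ref{thm:SC_Cremona-iso} to the hypothesis $f^*(\beta_{C'}) = \beta_C$: it yields an isomorphism of abelian surfaces $A_C \cong A_{C'}$. This isomorphism is not a priori compatible with the canonical principal polarizations $\theta_C$ on $A_C$ and $\theta_{C'}$ on $A_{C'}$, but transporting $\theta_{C'}$ along it gives a second principal polarization on $A_C$, which we must reconcile with $\theta_C$.

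The key step is then Lemma \ref{lem:theta_unique}: if an abelian surface carries two distinct principal polarizations, it admits real multiplication. By the standing assumption that $A_C \cong A_{C'}$ does not admit real multiplication, the contrapositive forces the two principal polarizations on $A_C$ to coincide in $\NS(A_C)$. Hence the chosen isomorphism $A_C \to A_{C'}$ (or its composition with a suitable translation, as in Remark \ref{rem:S_determines_A}, though here the lift from Theorem \ref{thm:SC_Cremona-iso} is already a homomorphism) identifies $(A_C, \theta_C)$ with $(A_{C'}, \theta_{C'})$ as principally polarized abelian surfaces. In other words, the birational datum upgrades from an abstract isomorphism of Jacobians to an isomorphism of polarized Jacobians.

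Finally I would feed this into the classical Torelli theorem for curves of genus $2$ (see for example \cite[Theorem 11.1.7]{BL}): an isomorphism of principally polarized Jacobians $(A_C, \theta_C) \cong (A_{C'}, \theta_{C'})$ implies $C \cong C'$, since for a curve of genus $2$ the Jacobian with its theta polarization is indecomposable. The main obstacle, in spirit, is ruling out a genuine failure of polarization-compatibility, but this is exactly what Lemma \ref{lem:theta_unique} handles once the real multiplication hypothesis is in place; with that lemma available the argument is essentially a short assembly of earlier results, and no delicate new computation is required.
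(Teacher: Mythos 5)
Your proposal is correct and follows exactly the paper's own route: apply Theorem \ref{thm:SC_Cremona-iso} to get $A_C \cong A_{C'}$ as abelian varieties, use Lemma \ref{lem:theta_unique} (via the no-real-multiplication hypothesis) to force the transported polarization to agree with $\theta_C$, and conclude $C \cong C'$ by the classical Torelli theorem. The paper's proof is precisely this assembly, so nothing further is needed.
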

\begin{proof}
  Combine Theorem \ref{thm:SC_Cremona-iso} with Lemma \ref{lem:theta_unique}
  and with the classical Torelli theorem.
\end{proof}

\begin{rem} \label{rem:stacks}
  Studying the birational type of the pair $( \M_C( 2, \calO_C), \beta_C)$ is equivalent to studying the birational
  type of the moduli stack $\calM_C( 2, \calO_C)$, because $\beta_C$ is the class of the $\Gm$-gerbe
  $\calM_C^{\stab}( 2, \calO_C) \twoheadrightarrow \M_C^{\stab}( 2, \calO_C)$.
  Indeed, a birational map
  \begin{equation} \label{eq:M-->M}
    f \colon \M_C( 2, \calO_C) \dashrightarrow \M_{C'}( 2, \calO_{C'})
  \end{equation}
  with $f^*( \beta_{C'}) = \beta_C$ is the same thing as a birational $1$-morphism
  \begin{equation} \label{calM-->calM}
    g \colon \calM_C( 2, \calO_C) \dashrightarrow \calM_{C'}( 2, \calO_{C'})
  \end{equation}
  of weight $1$. Conversely, each birational $1$-morphism $g$ as in \eqref{calM-->calM} has weight $1$ or weight $-1$
  by \cite[Lemma 3.8]{par}. Composing $g$ with the automorphism $E \mapsto E^{\dual}$ of $\calM_C( 2, \calO_C)$
  described in \cite[Example 3.7]{par} if necessary, we may assume without loss of generality that $g$ has weight $1$.
  Then $g$ induces a birational map $f$ as in \eqref{eq:M-->M} with $f^*( \beta_{C'}) = \beta_C$.
\end{rem}
 
\begin{cor} \label{cor:finitely_many_C}
  Fix a smooth projective curve $C$ of genus $2$ over $\CC$. Up to isomorphism, there are
  only finitely many smooth projective curves $C'$ of genus $2$ over $\CC$ such that
  the moduli stack $\calM_{C'}( 2, \calO_{C'})$ is birational to the moduli stack $\calM_C( 2, \calO_C)$.
\end{cor}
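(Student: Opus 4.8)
The plan is to reduce Corollary \ref{cor:finitely_many_C} to the already-established results of the previous section, exactly as Corollary \ref{cor:finitely_many_S} was deduced from Theorem \ref{thm:main}. First I would invoke Remark \ref{rem:stacks}: a birational equivalence of the moduli stacks $\calM_{C'}( 2, \calO_{C'})$ and $\calM_C( 2, \calO_C)$ is the same datum (after possibly composing with the duality automorphism $E \mapsto E^{\dual}$, which has weight $-1$) as a birational map $f \colon \M_C( 2, \calO_C) \dashrightarrow \M_{C'}( 2, \calO_{C'})$ with $f^*( \beta_{C'}) = \beta_C$. Hence the hypothesis of the corollary is precisely the hypothesis of Theorem \ref{thm:SC_Cremona-iso}, and that theorem yields an isomorphism of abelian varieties $A_C \cong A_{C'}$ between the Jacobians.

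Next I would upgrade this to a statement about polarized abelian varieties using finiteness. By the classical Torelli theorem, a genus-$2$ curve $C'$ is determined up to isomorphism by its canonically polarized Jacobian $(A_{C'}, \theta_{C'})$; moreover every principal polarization on a $2$-dimensional abelian variety is of the form $\theta_{C'}$ for a (possibly reducible, but in our setting the Jacobian of a smooth genus-$2$ curve is irreducible) curve, or more simply: distinct genus-$2$ curves with isomorphic Jacobians as abelian varieties correspond to distinct principal polarizations up to automorphism of the fixed abelian variety $A_C$. By the Narasimhan--Nori theorem \cite[Theorem 1.1]{naranori}, the fixed abelian variety $A_C$ carries only finitely many principal polarizations up to automorphisms of $A_C$. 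Therefore only finitely many isomorphism classes of genus-$2$ curves $C'$ can have $A_{C'} \cong A_C$ as abelian varieties, and a fortiori only finitely many can satisfy the stronger birational-stack hypothesis.

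Putting these two steps together gives the corollary: the set of $C'$ with $\calM_{C'}( 2, \calO_{C'})$ birational to $\calM_C( 2, \calO_C)$ is contained in the set of $C'$ with $A_{C'} \cong A_C$ as abelian varieties (by Theorem \ref{thm:SC_Cremona-iso} via Remark \ref{rem:stacks}), and the latter set is finite (by Torelli plus Narasimhan--Nori).

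I do not expect a genuine obstacle here; the argument is a formal assembly of results already in place. The only point requiring a little care is the passage from ``isomorphic as abelian varieties'' to ``finitely many curves'': one must note that a genus-$2$ curve is recovered from its Jacobian \emph{with} its canonical principal polarization, so that the fibre of the Torelli map over a fixed abelian variety $A_C$ injects into the set of principal polarizations of $A_C$ modulo $\Aut( A_C)$, which is where Narasimhan--Nori applies. This is the analogue of combining Theorem \ref{thm:main} with Corollary \ref{cor:S_projectively_equivalent_to_S'} in the proof of Corollary \ref{cor:finitely_many_S}, with the Torelli theorem playing the role that Corollary \ref{cor:S_projectively_equivalent_to_S'} played there.
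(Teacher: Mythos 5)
Your proposal is correct and follows essentially the same route as the paper: apply Theorem \ref{thm:SC_Cremona-iso} (via Remark \ref{rem:stacks}) to get $A_{C'} \cong A_C$, then combine the Narasimhan--Nori finiteness of principal polarizations with the classical Torelli theorem, exactly as in the proof of Corollary \ref{cor:finitely_many_S}. No gaps.
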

\begin{proof}
  Using \cite[Theorem 1.1]{naranori} as in the proof of Corollary \ref{cor:finitely_many_S},
  this follows from Theorem \ref{thm:SC_Cremona-iso} together with the classical Torelli theorem.
\end{proof}

\bibliographystyle{plain}
\bibliography{Artikel}

\begin{thebibliography}{10}

\bibitem{weak}
D.~Abramovich, K.~Karu, K.~Matsuki, and J.~W{\l}odarczyk.
\newblock Torification and factorization of birational maps.
\newblock {\em J. Amer. Math. Soc.}, 15(3):531--572, 2002.

\bibitem{sga4}
M.~{Artin}, A.~{Grothendieck}, and J.~L. {Verdier}.
\newblock {\em Th\'eorie des topos et cohomologie \'etale des sch\'emas (SGA
  4)}.
\newblock Springer, Cham, 1972/73.
\newblock Lecture Notes in Mathematics Vols. 269, 270, and 305.

\bibitem{BBGN}
V.~Balaji, I.~Biswas, O.~Gabber, and D.~Nagaraj.
\newblock Brauer obstruction for a universal vector bundle.
\newblock {\em C. R. Math. Acad. Sci. Paris}, 345(5):265--268, 2007.

\bibitem{BL}
C.~Birkenhake and H.~Lange.
\newblock {\em Complex abelian varieties}.
\newblock Springer-Verlag, Berlin, second edition, 2004.

\bibitem{bright}
M.~Bright.
\newblock Bad reduction of the {B}rauer-{M}anin obstruction.
\newblock {\em J. Lond. Math. Soc. (2)}, 91(3):643--666, 2015.

\bibitem{BKT}
E.~Brussel, K.~McKinnie, and E.~Tengan.
\newblock Indecomposable and noncrossed product division algebras over function
  fields of smooth {$p$}-adic curves.
\newblock {\em Adv. Math.}, 226(5):4316--4337, 2011.

\bibitem{colliot}
J.-L. Colliot-Th\'el\`ene.
\newblock Birational invariants, purity and the {G}ersten conjecture.
\newblock In {\em {$K$}-theory and algebraic geometry: connections with
  quadratic forms and division algebras ({S}anta {B}arbara, {CA}, 1992)},
  volume~58 of {\em Proc. Sympos. Pure Math.}, pages 1--64. Amer. Math. Soc.,
  Providence, RI, 1995.

\bibitem{debarre}
O.~Debarre.
\newblock Degrees of curves in abelian varieties.
\newblock {\em Bull. Soc. Math. France}, 122(3):343--361, 1994.

\bibitem{drezet-narasimhan}
J.-M. Drezet and M.~S. Narasimhan.
\newblock Groupe de {P}icard des vari\'{e}t\'{e}s de modules de fibr\'{e}s
  semi-stables sur les courbes alg\'{e}briques.
\newblock {\em Invent. Math.}, 97(1):53--94, 1989.

\bibitem{gabber}
O.~Gabber.
\newblock Some theorems on {A}zumaya algebras.
\newblock In {\em The {B}rauer group ({S}em., {L}es {P}lans-sur-{B}ex, 1980)},
  volume 844 of {\em Lecture Notes in Math.}, pages 129--209. Springer,
  Berlin-New York, 1981.

\bibitem{griffiths-harris}
P.~Griffiths and J.~Harris.
\newblock {\em Principles of algebraic geometry}.
\newblock Wiley-Interscience [John Wiley \& Sons], New York, 1978.

\bibitem{GBII}
A.~Grothendieck.
\newblock Le groupe de {B}rauer {II}.
\newblock In {\em Dix expos\'{e}s sur la cohomologie des sch\'{e}mas}.
  North-Holland, Amsterdam, 1968.

\bibitem{par}
N.~Hoffmann.
\newblock Rationality and {P}oincar\'{e} families for vector bundles with extra
  structure on a curve.
\newblock {\em Int. Math. Res. Not. IMRN}, 2007.
\newblock Article ID rnm010, 30 pages.

\bibitem{rationality}
N.~Hoffmann.
\newblock Moduli stacks of vector bundles on curves and the {K}ing-{S}chofield
  rationality proof.
\newblock In {\em Cohomological and geometric approaches to rationality
  problems}, volume 282 of {\em Progr. Math.}, pages 133--148. Birkh\"{a}user
  Boston, Inc., Boston, MA, 2010.

\bibitem{hoobler}
R.~T. Hoobler.
\newblock A cohomological interpretation of {B}rauer groups of rings.
\newblock {\em Pacific J. Math.}, 86(1):89--92, 1980.

\bibitem{hudson}
R.~W. H.~T. Hudson.
\newblock {\em Kummer's quartic surface}.
\newblock Cambridge Mathematical Library. Cambridge University Press,
  Cambridge, 1990.
\newblock With a foreword by W. Barth. Revised reprint of the 1905 original.

\bibitem{KS}
A.~King and A.~Schofield.
\newblock Rationality of moduli of vector bundles on curves.
\newblock {\em Indag. Math. (N.S.)}, 10(4):519--535, 1999.

\bibitem{kummer}
E.~E. {Kummer}.
\newblock {\"Uber die F\"achen vierten Grades mit sechzehn singul\"aren
  Punkten.}
\newblock {\em {Berl. Monatsber.}}, 1864:246--260, 1864.

\bibitem{lange}
H.~Lange.
\newblock Abelian varieties with several principal polarizations.
\newblock {\em Duke Math. J.}, 55(3):617--628, 1987.

\bibitem{milneLEC}
J.~S. Milne.
\newblock Lectures on etale cohomology (v2.21), 2013.
\newblock Available at www.jmilne.org/math/.

\bibitem{mumford}
D.~Mumford.
\newblock {\em Abelian varieties}.
\newblock Tata Institute of Fundamental Research Studies in Mathematics, No. 5.
  Published for the Tata Institute of Fundamental Research, Bombay; Oxford
  University Press, London, 1970.

\bibitem{mumnew}
D.~Mumford and P.~E. Newstead.
\newblock Periods of a moduli space of bundles on curves.
\newblock {\em Amer. J. Math.}, 90:1200--1208, 1968.

\bibitem{naranori}
M.~S. Narasimhan and M.~V. Nori.
\newblock Polarisations on an abelian variety.
\newblock {\em Proc. Indian Acad. Sci. Math. Sci.}, 90(2):125--128, 1981.

\bibitem{nararam2}
M.~S. Narasimhan and S.~Ramanan.
\newblock Moduli of vector bundles on a compact {R}iemann surface.
\newblock {\em Ann. of Math. (2)}, 89:14--51, 1969.

\bibitem{nararam3}
M.~S. Narasimhan and S.~Ramanan.
\newblock Vector bundles on curves.
\newblock In {\em Algebraic {G}eometry ({I}nternat. {C}olloq., {T}{I}{F}{R},
  {B}ombay, 1968)}, pages 335--346. Oxford Univ. Press, London, 1969.

\bibitem{nararam1}
M.~S. Narasimhan and S.~Ramanan.
\newblock Deformations of the moduli space of vector bundles over an algebraic
  curve.
\newblock {\em Ann. Math. (2)}, 101:391--417, 1975.

\bibitem{newstead_odd}
P.~E. Newstead.
\newblock Stable bundles of rank {$2$} and odd degree over a curve of genus
  {$2$}.
\newblock {\em Topology}, 7:205--215, 1968.

\bibitem{newstead}
P.~E. Newstead.
\newblock A note on quadratic complexes.
\newblock {\em J. London Math. Soc. (2)}, 5:748--752, 1972.

\bibitem{newstead2}
P.~E. Newstead.
\newblock Comparison theorems for conic bundles.
\newblock {\em Math. Proc. Cambridge Philos. Soc.}, 90(1):21--31, 1981.

\bibitem{nitsure}
N.~Nitsure.
\newblock Topology of conic bundles - {I}{I}, 1996.
\newblock arXiv:alg-geom/9602017, available at www.arXiv.org.

\bibitem{oguiso}
K.~Oguiso.
\newblock Isomorphic quartic {K}3 surfaces in the view of {C}remona and
  projective transformations.
\newblock {\em Taiwanese J. Math.}, 21:671--688, 2017.

\bibitem{ramanan}
S.~Ramanan.
\newblock The moduli spaces of vector bundles over an algebraic curve.
\newblock {\em Math. Ann.}, 200:69--84, 1973.

\bibitem{saltman}
D.~Saltman.
\newblock {\em Lectures on division algebras}, volume~94 of {\em CBMS Regional
  Conference Series in Mathematics}.
\newblock American Mathematical Society, Providence, RI, 1999.

\bibitem{seshadri}
C.~Seshadri.
\newblock {\em Fibr\'es vectoriels sur les courbes alg\'ebriques}, volume~96 of
  {\em Ast\'erisque}.
\newblock Soci\'et\'e Ma\-th\'e\-ma\-tique de France, Paris, 1982.

\bibitem{spanier}
E.~Spanier.
\newblock The homology of {K}ummer manifolds.
\newblock {\em Proc. Amer. Math. Soc.}, 7:155--160, 1956.

\bibitem{tjurin}
A.~Tjurin.
\newblock Analogues of {T}orelli's theorem for multidimensional vector bundles
  over an arbitrary algebraic curve.
\newblock {\em Izv. Akad. Nauk SSSR Ser. Mat.}, 34:338--365, 1970.

\end{thebibliography}

\end{document}